\numberwithin{equation}{section}
\numberwithin{equation}{section}
\newcommand{\pd}[2]{\frac {\partial #1}{\partial #2}}
\newcommand{\al}{\alpha}
\newcommand{\bb}{\beta}
\newcommand{\la}{\lambda}
\newcommand{\La}{\Lambda}
\newcommand{\oo}{\omega}
\newcommand{\dd}{\delta}
\newcommand{\Na}{\nabla}
\def\ga{\gamma}
\newcommand{\ee}{\epsilon}
\newcommand{\Si}{\Sigma}
\newcommand{\te}{\theta}
\newcommand{\beq}{\begin{equation}}
    \newcommand{\eeq}{\end{equation}}
\newcommand{\beqs}{\begin{eqnarray*}}
    \newcommand{\eeqs}{\end{eqnarray*}}
\newcommand{\beqn}{\begin{eqnarray}}
    \newcommand{\eeqn}{\end{eqnarray}}
\newcommand{\beqa}{\begin{array}}
    \newcommand{\eeqa}{\end{array}}
\def\td{\tilde}
\def\RR{{\mathbb R}}
\def\ri{\rightarrow}
\def\un{\underline}
\def\no{{\nonumber}}
\def\pbp{\sqrt{-1}\partial\bar\partial}
\def\tr{{\rm tr}}
\def\vol{{\rm vol}}
\def\cH{{\mathcal H}}
\def\cK{{\mathcal K}}
\def\ka{{\kappa}}
\def\Rc{{Ric(\oo_g)}}
\newtheorem{prop}{Proposition}[section]
\newtheorem{theo}[prop]{Theorem}
\newtheorem{lem}[prop]{Lemma}
\newcommand{\RomanNumeral}[1]{\uppercase\expandafter{\romannumeral#1}}
\def\ExtendSymbol#1#2#3#4#5{\ext@arrow 0099{\arrowfill@#1#2#3}{#4}{#5}}
\def\ExtendSymbol#1#2#3#4#5{\ext@arrow 0099{\arrowfill@#1#2#3}{#4}{#5}}
\title{Calabi flow with bounded $L^p$ scalar curvature (II)
}
\author{ Haozhao Li \footnote{Supported by NSFC grant No. 12471058,  the CAS Project for Young Scientists
        in Basic Research (YSBR-001), and the Fundamental Research Funds
        for the Central Universities.} \quad  and \quad Linwei Zhang }
\begin{document}
    \bibliographystyle{plain}

    \date{}

    \maketitle

\begin{abstract}
In this paper, we show that on a compact K\"ahler manifold   the Calabi flow can be extended as long as some space-time $L^p$ integrals of the scalar curvature  are bounded.
    \end{abstract}

    \tableofcontents

\section{Introduction}

This paper is the continuation of the study on the extension of Calabi flow in \cite{[LZZ]}. In \cite{[LZZ]}, based on Chen-Cheng's estimates in \cite{[CC3]}, we showed that the Calabi flow can be extended as long as the $L^p$ norm of the scalar curvature is bounded. The estimates in \cite{[LZZ]} are essentially elliptic. In this paper, we want to use the parabolic structure of the Calabi flow equation to study the extension of Calabi flow under some space-time integrals of the scalar curvature as in other second order geometric flows, such as Ricci flow and mean curvature flow etc.

Let $(M^n, g)$ be a compact K\"ahler manifold of complex dimension $n$. To study the constant scalar curvature metrics in a K\"ahler class, E. Calabi in \cite{[Cal1]} introduced the Calabi flow, which is the gradient flow of the Calabi energy. We call a family of K\"ahler metrics $\oo_{\varphi(t)}(t\in [0, T])$ in the same K\"ahler class $[\oo_g]$   a solution of
Calabi flow, if the K\"ahler potential $\varphi(t)$ satisfies the equation
\beq
\pd {\varphi(t)}t=R(\oo_{\varphi(t)})-\un R, \label{eq000}
\eeq where $R(\oo_{\varphi(t)})$ denotes the scalar curvature of the metric $\oo_{\varphi(t)}$ and $\un R$ denotes the average of the scalar curvature.
The Calabi flow is expected to be
an
effective tool to find constant scalar curvature metrics in a K\"ahler class. However,  since the Calabi flow   a fully nonlinear fourth order partial differential equation, it is difficult to study its behavior by  standard parabolic estimates. In this paper, we continue to study the extension problem of Calabi flow under some conditions on the scalar curvature.

There are many literatures on Calabi flow. The long time existence and convergence of Calabi flow on Riemann surfaces is completely solved by Chrusciel \cite{[Chru]},    Chen
\cite{[Chen]} and   Struwe \cite{[Stru]} independently by  using different methods. In  \cite{[ChenHe1]},  Chen-He  showed  the short time existence and stability results of Calabi flow in general K\"ahler manifolds of higher dimensions. In a series of papers \cite{[ChenHe3]}\cite{[ChenHe2]}\cite{[He4]}\cite{[He5]}, Chen and He studied the long time existence
and convergence under some curvature conditions. Moreover,  Tosatti-Weinkove  \cite{[TW]} proved the long time existence and convergence under the assumption that  the Calabi energy is small. Szekelyhidi in \cite{[Sz2]} studied the Calabi flow on ruled surfaces, and in \cite{[Sz]} studied the Calabi flow under the assumption
that the curvature tensor is uniformly bounded and the $K$-energy is proper.   Streets
\cite{[St1]}\cite{[St2]}  showed the long time existence of a weak solution to the
Calabi flow and Berman-Darvas-Lu \cite{[BDL]} showed the convergence of weak Calabi flow on general K\"ahler manifolds.

A conjecture of X. X. Chen in \cite{[Chen2]} says that the Calabi flow always exists for all time for any initial K\"ahler metrics.
Chen-He's
result in \cite{[ChenHe1]} showed the extension result of Calabi flow under the assumption that the Ricci curvature stays bounded, and  Huang in \cite{[Huang]} proved the extension results of the Calabi
flow on toric manifolds.
 In \cite{[LZ]} Li-Zheng showed the long time existence under the assumptions on the lower boundedness of Ricci curvature, the properness of the $K$-energy, and the $L^p(p>n)$ bound of scalar curvature.
 In \cite{[LWZ]}, Li-Wang-Zheng used the  ideas from Ricci flow in
\cite{[Wang3]} and \cite{[Wang2]} to study the convergence of Calabi flow. A breakthrough was made by Chen-Cheng in \cite{[CC3]} and they showed that the Calabi flow always exists as long as the scalar curvature is bounded.\\

In the previous paper \cite{[LZZ]}, Li-Zhang-Zheng proved that the Calabi flow can be extended as long as the $L^p$ scalar curvature is bounded.   In this paper, based on Chen-Cheng's estimates in \cite{[CC3]} we show that Calabi flow can be extended as long as some space-time $L^p$ integrals of the scalar curvature  are bounded. The  main theorem in this paper is the following result.

\begin{theo}\label{theo:1.1} Let $(M, \oo_g)$ be a compact K\"ahler manifold of complex dimension $n\geq 2$, and
  $\{\varphi(t), t\in [0, T)\}$ the solution to the Calabi flow (\ref{eq000}) with $T<\infty.$
 If  the scalar curvature satisfies
\beq
\int_0^T\int_M\;\Big((\Delta_{\varphi}R)^{p+1}+|R|^{2p}\Big)\,\oo_{\varphi}^ndt\leq C, \label{eq:theo1}
\eeq   for $p>n$,  the Calabi flow  can be extended past time $T$.

 \end{theo}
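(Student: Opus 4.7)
The plan is to reduce the theorem to the main result of \cite{[LZZ]} by showing that the spacetime hypothesis~(\ref{eq:theo1}) upgrades to a uniform-in-time bound $\sup_{t\in[0,T)}\|R(\omega_{\varphi(t)})\|_{L^q(\omega_{\varphi(t)})}<\infty$ for some $q>n$. Once this uniform $L^q$ bound is established, the $L^p$-extension criterion of \cite{[LZZ]} applies directly and yields the extension of the Calabi flow past $T$.

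\textbf{Main computation.} To obtain the uniform $L^q$ bound, I would differentiate $\Phi_q(t):=\int_M |R|^q\,\omega_\varphi^n$ along the flow. Using $\partial_t\omega_\varphi^n=(\Delta_\varphi R)\omega_\varphi^n$ together with the parabolic evolution $\partial_t R=-\Delta^2 R+\mathcal{E}(R,\mathrm{Ric})$ of the scalar curvature under Calabi flow, and integrating the fourth-order term by parts twice, one obtains an identity of the schematic form
\[
\frac{d}{dt}\Phi_q(t)+q(q-1)\int_M |R|^{q-2}(\Delta R)^2\,\omega_\varphi^n = \text{(cross term)}+\text{(curvature error)},
\]
in which the cross term $\sim \int_M R|R|^{q-4}|\nabla R|^2\Delta R\,\omega_\varphi^n$ is absorbed into the good negative left-hand side by Cauchy--Schwarz and further integration by parts. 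Integrating from $0$ to $T$ and applying H\"older's inequality in spacetime gives
\[
\int_0^T\!\int_M |R|^{q-2}(\Delta R)^2\omega_\varphi^n\, dt \le \Big(\int_0^T\!\int_M (\Delta R)^{p+1}\omega_\varphi^n dt\Big)^{\frac{2}{p+1}}\Big(\int_0^T\!\int_M |R|^{\frac{(q-2)(p+1)}{p-1}}\omega_\varphi^n dt\Big)^{\frac{p-1}{p+1}},
\]
so choosing $q$ with $(q-2)(p+1)/(p-1)\le 2p$---possible with $q>n$ since $2(p^2+1)/(p+1)>n$ for $p>n\ge 2$---makes the right-hand side finite by~(\ref{eq:theo1}). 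The polynomial-in-$R$ contributions arising from the curvature error are treated analogously using the spacetime $L^{2p}$ bound on $R$.

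\textbf{Main obstacle.} The delicate step will be estimating the curvature error $\mathcal{E}(R,\mathrm{Ric})$ in the evolution of $R$, which couples $\nabla^2 R$ to the Ricci tensor of the evolving metric and is not directly controlled by the hypothesis. To handle this I expect to invoke the Chen--Cheng \emph{a priori} estimates of \cite{[CC3]}---as exploited in \cite{[LZZ]}---in order to dominate Ricci-type quantities by integral norms of $R$ already controlled by~(\ref{eq:theo1}). A secondary technical point is to keep the Sobolev and H\"older constants uniform up to $T$; this is aided by the preservation of the K\"ahler class (hence of the total volume) under the Calabi flow, but requires the a priori metric equivalence with a fixed background reference metric to persist to time $T$.
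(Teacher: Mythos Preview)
Your proposal has a genuine gap: the circularity in handling the Ricci error term. Under the Calabi flow one has $\partial_t R=-\Delta_\varphi^2 R-R^{i\bar j}_{\varphi}\nabla_i\nabla_{\bar j}R$, so the error $\mathcal{E}$ couples $\nabla^2 R$ to the full Ricci tensor $\mathrm{Ric}(\omega_\varphi)$ of the \emph{evolving} metric, not to $R$ alone. You propose to control this via Chen--Cheng / \cite{[LZZ]}, but those estimates take a uniform-in-time $L^q$ bound on $R$ as \emph{input} and produce metric equivalence and $W^{2,p}$ bounds on $F$ as \emph{output} (from which $L^p$ bounds on $\mathrm{Ric}(\omega_\varphi)=-i\partial\bar\partial F+\mathrm{Ric}(\omega_g)$ would follow). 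You cannot invoke the output to establish the input. Even after integrating by parts and using the contracted Bianchi identity, the term $\int_M |R|^{q-2}\,R^{i\bar j}_{\varphi}\nabla_i R\,\nabla_{\bar j}R\,\omega_\varphi^n$ survives and still contains $\mathrm{Ric}(\omega_\varphi)$; the hypothesis on $\Delta_\varphi R$ gives only trace information and does not control the full Hessian $\nabla^2 R$ needed here. Your closing remark that metric equivalence must ``persist to time $T$'' is the same circularity restated: that equivalence is precisely the conclusion one is after.

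The paper avoids this loop by never trying to upgrade~(\ref{eq:theo1}) to a uniform $L^q(R)$ bound. It works directly with the potential $\varphi$ and the volume ratio $F$: first it bounds the entropy quantity $\int_0^T\!\int_M\Phi(F)\,\omega_\varphi^n\,dt$ using the $K$-energy and $d_1$-distance along the flow; then, via auxiliary parabolic Monge--Amp\`ere equations (Theorem~\ref{theo:3.1}) and parabolic maximum principles, it obtains $\|\varphi\|_\infty$ and $\|F\|_\infty$; finally it runs parabolic Moser iteration (using the Sobolev inequality of \cite{[GPSS]}/\cite{[GT1]}, whose constant depends only on $\|F\|_\infty$) on the Chen--Cheng test functions to bound $n+\Delta_g\varphi$, $|\nabla F|_\varphi$, and $|\nabla\varphi|_g$ in $L^\infty$. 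Throughout, only $\mathrm{tr}_\varphi\mathrm{Ric}(\omega_g)$ appears---never the Ricci tensor of $\omega_\varphi$---and this background quantity is controlled once $n+\Delta_g\varphi$ is.
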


In Theorem \ref{theo:1.1}, we need to assume a technical condition on the space-time $L^p$ bound of $\Delta_\varphi R$, which seems inevitable if we calculate the time derivative of the evolving metrics. It is possible that the condition on $\Delta_\varphi R$ can be replaced by some other geometric conditions, and we will discuss this problem in future papers.

Theorem \ref{theo:1.1} is similar to the results in other geometric flows such as Ricci flow and mean curvature flow. For Ricci flow, B. Wang \cite{[Wang2]} proved that on a compact Riemannian manifold of real dimension $m$ the Ricci flow can be extended if
\beq
\int_0^T\int_M\;|Rm|^{p}\,\oo_{\varphi}^ndt\leq C, \quad p\geq \frac {m+2}{2}. \no
\eeq
G. Di Matteo \cite{[Matt]} extends Wang's result to some mixed integral norms of the curvature tensor. For mean curvature flow, Xu-Ye-Zhao \cite{[XYZ]} proved that the mean curvature flow $\Si^m_t\subset \RR^{m+1}$ can be extended if
\beq
\int_0^T\int_M\;|A|^{p}\,d\mu dt\leq C, \quad p\geq m+2. \no
\eeq
Le-Seum \cite{[LS0]} also showed some extension results of mean curvature flow under some mixed
integral norms of the second fundamental form. Since Ricci flow and mean curvature flow are second-order geometric flows, the usual parabolic Moser iteration argument applies once the Sobolev inequality holds. However, since Calabi flow is a fourth-order flow, we need to overcome  new difficulties.\\

We outline the proof of Theorem \ref{theo:1.1}. The proof is divided into several steps:
\begin{enumerate}
  \item[(1).] The $C^0$ estimates of $F$ and $\varphi$.  Lu-Seyyedali \cite{[LS]} proved the $C^0$ estimates of $F$ and $\varphi$ under the assumption that the $L^p(p>n)$ norm of the scalar curvature is bounded. In the proof of Theorem \ref{theo:1.1} we use the parabolic version of Lu-Seyyedali's argument to show that $\|F\|_{C^0}$ and $\|\varphi\|_{C^0}$ are bounded under the condition (\ref{eq:theo1}) of Theorem \ref{theo:1.1}. Recall that using the method of Guo-Phong-Tong \cite{[GPT]}, Chen-Cheng in \cite{[CC1]} proved the $L^{\infty}$ estimate of the parabolic complex Monge-Ampere flow:
\beq
-\pd {\psi}t(\oo_g+\pbp \psi)^n=e^G\oo_{g}^n.
\eeq
Based on Chen-Cheng's estimates, we show that $\|F\|_{L^{1+\dd}(M\times [0, T), \oo_{\varphi})}$ is uniformly bounded along the flow. This together with the assumption of Theorem \ref{theo:1.1} implies that $\|\varphi\|_{C^0}$ is bounded along the Calabi flow. Thus using the parabolic maximum principles we show that $\|F\|_{C^0}$ is bounded.

\item[(2). ] Higher order estimates of $F$ and $\varphi.$ We follow the argument of Chen-Cheng \cite{[CC3]}, Li-Zhang-Zheng \cite{[LZZ]} and the parabolic Moser iteration to show that the space-time quantities
\beq
\int_0^T\int_M\;(n+\Delta_g\varphi)^q\,\oo_g^ndt,\quad  \int_0^T\int_M\;|\Na F|_{\varphi}^{2\ka}\,\oo_{\varphi}^ndt
\eeq are bounded for some $\ka>2n$ and any $q\geq 1$. Using these estimates and the
parabolic Moser iteration argument, we show that $\|n+\Delta_g\varphi\|_{C^0}$ is bounded. Thus, using similar argument as in Chen-Cheng \cite{[CC3]} the higher order estimates of $F$ and $\varphi$ can be obtained. The argument is based on the Sobolev inequality of Guo-Phong-Song-Sturm \cite{[GPSS]} or Guedj-T\^o \cite{[GT1]}.
\end{enumerate}

 The organization of this paper is as follows. In Sec. 2 we recall some basic notations and show the parabolic Sobolev inequality  on K\"ahler manifolds. In Sec. 3 we first show the $L^{\infty}$ norm of $F$ and $\varphi$, and then we show the  space-time $L^p$ estimates of $n+\Delta_{g}\varphi$ and $|\Na F|_{\varphi}$, which implies the $L^{\infty}$ norm of $n+\Delta_{g}\varphi$. Finally, in Sec. 4 we show the higher-order estimates along the Calabi flow.

\section{Preliminary results}
In this section, we recall some basic notations and results  on K\"ahler manifolds.
Let $(M, \oo_g)$ be a compact K\"ahler manifold with complex dimension $n$. We define the space of K\"ahler potentials
\beq
\cH(\oo_g)=\{\varphi\in C^{\infty}(M, \RR)\;|\;\oo_g+\pbp\varphi>0\},
\eeq
and we define the subset $\cH_0$ of $\cH(\oo_g)$ by
\beqn
\cH_0:=\{\varphi\in\cH(\oo_g)\;|\; I_{\oo_g}(\varphi)=0\},
\eeqn
where the functional $I_{\oo_g}$ is defined by
\beqs
I_{\oo_g}(\varphi)=\frac{1}{(n+1)!}\sum_{k=0}^{n}\int_M\varphi \oo^k\wedge\oo_\varphi^{n-k}.
\eeqs
It is clear that for any path $\varphi(t)\in \cH$, we have
\beq
\frac d{dt}I_{\oo_g}(\varphi(t))=\frac 1{n!}\int_M\; \pd {\varphi(t)}t\oo_{\varphi(t)}^n. \label{eq:I}
\eeq
The $K$-energy is defined by
\beq
\cK(\varphi)=-\int_0^1\,\int_M\;\pd {\varphi_t}t(R(\oo_{\varphi_t})-\un R)\,\frac {\oo_{\varphi_t}^n}{n!}.
\eeq Note that along the Calabi flow we have
\beq
\frac d{dt}\cK(\varphi(t))=-\int_M\;(R(\oo_{\varphi(t)})-\un R)^2\,\frac {\oo_{\varphi_t}^n}{n!}\leq 0.
\eeq Therefore, the $K$-energy is non-increasing along the Calabi flow.
It is known that the $K$-energy can be written as
\beq
\cK(\varphi)=\int_M\; \log \frac {\oo_{\varphi}^n}{\oo_g^n}\,\frac {\oo_{\varphi}^n}{n!}+J_{-Ric(\oo_g)}(\varphi),
\eeq where for a $(1, 1)$ form $\chi$, we define
 $$J_{\chi}(\varphi)=\int_0^1\,\int_M\;\pd {\varphi_t}t\Big(\chi\wedge \frac {\oo_{\varphi_t}^{n-1}}{(n-1)!}-\un \chi \frac {\oo_{\varphi_t}^n}{n!}\Big)\oo_{\varphi_t}^n\wedge dt,$$ where $\varphi_t\in \cH$ is a path connecting $0$ and $\varphi.$ Here
\beq
\un \chi=\frac {\int_M\;\chi\wedge \frac {\oo_g^{n-1}}{(n-1)!}}{\int_M\;\frac {\oo_g^n}{n!}}.
\eeq
For any function $\varphi\in \cH(\oo_g)$, we define the function $F$ by
\beq
(\oo_g+\pbp\varphi)^n =e^F\oo_g^n.
\eeq
Let $\varphi(x, t)$ be a family of K\"ahler potentials.  We denote by $R$ the scalar curvature of the metric $\oo_{\varphi(x, t)}$, and $R_g$ to denote the scalar curvature of the metric $\oo_g$. For simplicity, we write
\beqn
\|f\|_s&=&\Big(\int_0^T\,\int_M\;|f(x, t)|^s\,\oo_{\varphi(x, t)}^n\,dt\Big)^\frac 1s,\no\\
\|f\|_{s, t}&=&\Big(\int_M\;|f(x, t)|^{s}\,\oo_{\varphi(x, t)}^n\Big)^{\frac{1}{s}}.\no
\eeqn
We denote by $|\Na f|_{\varphi}$ (resp. $|\Na f|_g$) the norm of the gradient of $f$ with respect to the metric $\oo_{\varphi}$ (resp. $\oo_g$). Moreover, we denote by $\Delta_{\varphi}$ (resp. $\Delta_g$) the Laplace operator with respect to the metric $\oo_{\varphi}$ (resp. $\oo_g$).

Now we recall the following interpolation inequality.

\begin{lem}\label{lem:2.1}(cf. \cite[Equations (7.9) and (7.10)]{GT]}, \cite[Lemma 2.1]{[LZZ]}) If $0<p<r<q$,   for any $\ee>0$ we have
\beq
\|f\|_{r, t}\leq\|f\|_{q, t}^{\te}\|f\|_{p, t}^{1-\te},
\eeq where $\te=\frac {(r-p)q}{(q-p)r}\in (0, 1)$.

\end{lem}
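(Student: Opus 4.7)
The statement is the classical Hölder interpolation for $L^p$-norms applied slicewise at each fixed time $t$, so my plan is to reduce it directly to Hölder's inequality after checking the exponent bookkeeping.

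First, I would rewrite the target exponent $\theta$ in a form suited to Hölder. A short algebraic check gives
\[
\frac{\theta r}{q}=\frac{r-p}{q-p},\qquad \frac{(1-\theta)r}{p}=\frac{q-r}{q-p},
\]
and these two fractions add to $1$. The hypothesis $0<p<r<q$ also guarantees $\theta\in(0,1)$, so both exponents $q/(\theta r)$ and $p/((1-\theta)r)$ are legitimate conjugate Hölder exponents.

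Next, I would split the integrand as
\[
|f(x,t)|^{r}=|f(x,t)|^{\theta r}\cdot|f(x,t)|^{(1-\theta)r}
\]
and apply Hölder's inequality with respect to the measure $\omega_{\varphi(x,t)}^{n}$ using the conjugate pair $\bigl(q/(\theta r),\,p/((1-\theta)r)\bigr)$. This yields
\[
\int_{M}|f|^{r}\,\omega_{\varphi}^{n}\leq\Bigl(\int_{M}|f|^{q}\,\omega_{\varphi}^{n}\Bigr)^{\!\theta r/q}\Bigl(\int_{M}|f|^{p}\,\omega_{\varphi}^{n}\Bigr)^{\!(1-\theta)r/p}.
\]
Raising both sides to the power $1/r$ and recognising the right-hand side as $\|f\|_{q,t}^{\theta r/q\cdot r/1}$... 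I mean $\|f\|_{q,t}^{\theta}\|f\|_{p,t}^{1-\theta}$ gives the claim.

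There is essentially no obstacle here beyond verifying the exponent identities; the lemma is purely pointwise-in-$t$ Hölder interpolation and uses no Kähler geometry, no parabolicity, and no curvature hypotheses. The only mildly subtle point worth double-checking is that the parenthetical statement ``for any $\varepsilon>0$'' in the excerpt appears to be vestigial, since $\varepsilon$ does not occur in the displayed inequality; if an $\varepsilon$-version were actually needed later, one would combine the interpolation above with Young's inequality $ab\leq \varepsilon a^{1/\theta}+C_{\varepsilon}b^{1/(1-\theta)}$, but that is not part of the statement as given.
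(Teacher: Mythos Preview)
Your argument is correct and is exactly the standard H\"older-interpolation proof that the paper defers to the cited references (Gilbarg--Trudinger (7.9)--(7.10) and \cite{[LZZ]}); the paper itself supplies no proof beyond those citations. Your observation that the phrase ``for any $\varepsilon>0$'' is vestigial is also accurate: no $\varepsilon$ appears in the inequality, and the Young-inequality variant you sketch is indeed what is used downstream (e.g.\ in the iteration steps around (\ref{eq:0.8}) and (\ref{eq:1.4})) but is not part of this lemma as stated.
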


        Following Guo-Phong-Song-Sturm \cite{[GPSS]} or Guedj-T\^o \cite{[GT1]}, the Sobolev
constant of the metric $\oo_{\varphi}$ is bounded under some conditions.

    \begin{theo}(cf. \cite[Theorem 2.1]{[GPSS]},  \cite[Theorem 2.6)]{[GT1]} \label{theo:2.1}
            For any $\gamma\in(1,\frac {n}{n-1})$ and $u\in W^{1, 2}(M, \oo_{\varphi})$, we have the Sobolev inequality with respect to the metric $\oo_\varphi$
            \beqn
            \Big(\int_M\,|u|^{2\gamma}\;\oo_\varphi^n\Big)^{\frac 1{\gamma}}\leq C(n,\oo_g,\gamma,\|F\|_\infty)\int_M \,(|u|^2+|\Na u|_{\varphi}^2)\;\oo_\varphi^n.
            \eeqn
        \end{theo}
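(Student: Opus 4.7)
The plan is to follow the Guo--Phong--Song--Sturm / Guedj--T\^o strategy. The main obstruction to a direct argument is that a bound on $\|F\|_\infty$ controls only the volume form $\oo_\varphi^n = e^F \oo_g^n$, not the individual eigenvalues of $\oo_\varphi$ relative to $\oo_g$; in particular $\oo_\varphi$ need not have bounded curvature or bounded geometry, so the classical heat-kernel, Nash, or isoperimetric proofs of the Sobolev inequality cannot be transplanted from $(M,\oo_g)$. The idea is instead to reduce the estimate to an $L^\infty$ bound for an auxiliary complex Monge--Amp\`ere equation.

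By homogeneity and smooth approximation we may assume $u \geq 0$ with $\int_M (|u|^2 + |\Na u|_\varphi^2)\,\oo_\varphi^n = 1$, and set $S := \int_M |u|^{2\gamma}\,\oo_\varphi^n$, so that the goal becomes $S \leq C$. The first step is to solve the Monge--Amp\`ere equation
\[
(\oo_g + \pbp \rho)^n \;=\; c\,\big(|u|^{2\gamma} + \eps\big)\, e^F\, \oo_g^n, \qquad \sup_M \rho = 0,
\]
with $c$ the normalizing constant equating total $\oo_g^n$-integrals of both sides. The crucial input is the entropy-type $L^\infty$ estimate of \cite{[GPSS]} (with its variant in \cite{[GT1]}): because the exponent $\gamma < n/(n-1)$ provides just enough integrability on the right-hand side, one obtains $\|\rho\|_\infty \leq C(n, \oo_g, \gamma, \|F\|_\infty)$ with a constant that does not degenerate as $\eps \to 0$ nor as $S$ varies.

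Given such a $\rho$, the Sobolev inequality follows by integration by parts. The idea is to expand $(\oo_g + \pbp \rho)^n = \sum_k \binom{n}{k} \oo_g^{n-k} \wedge (\pbp \rho)^k$, pair the Monge--Amp\`ere equation with $1$, and integrate each $\pbp \rho$ factor against the remaining form so as to land one derivative on $\rho$ and transfer the other onto a $|u|^2$ type term. The uniform bound $\|\rho\|_\infty \leq C$ together with Cauchy--Schwarz against the normalization $\int_M (|u|^2 + |\Na u|_\varphi^2)\,\oo_\varphi^n = 1$ then closes the estimate and yields $S \leq C$; undoing the homogeneity gives the inequality as stated.

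The main obstacle is located entirely in the first step: producing an $L^\infty$ bound for the auxiliary potential $\rho$ whose dependence on the data reduces to the four quantities $n, \oo_g, \gamma, \|F\|_\infty$ and is in particular independent of $S$ and $\eps$. This is exactly where the recent pluripotential/Alexandrov--Bakelman--Pucci machinery of \cite{[GPSS]} and \cite{[GT1]} does the real work; after that, the integration by parts is standard bookkeeping with H\"older's inequality.
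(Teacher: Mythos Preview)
The paper does not give its own proof of this statement: Theorem~\ref{theo:2.1} is quoted verbatim from \cite{[GPSS]} and \cite{[GT1]} and used as a black box, so there is no argument in the paper to compare your sketch against.

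That said, your outline has a genuine gap at the point you yourself flag as ``the main obstacle.'' You assert that the $L^\infty$ bound for the auxiliary potential $\rho$ solving $(\oo_g+\pbp\rho)^n=c(|u|^{2\gamma}+\eps)e^F\oo_g^n$ depends only on $n,\oo_g,\gamma,\|F\|_\infty$ and is independent of $S=\int_M|u|^{2\gamma}\oo_\varphi^n$. But the known $L^\infty$ estimates for complex Monge--Amp\`ere (Ko\l odziej, Guo--Phong--Tong, or the entropy versions) require control of an $L^p$ norm with $p>1$, or an Orlicz/entropy norm, of the right-hand side density; here that density is essentially $c|u|^{2\gamma}e^F$, and any such higher norm involves $\int_M|u|^{2\gamma p}\oo_\varphi^n$, which is precisely the kind of quantity you are trying to bound. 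The restriction $\gamma<\frac{n}{n-1}$ does not by itself break this circularity. The proofs in \cite{[GPSS]} and \cite{[GT1]} avoid this by a different route: they first establish uniform $L^q$ bounds on the Green's function of $\Delta_\varphi$ (this is where the auxiliary Monge--Amp\`ere machinery enters), and then deduce the Sobolev inequality from the Green's function representation, rather than by a single Monge--Amp\`ere equation with $|u|^{2\gamma}$ on the right. Your final integration-by-parts step is also too vague as written: pairing the Monge--Amp\`ere equation with the constant function $1$ simply recovers the normalization $c(S+\eps V_\varphi)=V_g$ and gives no new information, so a more careful choice of test function and a sharper accounting of the mixed terms $\oo_g^{n-k}\wedge(\pbp\rho)^k$ against $\oo_\varphi$ would be needed.
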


It is known that the following parabolic Sobolev inequality follows from Theorem \ref{theo:2.1}, and we collect the proof for the readers' convenience.

    \begin{lem}\label{lem:interF}
        For any $0<\kappa<2<\beta<\gamma<\frac{2n}{n-1}$ and $u\in W^{1, 2}(M\times [0, T), \oo_{\varphi})$, we have
        \beq
        \int_{0}^{T}\;dt\int_M|u|^\beta\;\oo_{\varphi}^n\leq C\sup_{t\in [0,T)}\|u\|_{\kappa,t}^{(1-\frac 2\gamma)\kappa}\int_{0}^{T}\;dt\int_M\;\Big(|\nabla u|_\varphi^2+|u|^2\Big)\;\oo_{\varphi}^n.\label{eq:0.1}
        \eeq
        where $C$ depends on $\oo_g, n,\|F\|_\infty$ and $\gamma$. Moreover, the constants  $\theta\in(0,1)$, $\kappa, \beta, \gamma>0$ satisfy the conditions
        \beq \frac{1}{\beta}=\frac{\theta}{\kappa}+\frac{1-\theta}{\gamma},\quad\quad (1-\theta)\beta=2.\label{eq:0.0}\eeq
    \end{lem}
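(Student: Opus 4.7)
The plan is to combine the elliptic Sobolev inequality (Theorem \ref{theo:2.1}) pointwise in $t$ with the Hölder interpolation (Lemma \ref{lem:2.1}), and then integrate in time, using the constraint $(1-\theta)\beta = 2$ to arrange that the ``Sobolev factor'' appears linearly and can be integrated.

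First, I would fix $t \in [0,T)$ and apply Lemma \ref{lem:2.1} with the triple $\kappa < \beta < \gamma$: since $\frac{1}{\beta} = \frac{\theta}{\kappa} + \frac{1-\theta}{\gamma}$, the interpolation yields
\beq
\|u\|_{\beta,t} \leq \|u\|_{\kappa,t}^{\theta}\,\|u\|_{\gamma,t}^{1-\theta}. \no
\eeq
Raising to the power $\beta$ and using the second constraint $(1-\theta)\beta = 2$, this becomes
\beq
\int_M |u|^\beta\,\oo_\varphi^n \leq \|u\|_{\kappa,t}^{\theta\beta}\,\|u\|_{\gamma,t}^2. \no
\eeq
Next, because $\gamma \in (2, \tfrac{2n}{n-1})$, the number $\gamma' := \gamma/2$ lies in $(1, \tfrac{n}{n-1})$, so Theorem \ref{theo:2.1} applied with exponent $\gamma'$ gives
\beq
\|u\|_{\gamma,t}^2 = \Big(\int_M |u|^{2\gamma'}\,\oo_\varphi^n\Big)^{1/\gamma'} \leq C(n,\oo_g,\gamma,\|F\|_\infty) \int_M \big(|u|^2 + |\nabla u|_\varphi^2\big)\,\oo_\varphi^n. \no
\eeq

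Combining these two inequalities and bounding $\|u\|_{\kappa,t}^{\theta\beta}$ by its supremum over $t$, I would then integrate in $t$ to obtain
\beq
\int_0^T dt \int_M |u|^\beta\,\oo_\varphi^n \leq C\,\sup_{t\in[0,T)}\|u\|_{\kappa,t}^{\theta\beta}\int_0^T dt \int_M \big(|\nabla u|_\varphi^2 + |u|^2\big)\,\oo_\varphi^n. \no
\eeq
It remains to check the exponent. From $(1-\theta)\beta = 2$ and $\frac{1}{\beta} = \frac{\theta}{\kappa} + \frac{1-\theta}{\gamma}$ one solves $\theta = \frac{\kappa(\gamma-2)}{\beta\gamma}$, so $\theta\beta = \kappa\big(1 - \frac{2}{\gamma}\big)$, which matches the exponent in (\ref{eq:0.1}).

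This proof is essentially a bookkeeping exercise: there is no serious obstacle, since Theorem \ref{theo:2.1} and Lemma \ref{lem:2.1} are already in place. The only point to be careful about is the admissibility of the exponents, namely that the range $0 < \kappa < 2 < \beta < \gamma < \frac{2n}{n-1}$ together with the two constraints in (\ref{eq:0.0}) is consistent and produces a valid $\theta \in (0,1)$, which follows from the elementary computation above.
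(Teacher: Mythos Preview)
Your proposal is correct and follows essentially the same approach as the paper: interpolate via Lemma~\ref{lem:2.1} at each fixed $t$, raise to the $\beta$-th power, use $(1-\theta)\beta=2$ so that the $\gamma$-norm appears squared, apply Theorem~\ref{theo:2.1}, and then integrate in $t$ after pulling out the supremum. Your explicit verification that $\gamma'=\gamma/2\in(1,\tfrac{n}{n-1})$ and that $\theta\beta=\kappa(1-\tfrac{2}{\gamma})$ makes the argument slightly more transparent than the paper's presentation, but the substance is identical.
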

    \begin{proof} Let $\theta$, $\kappa, \beta, \gamma>0$ be the constants satisfying (\ref{eq:0.0}).
        By Lemma \ref{lem:2.1}, for any $t\in[0,T)$, we have
        \beq
        \|u\|_{\beta,t}\leq\|u\|_{\kappa,t}^{\theta}\|u\|_{\gamma,t}^{1-\theta}.\no
        \eeq
        Now taking $\beta$-power and integrating with respect to $t$, we get
        \beqn
        \int_{0}^{T}\,dt\int_M|u|^{\beta}\,\oo_\varphi^n&\leq& \sup_{[0,T)}\|u\|_{\kappa,t}^{\theta\beta}
\int_{0}^{T}\|u\|_{\gamma,t}^{(1-\theta)\beta}\,dt\no\\
&=&\sup_{[0,T)}\|u\|_{\kappa,t}^{\theta\beta}
\int_{0}^{T}\|u\|_{\gamma,t}^{2}\,dt\label{eq:2.7}
        \eeqn
        By Theorem \ref{theo:2.1}, we have $$\|u\|_{\gamma,t}^2\leq C(\oo_g, n,\gamma,\|F\|_\infty)\int_M\Big(|\nabla u|^2+|u|^2\Big)\oo_\varphi^n.$$ Substituting this result into (\ref{eq:2.7}) and using the assumption (\ref{eq:0.0}), we have the inequality (\ref{eq:0.1}). The lemma is proved.
    \end{proof}

\textbf{Notations. } Throughout the paper, we always assume that $n, p, \ga,  \ka, \bb$ and $ \te$   satisfy the following conditions:
 \begin{enumerate}
   \item[(1).] The complex dimension $n$ of $M$ satisfies $n\geq 2$, and we assume $p>n;$
   \item[(2).] $\ga\in \Big(2, \frac {2n}{n-1}\Big)$ and we choose $\ga$  close to $\frac {2n}{n-1}$;
   \item[(3).] $\ka\in (0, 2)$ is small;
   \item[(4).] We define $\bb=2+\Big(1-\frac 2{\ga}\Big)\ka$. We can check that $\bb$ satisfies $2<\bb<\ga.;$
       \item[(5).]  $\te\in (0, 1)$ satisfies $(1-\te)\bb=2.$
 \end{enumerate}
We can check that the above assumptions imply that the constants $  \gamma, \kappa,\beta$ and $\te$ satisfy the conditions (\ref{eq:0.0}).

\section{Estimates }
    \subsection{The $L^\infty$ estimates}

    In this subsection, we use the parabolic version of Lu-Seyyedali \cite{[LS]} to  show that $\|\varphi\|_{\infty}$ and $\|F\|_{\infty}$ are bounded along the Calabi flow. To simplify the notations, we define the function
     $\Phi(s)=\sqrt{1+s^2}$ and we introduce  $Q_F$, $A_{R,p}$ and $B_{R,p}$ as follows:
\beqs
Q_F&=&\Big(\int_{0}^{T}dt\int_M\Phi(F)\,\oo_\varphi^n\Big)^\frac 1n,\\
A_{R,p}&=&\Big(\int_{0}^{T}dt\int_M\Phi(R)^{p}\,\oo_{\varphi}^n\Big)^\frac 1n,\\
B_{R,p}&=&\Big(\int_{0}^{T}dt\int_M\Phi(\Delta_\varphi R)^{p}\,\oo_{\varphi}^n\Big)^\frac 1n.
\eeqs

The main result of this subsection is the following theorem.

    \begin{theo}\label{theo:main1} Let $\varphi(x, t)(t\in [0, T))$ be the solution of Calabi flow (\ref{eq000}) with $T<\infty.$
     If $A_{R,p_1}$ and $B_{R,p_2}$ are bounded with $p_1> n+1$ and $p_2> n+1$, and $Q_F$ is also bounded. Then we have
      \beq
   \|\varphi\|_{L^{\infty}(M\times [0, T)) }+ \|F\|_{L^{\infty}(M\times [0, T))}\leq C(n,\oo_g,Q_F, A_{R,p_1},B_{R,p_2}, \varphi(0), T). \label{eq:0.5}
    \eeq
\end{theo}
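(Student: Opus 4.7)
The plan is to parabolicize the elliptic $L^\infty$ argument of Lu--Seyyedali \cite{[LS]}, which in turn is built on the Guo--Phong--Tong \cite{[GPT]} / Chen--Cheng \cite{[CC1]} Monge--Amp\`ere $L^\infty$ machinery. The strategy splits naturally into three stages: (i) bootstrap the spacetime $L^1$-type control $Q_F$ on $F$ to an $L^{1+\dd}$-type control; (ii) feed this into the parabolic complex Monge--Amp\`ere $L^\infty$ estimate to control $\|\varphi\|_\infty$; (iii) deduce $\|F\|_\infty$ by a parabolic maximum-principle / energy argument on the evolution of $F$.

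The starting point is the identity
\beq
\p_t F=\Delta_\varphi(\p_t\varphi)=\Delta_\varphi R,\no
\eeq
obtained by differentiating $\oo_\varphi^n=e^F\oo_g^n$ in $t$ and using $\p_t\varphi=R-\un R$. Thus $B_{R,p_2}$ with $p_2>n+1$ directly controls $\p_t F$ in spacetime $L^{p_2}$. Integrating in time gives
\beq
F(x,t)=F(x,0)+\int_0^t\Delta_\varphi R(x,s)\,ds,\no
\eeq
and a combination of H\"older in time with the spacetime bound on $\Delta_\varphi R$, the initial bound $\|F(\cdot,0)\|_\infty\leq C$, and the $L^1$-type input $Q_F$ should upgrade the integrability of $F$ to spacetime $L^{1+\dd}$ for some small $\dd>0$ depending only on $n$ and $p_2$.

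For the $C^0$ bound on $\varphi$, with $F$ now in $L^{1+\dd}$ and $R$ in $L^{p_1}$ (via $A_{R,p_1}$, $p_1>n+1$)---so that $\p_t\varphi=R-\un R$ enjoys the same $L^{p_1}$ control---one applies the parabolic $L^\infty$ estimate of Chen--Cheng \cite{[CC1]}. The Monge--Amp\`ere equation $\oo_\varphi^n=e^F\oo_g^n$ supplies the required structural form while the Calabi equation identifies the potential's time derivative with the controlled quantity $R-\un R$; together with the initial datum $\varphi(0)$ this yields the desired bound on $\|\varphi\|_{L^\infty(M\times[0,T))}$. Then, with $\|\varphi\|_\infty$ in hand, the volume forms $\oo_g^n$ and $\oo_\varphi^n$ are comparable up to $e^{\|F\|_\infty}$ factors on level sets, so a parabolic Moser iteration on the fourth-order equation
\beq
\p_t F+\Delta_\varphi^2 F=\Delta_\varphi\tr_\varphi\Rc,\no
\eeq
tested against powers of $F_{\pm}$ and fed by the spacetime integrability of $R$ and $\Delta_\varphi R$, should close up to yield $\|F\|_\infty\leq C$.

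The main obstacle is the bootstrap of step (i). The difficulty is that $Q_F$ is measured against the time-dependent volume $\oo_\varphi^n\,dt$, whereas the natural pointwise-in-space bound coming from integrating $\p_t F=\Delta_\varphi R$ lives most cleanly against $\oo_g^n\,dt$; converting between the two requires controlling $e^{\pm F}$, which is circular at this stage. A careful iteration that interleaves small improvements of $F$'s integrability with the resulting improvements in the comparison of the two measures---in the spirit of the Guo--Phong--Tong / Chen--Cheng $L^\infty$ proofs---should close the loop, but the bookkeeping is delicate and is where I expect the bulk of the technical work to lie. Stages (ii) and (iii) are then more routine adaptations of the Chen--Cheng toolkit to the coupled Calabi-flow setting, since Calabi flow is not itself a Monge--Amp\`ere flow and the time-derivative has to be fed in through the external $R$ and $\Delta_\varphi R$ hypotheses rather than appearing intrinsically in the equation.
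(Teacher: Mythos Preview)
Your three-stage outline matches the paper, but the mechanism you propose for Stage~(i) --- the acknowledged crux --- does not close, and the paper's actual device is missing from your proposal. Integrating $\partial_t F=\Delta_\varphi R$ pointwise and invoking $B_{R,p_2}$ cannot upgrade $Q_F$ to spacetime control on $e^{(1+\delta)F}$, for exactly the measure-circularity reason you identify; no amount of ``delicate bookkeeping'' fixes this without a new idea. The paper's idea is the parabolic Lu--Seyyedali / GPT comparison: one solves three auxiliary parabolic Monge--Amp\`ere flows $(-\partial_t\psi)\,\oo_\psi^n=Q_F^{-n}\Phi(F)e^F\oo_g^n$, and analogously $\rho, v$ with right-hand sides $A_{R,p_1}^{-n}\Phi(R)^{p_1}e^F$ and $B_{R,p_2}^{-n}\Phi(\Delta_\varphi R)^{p_2}e^F$ (existence via Theorem~\ref{theo:3.1}). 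For $u=F+\ee(\psi+\rho+v)-\lambda\varphi$, the elementary inequality $\Delta_\varphi\psi\geq n(-\dot\psi)^{-1/n}Q_F^{-1}\Phi(F)^{1/n}-\tr_\varphi\oo_g$ (Lemma~\ref{lem:3.2}) and its analogues make $(\Delta_\varphi-\partial_t)(e^{\delta u})\geq e^{\delta u}\big(c\,\Phi(F)^{1/(n+1)}-C\big)$: the $\rho,v$ pieces absorb the bad $R$ and $\dot F=\Delta_\varphi R$ terms, while the $\psi$ piece supplies coercivity in $F$. Integrating over $M\times[0,T)$, splitting on $\{\Phi(F)\text{ large}\}$, and using the $\alpha$-invariant to handle $e^{-\ee\delta(\psi+\rho+v)}$ yields $\int_0^T\int_M e^{(1+\delta_0)F}\,\oo_g^n\leq C$ (Lemma~\ref{theo:main2}). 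This comparison step is the heart of the argument and is absent from your plan.

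Stages (ii) and (iii) also need correction. For (ii), Calabi flow has $\partial_t\varphi=R-\un R$ with no sign, so it does \emph{not} satisfy the hypothesis $-\partial_t\varphi>0$ of Chen--Cheng's parabolic $L^\infty$ estimate; the paper instead differentiates $\int_M e^{\delta F}\oo_\varphi^n$ in $t$ (Lemma~\ref{lem:interC}) to get a slice-wise entropy bound $\int_M e^{(1+\delta_1)F}\oo_g^n\leq C$, and then applies the \emph{elliptic} Ko{\l}odziej/GPT estimate (together with the bound on $|\sup_M\varphi|$ from a Mabuchi $d_2$-distance argument, Lemma~\ref{Lem:3.4}). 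For (iii), your Moser iteration on the fourth-order $F$-equation presupposes metric equivalence $\oo_\varphi\sim\oo_g$, which $\|\varphi\|_\infty$ alone does not give --- that is circular. The paper's route is far simpler: once $\varphi$ is bounded, one checks that the auxiliary $\psi,\rho,v$ (redefined with exponent $n+1<q<\min\{p_1,p_2\}$) are bounded by Theorem~\ref{theo:3.1}, and then the parabolic maximum principle applied to $u=F+\psi+\rho+v-\lambda\varphi$ forces $\hat\Phi(F)\leq C$ at the maximum of $u$, whence $\|F\|_\infty\leq C$.
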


First, we recall Chen-Cheng's result.

\begin{theo}(cf. \cite[Theorem 1.1 and Proposition 2.3]{[CC1]})\label{theo:3.1} Let $T>0$.  Consider the parabolic complex Monge-Ampere equation
\beqn
(-\partial_t\varphi)\;\oo_\varphi^n&=&e^H\oo_g^n,\label{eq:Z1}\\
\varphi(\cdot, 0)&=&\varphi_0. \label{eq:Z}
\eeqn
We have the following results.
\begin{enumerate}
    \item[(1).] Assume that $\varphi_0\in \cH(\oo_g)$   and $H(x, t)$ is smooth on $M\times [0, T]$. Then there exists a unique smooth
  solution $\varphi(x, t)$ to (\ref{eq:Z1})-(\ref{eq:Z}) on $M\times [0, T]$  starting from $\varphi_0$ such that
  $-\pd {\varphi}t>0$ and $\oo_g+\pbp\varphi(x, t)>0.$
  \item[(2).] If $H$ satisfies the condition
  \beq
  Ent_p(H):=\int_0^T\int_M\;e^H(|H|^p+1)\,\oo_g^n\,dt<\infty,\quad p>n+1,
  \eeq then we have
  \beq
  \|\varphi\|_{L^{\infty}}\leq C\Big(\oo_g, p, n, \|\varphi_0\|_{L^{\infty}}, T,  Ent_p(H)\Big).
  \eeq
\end{enumerate}

\end{theo}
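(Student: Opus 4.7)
Following the outline in the introduction, the argument has three stages, combining Chen-Cheng's parabolic Monge-Ampere estimate (Theorem~\ref{theo:3.1}) with the Sobolev-type inequalities of Section~2.

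\emph{Stage 1: $L^{1+\delta}$ bound on $F$.} Starting from the identity $\partial_t F = \Delta_\varphi R$, which follows from $\partial_t\log\det g_\varphi = g_\varphi^{i\bar j}\partial_t g_{\varphi,i\bar j}$ and $\partial_t\varphi = R - \un R$, I would differentiate $\int_M |F|^q\,\oo_\varphi^n$ in $t$ using $\partial_t \oo_\varphi^n = \Delta_\varphi R\cdot \oo_\varphi^n$. Integration by parts converts the forcing into terms of schematic form $\int|F|^{q-1}\nabla F\cdot\nabla R\,\oo_\varphi^n$ and $\int|F|^q\Delta_\varphi R\,\oo_\varphi^n$, which are absorbed via H\"older's inequality, the interpolation Lemma~\ref{lem:2.1}, and the parabolic Sobolev inequality Lemma~\ref{lem:interF}, with $A_{R,p_1}$ and $B_{R,p_2}$ supplying the integral bounds on $R$ and $\Delta_\varphi R$, and $Q_F$ supplying the base $L^1$-entropy. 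The result is $\int_0^T\!\int_M|F|^{1+\delta}\,\oo_\varphi^n\,dt\le C$ for some small $\delta>0$.

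\emph{Stage 2: $\|\varphi\|_\infty$ bound.} This is the crux of the proof. Because $\partial_t\varphi = R - \un R$ is sign-indefinite, Theorem~\ref{theo:3.1} does not apply to $\varphi$ directly. I would adapt Lu-Seyyedali~\cite{[LS]} parabolically: at each time $t$, write $\psi(\cdot, t) = \varphi(\cdot, t) - h(t)$ with $h(t)$ chosen so that, say, $\sup_M\psi(\cdot,t)=0$, so that $\psi$ satisfies the same elliptic complex Monge-Ampere equation $(\oo_g+\pbp\psi(\cdot,t))^n = e^F\oo_g^n$ as $\varphi$. The Stage~1 $L^{1+\delta}$ bound, combined slicewise with the Guo-Phong-Tong-type elliptic $L^\infty$ estimate~\cite{[GPT]}, bounds $\|\psi(\cdot,t)\|_\infty$ at those $t$ where the relevant entropy of $F$ is finite; integrating $\partial_t F = \Delta_\varphi R$ via the space-time bound $B_{R,p_2}$ then shows that adjacent time slices are comparable, upgrading this to a uniform-in-$t$ bound on $\|\psi\|_\infty$. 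The normalizing constant $h(t)$ is controlled by integrating $\partial_t\varphi = R - \un R$ in $t$ and using $A_{R,p_1}$ together with the conservation of $I_{\oo_g}$ along the Calabi flow (equation~(\ref{eq:I})), yielding $\|\varphi\|_\infty\le C$.

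\emph{Stage 3: $\|F\|_\infty$ bound.} With $\|\varphi\|_\infty$ in hand, a parabolic maximum principle argument applied to $\partial_t F = \Delta_\varphi R$, together with the space-time $L^{p_2}$ bound $B_{R,p_2}$ --- bootstrapped through a Moser-type iteration based on Lemma~\ref{lem:interF} starting from the $L^{1+\delta}$ input of Stage~1 --- upgrades this to $\|F\|_\infty\le C$. The main obstacle, concentrated in Stage~2, is that the sign indefiniteness of $\partial_t\varphi$ precludes any direct application of Theorem~\ref{theo:3.1}; promoting the slicewise elliptic bounds on $\psi$ to a uniform $L^\infty$ bound on $\varphi$ is precisely where the $B_{R,p_2}$ hypothesis on $\Delta_\varphi R$ becomes indispensable, consistent with the paper's remark on the ``inevitability'' of this technical condition.
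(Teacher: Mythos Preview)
Your proposal does not address the stated theorem. Theorem~\ref{theo:3.1} is a quoted result from Chen--Cheng~\cite{[CC1]} on the parabolic complex Monge--Amp\`ere equation $(-\partial_t\varphi)\,\oo_\varphi^n = e^H\oo_g^n$; the paper does not prove it and simply invokes it as a black box. What you have written is a sketch for Theorem~\ref{theo:main1} (the $L^\infty$ bounds on $\varphi$ and $F$ along the Calabi flow), which \emph{uses} Theorem~\ref{theo:3.1} as an ingredient. So as a proof of the statement you were given, this is simply off target.

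Setting that aside and reading your outline as an attempt at Theorem~\ref{theo:main1}, your strategy still diverges substantially from the paper's, and Stage~2 in particular has a real gap. The paper never works slicewise with the elliptic Guo--Phong--Tong estimate on a normalized $\psi(\cdot,t)=\varphi(\cdot,t)-h(t)$. Instead it constructs three auxiliary functions $\psi,\rho,v$ as solutions (via Theorem~\ref{theo:3.1}(1)) of parabolic Monge--Amp\`ere equations with right-hand sides $Q_F^{-n}\Phi(F)e^F$, $A_{R,p_1}^{-n}\Phi(R)^{p_1}e^F$, $B_{R,p_2}^{-n}\Phi(\Delta_\varphi R)^{p_2}e^F$, sets $u=F+\epsilon(\psi+\rho+v)-\lambda\varphi$, and uses Lemma~\ref{lem:3.2} to derive a lower bound for $(\Delta_\varphi-\partial_t)(e^{\delta u})$. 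Integrating this over $M\times[0,T)$ with $\delta$ tied to the $\alpha$-invariant yields $\int_0^T\!\int_M e^{(1+\delta_0)F}\,\oo_g^n\le C$ (Lemma~\ref{theo:main2}); a separate time-differentiation of $\int_M e^{\delta F}\oo_\varphi^n$ then gives the \emph{slicewise} bound $\int_M e^{\delta_1 F}\oo_\varphi^n\le C$ (Lemma~\ref{lem:interC}). Only now is Theorem~\ref{theo:3.1}(2) invoked --- on the auxiliary functions $\psi,\rho,v$, not on $\varphi$ --- to get their $L^\infty$ bounds, after which a direct parabolic maximum principle applied to $u=F+\psi+\rho+v-\lambda\varphi$ at an interior maximum point yields $\|F\|_\infty\le C$. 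Your Stage~2 plan of promoting a space-time $L^{1+\delta}$ bound on $F$ to slicewise elliptic control, and then ``upgrading'' via $B_{R,p_2}$, lacks a mechanism to guarantee the entropy is finite at \emph{every} time slice; the paper circumvents this entirely by routing through the auxiliary parabolic equations, which is where Theorem~\ref{theo:3.1} actually enters.
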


The following result is proved by Lu-Seyyedali \cite{[LS]}, and we conclude the proof for completeness.
    \begin{lem}\label{lem:3.2}(cf. \cite[Lemma 2.1]{[LS]}  )
        Let $h:X\to\mathbf{R}$ be a positive smooth function and $\varphi$ and $v$ be K\"ahler potentials such that
        \beqn
        (\oo_g+\pbp\varphi)^n&=&e^F\oo_g^n,\no\\
        (\oo_g+\pbp v)^n&=&e^Fh^n\oo_g^n.
        \eeqn
        Then $\Delta_\varphi v\geq nh-\tr_\varphi \oo_g$.
        \end{lem}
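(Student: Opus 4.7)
The plan is to reduce the inequality to a pointwise application of the arithmetic-geometric mean inequality on the eigenvalues of $\oo_v$ with respect to $\oo_\varphi$.

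First, I would take the ratio of the two Monge-Amp\`ere equations to obtain the clean pointwise identity
\[
\oo_v^n = h^n\,\oo_\varphi^n,
\]
which eliminates the reference metric $\oo_g$ and the density $e^F$. Since $v$ is a K\"ahler potential for $\oo_g$, $\oo_v=\oo_g+\pbp v>0$, and we can diagonalize $\oo_v$ with respect to the positive form $\oo_\varphi$ at any given point $x\in M$. Call the eigenvalues $\la_1,\dots,\la_n>0$. Then $\prod_i \la_i = h^n(x)$ and $\sum_i \la_i = \tr_\varphi \oo_v$.

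The key step is AM-GM: for positive numbers,
\[
\tr_\varphi \oo_v \;=\;\sum_{i=1}^n \la_i \;\geq\; n\Big(\prod_{i=1}^n \la_i\Big)^{1/n} \;=\; n\, h(x).
\]
All that remains is to translate $\tr_\varphi\oo_v$ into the Laplacian of $v$. Writing $\oo_v=\oo_g+\pbp v$ and tracing against $\oo_\varphi$, one has
\[
\tr_\varphi \oo_v \;=\; \tr_\varphi \oo_g + \tr_\varphi(\pbp v) \;=\; \tr_\varphi \oo_g + \Delta_\varphi v.
\]
Combining this identity with the AM-GM inequality yields $\Delta_\varphi v \geq n h - \tr_\varphi \oo_g$, which is the desired conclusion.

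The argument is essentially routine; there is no real obstacle. The only mild point is the positivity of the $\la_i$, which is automatic from $\oo_v>0$, and the observation that both Monge-Amp\`ere equations share the same right-hand side factor $e^F\oo_g^n$ so that $h^n$ is exactly the ratio $\oo_v^n/\oo_\varphi^n$; after that, AM-GM does all the work.
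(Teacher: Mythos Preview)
Your proof is correct and follows exactly the same approach as the paper: divide the two Monge--Amp\`ere equations to get $\oo_v^n=h^n\oo_\varphi^n$, apply the AM--GM inequality to obtain $\tr_\varphi\oo_v\geq n(\oo_v^n/\oo_\varphi^n)^{1/n}=nh$, and then use $\Delta_\varphi v=\tr_\varphi\oo_v-\tr_\varphi\oo_g$. The paper's presentation is slightly more compressed but the argument is identical.
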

    \begin{proof}
        We compute
        \beqn
        \Delta_\varphi v&=&\tr_\varphi(\pbp v)=\tr_\varphi(\oo_v-\oo_g)      \geq n\Big(\frac{\oo_v}{\oo_\varphi}\Big)^{\frac 1n}-\tr_\varphi\oo_g\no\\
        &\geq& n(e^{\frac Fn} h)e^{-\frac Fn}-\tr_\varphi\oo_g=nh-\tr_\varphi\oo_g .\no
        \eeqn
\end{proof}

The next result shows that $|\sup_M\varphi|$ is uniformly bounded along the Calabi flow.
\begin{lem}\label{Lem:3.4}(cf. \cite[Proof of Theorem 1.2]{[LZZ]})
   Let $\varphi(t)(t\in [0, T))$ be a solution of Calabi flow (\ref{eq000}) with $T<\infty.$   Then $|\sup_M\varphi|$ is  bounded by $\varphi(0)$ and $T$.
\end{lem}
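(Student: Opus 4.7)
The strategy is to combine the conservation of the functional $I_{\oo_g}$ along the Calabi flow with the Green's function estimate for $\oo_g$-plurisubharmonic potentials, supplemented by a Mabuchi-distance bound for the upper direction.

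First I differentiate $I_{\oo_g}(\varphi(t))$ via \eqref{eq:I}; since $\un R$ is by definition the average of $R$ against $\oo_\varphi^n$,
\[
\frac{d}{dt}I_{\oo_g}(\varphi(t))=\frac{1}{n!}\int_M(R-\un R)\,\oo_\varphi^n=0,
\]
so $I_{\oo_g}(\varphi(t))\equiv I_{\oo_g}(\varphi(0))$ on $[0,T)$. Integrating by parts in the definition of $I_{\oo_g}$ yields the identity
\[
(n+1)!\,I_{\oo_g}(\varphi)=(n+1)\int_M\varphi\,\oo_g^n-\sum_{j=0}^{n-1}(n-j)\int_M i\partial\varphi\wedge\bar\partial\varphi\wedge\oo_g^{n-1-j}\wedge\oo_\varphi^j,
\]
whose second term is a non-negative sum of generalised Dirichlet energies; in particular $\int_M\varphi(t)\,\oo_g^n\ge n!\,I_{\oo_g}(\varphi(0))$. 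The standard Green-function inequality for $\oo_g$-plurisubharmonic potentials (using $n+\Delta_g\varphi>0$),
\[
\frac{1}{V}\int_M\varphi\,\oo_g^n\;\le\; \sup_M\varphi\;\le\; \frac{1}{V}\int_M\varphi\,\oo_g^n+C(\oo_g,n),
\]
with $V=\int_M\oo_g^n$, then delivers the lower bound $\sup_M\varphi(t)\ge \tfrac{n!}{V}I_{\oo_g}(\varphi(0))$.

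For the upper bound on $\sup_M\varphi(t)$, the plan is to control $\int_M\varphi(t)\,\oo_g^n$ from above by bounding the generalised Dirichlet energies appearing in the identity above. Since the Calabi energy $\mathrm{Ca}(\varphi(s))=\tfrac{1}{n!}\int_M(R-\un R)^2\,\oo_\varphi^n$ is non-increasing along \eqref{eq000}, Cauchy--Schwarz gives that the Mabuchi $L^2$ distance along the Calabi-flow path obeys
\[
d_2(\varphi(0),\varphi(t))\;\le\;\int_0^t\bigl(n!\,\mathrm{Ca}(\varphi(s))\bigr)^{1/2}\,ds\;\le\; T\sqrt{n!\,\mathrm{Ca}(\varphi(0))}
\]
for all $t\in[0,T)$. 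Translating this $d_2$-bound into an upper bound on the generalised Dirichlet energies, and hence on $\int_M\varphi(t)\,\oo_g^n$, closes the estimate through the Green-function inequality above.

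The main obstacle is precisely this last translation step: passing from the finite $d_2$-distance to quantitative control of the mixed energies $\int_M i\partial\varphi\wedge\bar\partial\varphi\wedge\oo_g^{n-1-j}\wedge\oo_\varphi^j$ is delicate, and in the proof it will be carried out by pluripotential-theoretic arguments in the spirit of \cite{[LZZ]}. The resulting constant depends on $\varphi(0)$ through $I_{\oo_g}(\varphi(0))$, $\sup_M\varphi(0)$ and $\mathrm{Ca}(\varphi(0))$, and on $T$.
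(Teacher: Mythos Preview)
Your lower-bound argument for $\sup_M\varphi$ via the conservation of $I_{\oo_g}$ and the Hartogs/Green-function estimate is correct and self-contained. Your $d_2$-bound is also correct and in fact simpler than the paper's: you bound the Mabuchi length of the Calabi-flow path by $T\sqrt{n!\,\mathrm{Ca}(\varphi(0))}$ using only that the Calabi energy is non-increasing, whereas the paper invokes the Calabi--Chen contractivity of $d_2$ under simultaneous Calabi flow of two potentials (comparing $\varphi(t)$ and $\varphi(t+T/2)$).

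The genuine gap is the step you yourself flag as ``delicate'': converting the $d_2$-bound into an upper bound on the mixed Dirichlet energies $\int_M i\partial\varphi\wedge\bar\partial\varphi\wedge\oo_g^{n-1-j}\wedge\oo_\varphi^{j}$. You defer this to unspecified ``pluripotential-theoretic arguments'', which is not a proof. The paper bypasses this detour entirely: once $d_2(\varphi(0),\varphi(t))$ is bounded, H\"older gives $d_1(\varphi(0),\varphi(t))\le \vol(\oo_g)^{1/2}d_2(\varphi(0),\varphi(t))$, hence $d_1(0,\varphi(t))$ is bounded; then, since the flow preserves $I_{\oo_g}$ one has $\varphi(t)\in\cH_0$, and Chen--Cheng \cite[Lemma~4.4]{[CC2]} gives directly
\[
|\sup_M\varphi(t)|\le C\big(d_1(0,\varphi(t))+1\big).
\]
This single citation replaces your entire Green-function-and-Dirichlet-energy machinery for the upper bound, and also subsumes your lower bound. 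If you wish to keep your route, note that the mixed energies sum to a constant multiple of the Aubin--Yau functional, which is known to be controlled by $d_1(0,\varphi)$ for $\varphi\in\cH_0$; so the missing step again reduces to Chen--Cheng's lemma rather than to deeper pluripotential theory.
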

\begin{proof} The proof is divided into several steps.

   (1).  Let $\psi(t)=\varphi(t+\frac T2)$. Then $\psi(t)(t\in[0,\frac T2)$ is the solution to the Calabi flow. According to \cite{[Cal3]} the distance $d_2(\varphi(t),\psi(t))$ is non-increasing for $t\in[0,\frac T2)$. Therefore,
    \beqn
    d_2(\varphi(t),\psi(t))\leq d_2(\varphi(0),\psi(0))=d_2(\varphi(0),\varphi(\frac T2)).
    \eeqn
    This implies that for any $t\in[\frac T2,T)$, we have
    \beqn
    d_2(\varphi(0),\varphi(t))&\leq& d_2(\varphi(0),\varphi(t-\frac T2))+d_2(\varphi(t-\frac T2),\varphi(t))\no\\
    &\leq&\max_{s\in[0,\frac T2]}d_2(\varphi(0),\varphi(s))+d_2(\varphi(0),\varphi(\frac T2)).\label{eq:3.1}
    \eeqn

(2). We show that $d_1(\varphi(0),\varphi(t))$ is bounded. Indeed, for any two K\"ahler potentials $\phi_0$, $\phi_1$ and any smooth path $\phi_s(s\in[0,1])$ connecting $\phi_0$ and $\phi_1$, we have
    \beqn
    L_1(\phi_0,\phi_1):=\int_{0}^{1}\|\phi_s\|_{L^1(\oo_{\phi_s)}}ds\leq \vol(\oo_g)^{\frac 12}\int_{0}^{1}\|\phi_s\|_{L^2(\oo_{\phi_s)}}ds:=\vol(\oo_g)^{\frac 12}L_2(\phi_0,\phi_1).\label{eq:3.7}
    \eeqn
    Taking the infimum with respect to all smooth path connecting $\phi_0$ and $\phi_1$,  we have
\beq
d_1(\phi_0, \phi_1)\leq \vol(\oo_{g})^{\frac 12} d_2(\phi_0, \phi_1). \label{eq:B13}
\eeq
Therefore, (\ref{eq:3.1}) and (\ref{eq:B13}) imply that $d_1(\varphi(0),\varphi(t))$ is bounded for $t\in[0,T).$

(3).
We show that $|\sup_M\varphi|$ is uniformly bounded for $t\in[0,T)$. Without loss of generality, we may assume that $\varphi(0)\in\mathcal{H}_0$. Then by the equality (\ref{eq:I}) we have
    \beqn
    \frac d{dt}I_\oo(\varphi(t))=\frac{1}{n!}\int_M\frac{\partial \varphi}{\partial t}\;\oo_{\varphi(t)}^n=\frac{1}{n!}\int_M(R-\underline{R})\oo_{\varphi(t)}^n=0.\label{eq:3.10}
    \eeqn
    Thus (\ref{eq:3.10}) shows that $\varphi(t)\in\mathcal{H}_0$ for all $t\in[0,T).$ According to the Lemma 4.4 in Chen-Cheng\cite{[CC2]}, we have
    \beqn
    |\sup_M\varphi|\leq C\Big(d_1(0,\varphi)+1\Big)\leq C\Big(d_1(0,\varphi(0))+d_1(\varphi(0),\varphi(t))\Big)\label{eq:3.9}
\eeqn
     for some constant $C$. Combining with (\ref{eq:3.7}) and (\ref{eq:3.9}), we conclude this lemma.
\end{proof}

Combining the above results, we show that the space-time integral of $e^F$ is bounded for some $q>1.$

\begin{lem}\label{theo:main2}
        Let $\varphi(t)(t\in [0, T))$ be a solution of Calabi flow (\ref{eq000}) with $T<\infty.$   If $A_{R,p_1}$ and $B_{R,p_2}$ are bounded  with $\min\{p_1,p_2\}> n+1$ and $Q_F$ is uniformly bounded, then there exist $\delta_0>0$ and $C$ depending on $n,\oo_g,Q_F, A_{R,p_1}$, $B_{R,p_2}$, $\varphi(0)$ and $T$ such that
        \beq
        \int_{0}^{T}\,dt\int_{M}\;e^{(1+\delta_0)F}\,\oo_g^n\leq C.\label{eq:0.2}
        \eeq
    \end{lem}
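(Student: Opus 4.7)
The plan is to derive a differential inequality for
$$ U(t) := \int_M e^{(1+\delta) F(\cdot,t)}\,\oo_g^n = \int_M e^{\delta F}\,\oo_\varphi^n $$
with $\delta = \delta_0 > 0$ to be chosen small, using the parabolic identity $\partial_t F = \Delta_\varphi R$ along the Calabi flow. This identity follows by differentiating $\oo_\varphi^n = e^F \oo_g^n$ in $t$ and using $\partial_t \oo_\varphi = \pbp R$, which comes from the Calabi flow equation $\partial_t\varphi = R - \un R$. The plan then combines this with the hypotheses $A_{R,p_1}$, $B_{R,p_2}$, $Q_F$ and the $\sup_M\varphi$ bound from Lemma \ref{Lem:3.4}, in the spirit of Lu-Seyyedali \cite{[LS]} and the companion paper \cite{[LZZ]}.

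The first step is to use $\partial_t \oo_\varphi^n = \Delta_\varphi R\, \oo_\varphi^n$ to compute
$$ U'(t) = (1+\delta) \int_M e^{\delta F} \Delta_\varphi R \, \oo_\varphi^n. $$
The second step is to integrate by parts twice, moving $\Delta_\varphi$ from $R$ onto $e^{\delta F}$, and substitute the identity $\Delta_\varphi F = \tr_\varphi Ric(\oo_g) - R$. This decomposes $U'(t)$ into a favorable dissipative term $-(1+\delta)\delta \int R^2 e^{\delta F}\oo_\varphi^n$, a curvature term $(1+\delta)\delta \int R\,\tr_\varphi Ric(\oo_g) e^{\delta F}\oo_\varphi^n$, and an indefinite gradient term $(1+\delta)\delta^2 \int R\, |\Na F|_\varphi^2 e^{\delta F}\oo_\varphi^n$. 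The third step is to bound these terms using the hypotheses. The AM-GM inequality $\tr_\varphi \oo_g \leq n\, e^{-F/n}$, applied to the eigenvalues of $\oo_g$ relative to $\oo_\varphi$, is essential for the curvature term: it produces integrals of the form $\int e^{(\delta - k/n)F}\oo_\varphi^n$ which are bounded by $\int e^F \oo_g^n = \vol(\oo_g)$ for $\delta$ small enough. For the remaining terms I apply H\"older using the $L^{p_1}(\oo_\varphi)$ bound on $R$ from $A_{R,p_1}$ and the $L^{p_2}(\oo_\varphi)$ bound on $\Delta_\varphi R$ from $B_{R,p_2}$, together with interpolation against the $Q_F$ bound. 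The fourth step is to close the estimate in Gronwall form, obtaining $U(t) \leq C$ uniformly in $t$, and integrate over $[0,T]$ to conclude.

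The main obstacle is the indefinite gradient term $\int R\,|\Na F|_\varphi^2 e^{\delta F}\oo_\varphi^n$, which has no definite sign and is not directly controlled by the hypotheses. A straightforward Young split $|R||\Na F|_\varphi^2 \leq \frac{\epsilon}{2} R^2 + \frac{1}{2\epsilon}|\Na F|_\varphi^4$ would absorb part into the dissipative $-R^2 e^{\delta F}$ term but at the cost of introducing $\int |\Na F|_\varphi^4 e^{\delta F}\oo_\varphi^n$, which is not a priori controlled. To circumvent this, my plan is to integrate by parts iteratively in the gradient integral, rewriting $e^{\delta F}\Na F = \delta^{-1}\Na(e^{\delta F})$ and using $\Delta_\varphi F = \tr_\varphi Ric(\oo_g) - R$ to express the gradient contribution in terms of $R^2$, $R\tr_\varphi Ric(\oo_g)$, and mixed terms like $\int e^{\delta F}\Na R \cdot \Na F\,\oo_\varphi^n$ which can be estimated by Cauchy-Schwarz together with the $L^{p_2}$ bound on $\Delta_\varphi R$. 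A secondary delicacy is that $Q_F$ provides only a space-time $L^1(\oo_\varphi)$ bound on $\Phi(F)$, so the Gronwall closure must be carried out in integral form in $t$ rather than pointwise, and $\delta_0$ must be chosen sufficiently small that all H\"older exponents align.
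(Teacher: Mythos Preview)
Your approach has a genuine gap at the curvature term. The AM--GM inequality you invoke goes the other way: if $\lambda_1,\dots,\lambda_n$ are the eigenvalues of $\oo_\varphi$ relative to $\oo_g$, then $\tr_\varphi\oo_g = \sum \lambda_i^{-1} \geq n(\prod\lambda_i)^{-1/n} = ne^{-F/n}$, not $\leq$. There is no upper bound on $\tr_\varphi\oo_g$ (hence on $|\tr_\varphi Ric(\oo_g)|$) purely in terms of $F$; such a bound would amount to a two-sided $C^2$ estimate on $\varphi$, which is precisely what is \emph{not} yet available at this stage. Consequently the term $(1+\delta)\delta\int R\,\tr_\varphi Ric(\oo_g)\,e^{\delta F}\oo_\varphi^n$ cannot be absorbed in the way you describe, and your Gronwall closure does not go through. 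The gradient term $\int R\,|\Na F|_\varphi^2 e^{\delta F}\oo_\varphi^n$ is a second, independent obstruction: your proposed fix (another integration by parts) again produces $\tr_\varphi Ric(\oo_g)$ and a term $\int e^{\delta F}\Na R\cdot_\varphi\Na F$, and controlling the latter via $\|\Delta_\varphi R\|_{L^{p_2}}$ still leaves an uncontrolled factor of $|\Na F|_\varphi$ weighted by $e^{\delta F}$.

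The paper avoids both issues by a different mechanism, in the spirit of Lu--Seyyedali and Chen--Cheng. It introduces auxiliary potentials $\psi,\rho,v$ solving parabolic complex Monge--Amp\`ere equations (Theorem~\ref{theo:3.1}) with right-hand sides $\Phi(F)e^F$, $\Phi(R)^{p_1}e^F$, $\Phi(\Delta_\varphi R)^{p_2}e^F$, and applies $\Delta_\varphi-\partial_t$ to $e^{\delta u}$ with $u=F+\ee(\psi+\rho+v)-\lambda\varphi$. Lemma~\ref{lem:3.2} gives $\Delta_\varphi\psi \geq nQ_F^{-1}(-\dot\psi)^{-1/n}\Phi(F)^{1/n}-\tr_\varphi\oo_g$ (and analogously for $\rho,v$); the dangerous $\tr_\varphi\oo_g$ contributions are then absorbed by the $+\lambda\,\tr_\varphi\oo_g$ arising from $-\lambda\Delta_\varphi\varphi$, for $\lambda$ large. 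After the elementary inequality $Bx^{-1/n}+x\geq C(n)B^{n/(n+1)}$ one obtains coercive terms $\Phi(F)^{1/(n+1)}$, $\Phi(R)^{p_1/(n+1)}$, $\Phi(\Delta_\varphi R)^{p_2/(n+1)}$ which dominate $R$, $\Delta_\varphi R$ and the constants. The integral closes using the $\alpha$-invariant (choice of $\delta$) and the bound on $|\sup_M\varphi|$ from Lemma~\ref{Lem:3.4}; no $|\Na F|_\varphi^2$ term ever enters. The missing idea in your plan is precisely this auxiliary-function construction together with the $-\lambda\varphi$ shift that neutralises $\tr_\varphi\oo_g$.
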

    \begin{proof}
         We construct auxiliary functions $\psi$, $\rho$ and $v$  as the solutions to the following equations:
        \beqn
        &&(-\partial_t\psi)\oo_{\psi}^n=Q_F^{-n}\Phi(F)e^F\oo_g^n; \ \     \psi\Big|_{t=0}=0,\no\\
        &&(-\partial_t\rho)\oo_{\rho}^n=A_{R,p_1}^{-n}\Phi(R)^{p_1}e^F\oo_g^n;\ \  \rho\Big|_{t=0}=0,\no\\
        &&(-\partial_t v)\oo_{v}^n=B_{R,p_2}^{-n}\Phi(\Delta_\varphi R)^{p_2}e^F\oo_g^n,\ \  v\Big|_{t=0}=0.
        \eeqn
        Note that the existence of $\psi,\rho,v$ is guaranteed by Theorem \ref{theo:3.1}. For $0<\ee \leq 1$, we define $$u=F+\ee\psi+\ee\rho +\ee v-\lambda\varphi. $$ Using Lemma \ref{lem:3.2}, we can compute
        \beqn
        &&e^{-\delta u}(\Delta_\varphi-\partial_t)(e^{\delta u})\geq \delta\Delta_\varphi u-\delta \dot{u}\no\\
        &&\geq\delta(-R+\tr_{\varphi}\Rc)+\ee\delta\Big(nQ_F^{-1}(-\dot{\psi})^{-\frac 1n}\Phi(F)^{\frac 1n}-\tr_\varphi \oo_g\Big)\no\\
        &&\quad+\ee\delta\Big(nA_{R,p_1}^{-1}(-\dot{\rho})^{-\frac 1n}\Phi(R)^{\frac {p_1}{n}}-\tr_\varphi\oo_g\Big)+\ee\delta\Big(nB_{R,p_2}^{-1}(-\dot{v})^{-\frac 1n}\Phi(\Delta_\varphi R)^{\frac {p_2}{n}}-\tr_\varphi \oo_g\Big)\no\\
        &&\quad-n\lambda\delta+\delta\lambda \tr_\varphi\oo_g+\delta\Big(-\Delta_\varphi R-\ee\dot\psi-\ee\dot\rho-\ee \dot v+\lambda\dot\varphi\Big),\label{eq:3.14}
        \eeqn
     where we write $\dot u=\partial_t u$ for short. Choosing $\lambda=3+|\Rc|_g$ in (\ref{eq:3.14}), we have
    \beqn
    &&e^{-\delta u}(\Delta_\varphi-\partial_t)(e^{\delta u})\no\\&&\geq \delta\Big(-R+\ee nA_{R,p_1}^{-1}(-\dot\rho)^{-\frac 1n}\Phi(R)^{\frac {p_1}{n}}-\ee\dot\rho\Big)+\delta\Big(-\Delta_\varphi R\no\\
    &&\quad+\ee nB_{R,p_2}^{-1}(-\dot v)^{-\frac 1n}\Phi(\Delta_\varphi R)^{\frac {p_2}{n}}-\ee \dot v\Big)+\delta\ee\Big(nQ_F^{-1}(-\dot\psi)^{-\frac 1n}\Phi(F)^{\frac 1n}-\dot\psi\Big)\no\\
        &&\quad-n\lambda\delta+\delta\lambda(R-\underline{R})\no\\
        &&\geq\delta\Big((\lambda-1)R+\ee A_{R,p_1}^{-1}(-\dot\rho)^{-\frac 1n}\Phi(R)^{\frac {p_1}{n}}-\ee\dot\rho\Big)+\delta\Big(-\Delta_\varphi R\no\\
        &&\quad+\ee B_{R,p_2}^{-1}(-\dot v)^{-\frac 1n}\Phi(\Delta_\varphi R)^{\frac {p_2}{n}}-\ee \dot v\Big)\no\\
        &&\quad+\delta\ee\Big(nQ_F^{-1}(-\dot\psi)^{-\frac 1n}\Phi(F)^{\frac 1n}-\dot\psi\Big)-C,
    \eeqn
    where $C=n\lambda\delta+\delta\lambda\underline{R}$. Since $Bx^{-\frac 1n}+x\geq C(n)B^{\frac{n}{n+1}}$ for all $x> 0$,
     we get
    \beqn
        e^{-\delta u}(\Delta_\varphi-\partial_t)(e^{\delta u})&\geq&\delta\Big((\lambda-1)R+C\ee\Phi(R)^{\frac{p_1}{n+1}}\Big)+\delta\Big(-\Delta_\varphi R+C\ee\Phi(\Delta_\varphi R)^{\frac{p_2}{n+1}}\Big)\no\\
        &\quad&+\delta\ee C\Phi(F)^{\frac{1}{n+1}}-C
    \eeqn
   where $C$ depends on $n,A_{R,p_1}$, $B_{R,p_2}$ and $Q_F$. Let $\hat{\Phi}(F)=\delta\ee C\Phi(F)^{\frac{1}{n+1}}$. As a result, we have
    \beqn
    \int_{0}^{T}\;dt\int_{M}\;(\Delta_\varphi-\partial_t)(e^{\delta u})\oo_{\varphi}^n&\geq&
    \int_{0}^{T}\;dt\int_{M}\;e^{\delta u}\Big(\delta\Big((\lambda-1)R+C\ee\Phi(R)^{\frac{p_1}{n+1}}\Big)\no\\
    &\quad&+\delta\Big(-\Delta_\varphi R+C\ee\Phi(\Delta_\varphi R)^{\frac{p_2}{n+1}}\Big)
    +\hat{\Phi}(F)-C\Big)\oo_{\varphi}^n.\no\\
    \quad\label{eq:3.17}
    \eeqn
Using the equation of Calabi flow, we have
    \beqn
    \int_{0}^{T}\;dt\int_{M}\;(\Delta_\varphi-\partial_t)(e^{\delta u})\oo_{\varphi}^n&=&\int_{0}^{T}\;dt\int_{M}-\partial_te^{\delta u}\;\oo_{\varphi}^n\no\\
&&=\int_{0}^{T}-\partial_t\Big(\int_Me^{\delta u}\oo_{\varphi}^n\Big)\;dt+\int_{0}^{T}\;dt\int_M\;e^{\delta u}\partial_t(\oo_{\varphi}^n)\no\\
&&\leq\int_Me^{\delta u}\oo_{\varphi}^n\Big|_{t=0}+\int_{0}^{T}\;dt\int_M\;e^{\delta u}\dot{F}\;\oo_{\varphi}^n,\label{eq:3.18}
    \eeqn
   Combining (\ref{eq:3.17}) with (\ref{eq:3.18}), we get
    \beqn
    \int_Me^{\delta u}\oo_{\varphi}^n\Big|_{t=0}&\geq&\int_{0}^{T}\;dt\int_{M}e^{\delta u}\Big(\delta\Big((\lambda-1)R+C\ee\Phi(R)^{\frac{p_1}{n+1}}\Big)\no\\
    &\quad&+\delta\Big(-(1+\frac{1}{\delta})\Delta_\varphi R+C\ee\Phi(\Delta_\varphi R)^{\frac{p_2}{n+1}}\Big)
    +\hat{\Phi}(F)-C\Big)\,\oo_{\varphi}^n.
    \eeqn
    Since $Cx^{\beta}-x$ has lower bound which is independent of $x$ for all $\beta> 1$ and $\min\{p_1,p_2\}> n+1$, we get
    \beqn
        \int_Me^{\delta u}\oo_{\varphi}^n\Big|_{t=0}\geq\int_{0}^{T}\;dt\int_{M}\;e^{\delta u}\Big(\hat{\Phi}(F)-C(\lambda,\delta,\ee,n,A_{R,p_1},B_{R,p_2},Q_F,\oo_g)\Big)\oo_{\varphi}^n.\label{eq:3.19}
    \eeqn
Choosing $\delta=\lambda^{-1}\alpha(M,\omega_g)$, where $\alpha(M,\omega_g)$ is the $\alpha$ invariant of $\oo_g$, we have that
    \beqn
    \int_{0}^{T}\;dt\int_{M}\;e^{\delta u}\Big(\hat{\Phi}(F)-C\Big)\;\oo_{\varphi}^n\leq C(\delta,\lambda,\varphi(0)).\label{eq:3.20}
    \eeqn
     Next we define
    \beqn
    E_1&=&\{(x,t)\in M\times[0,T):\hat{\Phi}(F)-C\geq1\},\no\\
    E_2&=&\{(x,t)\in M\times[0,T):\hat{\Phi}(F)-C< 1\}.
    \eeqn
By definition, $F$ is bounded on $E_2$. Thus by (\ref{eq:3.19}) and  (\ref{eq:3.20})   we have
    \beqn
    \int_{E_1}e^{\delta u}\;\oo_{\varphi}^n\,dt&\leq&   \int_{E_1}e^{\delta u}\Big(\hat{\Phi}(F)-C\Big)\;\oo_{\varphi}^n\,dt\no\\
    &\leq&C -\int_{E_2}e^{\delta u}\Big(\hat{\Phi}(F)-C\Big)\;\oo_{\varphi}^n\,dt\no\\
    &\leq&C+C\int_{E_2}e^{\delta u}\,\oo_{\varphi}^n\,dt\no\\
    &\leq& C+C\int_{E_2}e^{\delta F-\lambda\delta  \varphi}\;\oo_{\varphi}^n\,dt\no\\
    &\leq& C(n,\delta,\ee,\lambda,A_{R,p_1},B_{R,p_2},Q_F,\oo_g,\varphi(0),T).\label{eq:A1}
    \eeqn
    By definition of $u$, we have
    \beqn
    \int_{E_1}e^{(1+\delta)F+\ee\delta(\psi+\rho+v)}\,\oo_g^n\,dt\leq e^{\delta\lambda|\sup_{M}\varphi|}\int_{E_1}e^{\delta u+F}\,\oo_g^n\,dt.
    \eeqn
 Since $|\sup_M\varphi|$ is bounded by Lemma \ref{Lem:3.4}, we conclude that    $\int_{E_1}e^{(1+\delta)F+\ee\delta(\psi+\rho+v)}\oo_g^ndt$ is bounded. Using H\"older inequality, we get
    \beqn
    &&\int_{E_1}e^{(1+\frac{\delta}{2})F}\oo_g^ndt=\int_{E_1}e^{(1+\frac{\delta}{2})F+\frac{1+\frac{\delta}{2}}{1+\delta}\ee\delta(\psi+\rho+v)}e^{-\frac{1+\frac{\delta}{2}}{1+\delta}\ee\delta(\psi+\rho+v)}\oo_g^ndt\no\\
    &\leq&\Big(\int_{E_1}e^{(1+\delta)F+\ee\delta(\psi+\rho+v)}\oo_g^ndt\Big)^{\frac{1+\frac{\delta}{2}}{1+\delta}}\Big(\int_{E_1}e^{-\frac{1+\frac{\delta}{2}}{\frac{\delta}{2}}\ee\delta(\psi+\rho+v)}\oo_g^ndt\Big)^{\frac{\frac{\delta}{2}}{1+\delta}}.
    \eeqn
    Choosing $\ee$ small enough such that $(2+\delta)\ee<\frac{\alpha(M,\oo_g)}{3}$,   we conclude that
    \beqn
    \int_{E_1}e^{(1+\frac{\delta}{2})F}\oo_g^ndt\leq C(n,\lambda,\delta,\ee,A_{R,p_1},B_{R,p_2},Q_F,\oo_g,\varphi(0),T). \label{eq:0.3}
    \eeqn
    Combining (\ref{eq:0.3}) with the fact that $F$ is bounded on $E_2$, we have (\ref{eq:0.2}). The lemma is proved.
\end{proof}

Using Lemma \ref{theo:main2} and the Calabi flow equation, we show that the $L^{q}(M, \oo_g)$ norm of $F$ is bounded for some $q>1.$
\begin{lem}\label{lem:interC} Under the assumption of Theorem \ref{theo:main1}, there exist $\delta_1$ and $C$ depending on $n,\oo_g,Q_F, A_{R,p_1}$, $B_{R,p_2}$, $\varphi(0)$ and $T$ such that
    \beqn
    \int_M e^{\delta_1 F}\,\oo_\varphi^n\leq C.\label{eq:0.4}
    \eeqn
\end{lem}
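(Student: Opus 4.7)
The plan is to upgrade the space-time bound from Lemma \ref{theo:main2} to a pointwise-in-time bound by exploiting the evolution equation of $F$ along the Calabi flow and integrating in time.

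The first step is to derive the evolution identity $\partial_t F = \Delta_\varphi R$. Along the Calabi flow, $\partial_t\varphi = R - \un R$, so from $\partial_t\oo_\varphi = \pbp\dot\varphi$ one computes $\partial_t\oo_\varphi^n = \Delta_\varphi(R-\un R)\,\oo_\varphi^n = \Delta_\varphi R\,\oo_\varphi^n$; comparing with $\oo_\varphi^n = e^F\oo_g^n$ and using that $\oo_g^n$ is independent of $t$ yields $\partial_t F = \Delta_\varphi R$. Hence
\[
\frac{d}{dt}\int_M e^{\delta_1 F}\,\oo_\varphi^n = \frac{d}{dt}\int_M e^{(1+\delta_1)F}\,\oo_g^n = (1+\delta_1)\int_M e^{\delta_1 F}\,\Delta_\varphi R\,\oo_\varphi^n.
\]

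The second step is to absorb the Laplacian via Young's inequality. For conjugate exponents $p',q'>1$ with $\frac{1}{p'}+\frac{1}{q'}=1$,
\[
e^{\delta_1 F}\,|\Delta_\varphi R| \leq \frac{1}{p'}\,e^{\delta_1 p'F} + \frac{1}{q'}\,|\Delta_\varphi R|^{q'}.
\]
Taking $q'=p_2$, so $p'=p_2/(p_2-1)$, and choosing $\delta_1 = \delta_0(p_2-1)/p_2$ where $\delta_0$ is the constant supplied by Lemma \ref{theo:main2}, one finds $\delta_1 p' = \delta_0$, and therefore
\[
\Big|\frac{d}{dt}\int_M e^{\delta_1 F}\,\oo_\varphi^n\Big| \leq C\int_M e^{(1+\delta_0)F}\,\oo_g^n + C\int_M |\Delta_\varphi R|^{p_2}\,\oo_\varphi^n.
\]

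To conclude, I would integrate this inequality in $t$ from $0$ to an arbitrary $t^\ast\in[0,T)$. The two integrands on the right are in $L^1([0,T))$ as functions of $t$: the first by Lemma \ref{theo:main2}, the second by the hypothesis that $B_{R,p_2}$ is bounded. This yields
\[
\int_M e^{\delta_1 F(\cdot,t^\ast)}\,\oo_\varphi^n \leq \int_M e^{(1+\delta_1)F(\cdot,0)}\,\oo_g^n + C,
\]
with the initial term controlled by $\varphi(0)$. The argument is essentially a bookkeeping step; the only point requiring care is the choice of $\delta_1$, which must be small enough so that simultaneously $\delta_1 p' \leq \delta_0$ (to use Lemma \ref{theo:main2}) and $q' \leq p_2$ (to use the hypothesis on $B_{R,p_2}$). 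The assumption $p_2 > n+1$ leaves ample room for this, so no genuine obstacle is expected.
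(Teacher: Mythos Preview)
Your proof is correct and follows essentially the same route as the paper: differentiate $\int_M e^{\delta_1 F}\,\oo_\varphi^n$ in $t$, use $\dot F=\Delta_\varphi R$, integrate in time, and split the product $e^{\delta_1 F}|\Delta_\varphi R|$ so that the space-time bound from Lemma~\ref{theo:main2} and the hypothesis on $B_{R,p_2}$ control the right-hand side. The only cosmetic difference is that the paper uses H\"older's inequality (with conjugate exponents $l=p_2/(p_2-1)$ and $p_2$) where you use Young's inequality, leading to the identical choice $\delta_1=\delta_0(p_2-1)/p_2$.
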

\begin{proof}
    Let $\delta> 0$.  Taking the derivative with respect to $t$, we find that
    \beqn
    \frac{\partial}{\partial t}\Big(\int_M\;e^{\delta F}\oo_{\varphi}^n\Big)=\int_M(1+\delta)\dot{F}e^{\delta F}\;\oo_{\varphi}^n. \no
    \eeqn
    Hence we have
    \beqn
    \int_M\;e^{\delta F}\;\oo_{\varphi}^n\Big|_t-\int_M\;e^{\delta F}\;\oo_{\varphi}^n\Big|_0\leq\int_{0}^{T}\;dt\int_{M}(1+\delta)|\dot{F}|e^{\delta F}\oo_{\varphi}^n. \no
    \eeqn
    Using the H\"older inequality we have
    \beqn
    \int_{0}^{T}\;dt\int_{M}\;(1+\delta)\dot{F}e^{\delta F}\;\oo_{\varphi}^n\leq(1+\delta)\Big(\int_{0}^{T}\;dt\int_{M}\;e^{l\delta F}\;\oo_{\varphi}^n\Big)^{\frac 1l}\Big(\int_{0}^{T}\;dt\int_{M}\;|\Delta_\varphi R|^{p_2}\;\oo_{\varphi}^n\Big)^{\frac {1}{p_2}},\no
    \eeqn
    where $\frac1l+\frac {1}{p_2}=1$. Choosing $\delta$ small and using Lemma  \ref{theo:main2}, we have (\ref{eq:0.4}). The lemma is proved.
\end{proof}
Combining the above estimates and using the maximum principles, we show Theorem \ref{theo:main1}.
\begin{proof}[Proof of Theorem \ref{theo:main1}]
    By Theorem \ref{theo:3.1} and Lemma \ref{theo:main2}, we conclude that $\psi$ is bounded. Moreover by Lemma \ref{lem:interC} we conclude that $\varphi$ is also uniformly bounded. We define new auxiliary functions as the solutions of the following equations:
    \beqn
    (-\partial_t\rho)\oo_{\rho}^n&=&A_{R,q}^{-n}\Phi(R)^qe^F\oo_g^n,\ \  \rho\Big|_{t=0}=0,\no\\
    (-\partial_t v)\oo_{v}^n&=&B_{R,q}^{-n}\Phi(\Delta_\varphi R)^qe^F\oo_g^n,\ \  v\Big|_{t=0}=0,
    \eeqn
    where $n+1< q< \min\{p_1,p_2\}$. For $0<\sigma< \delta_0$, we have
    \beqn
    &&\int_{0}^{T}\;dt\int_{M}\;|\Phi(R)|^{(1+\sigma)q}e^{(1+\sigma)F}\;\oo_g^n=\int_{0}^{T}\;dt\int_{M}\;|\Phi(R)|^{(1+\sigma)q}e^{\sigma F}\;\oo_{\varphi}^n\no\\
    &\leq&\Big(\int_{0}^{T}\;dt\int_{M}\;|\Phi(R)|^{(1+\sigma)q\frac{\delta_0}{\delta_0-\sigma}}\;\oo_{\varphi}^n\Big)^{\frac{\delta_0-\sigma}{\delta_0}}\Big(\int_{0}^{T}\;dt\int_{M}\;e^{\delta_0F}\;\oo_\varphi^n\Big)^{\frac {\sigma}{\delta_0}}\no\\
    &\leq& C(n,\oo_g,Q_F,A_{R,p_1},B_{R,p_2},\varphi(0),T)
    \Big(\int_{0}^{T}\;dt\int_{M}\;|\Phi(R)|^{(1+\sigma)q
    \frac{\delta_0}{\delta_0-\sigma}}\oo_{\varphi}^n\Big)^{\frac{\delta_0-\sigma}{\delta_0}},\no
    \eeqn
where we used Lemma \ref{theo:main2} in the last inequality. Now we can choose $\sigma$ small enough such that $(1+\sigma)\frac{\delta_0}{\delta_0-\sigma}q< p_1$. Therefore, we conclude that $\rho$ is bounded by Theorem \ref{theo:3.1}. Similarly, we have that $v$ is also bounded. Let $u=F+\psi+\rho+v-\lambda \varphi$, we have
    \beqn
    (\Delta_\varphi-\partial_t)u\geq e^{\delta u}\Big(\hat{\Phi}(F)-C\Big),
    \eeqn
    where we use the same argument as  in the proof of Theorem \ref{theo:main2} and $C$ depends on $n,\oo_g,A_{R,p_1},B_{R,p_2}$ and $Q_F$. Fixing $\ee> 0$, we denote $(x_0,t_0)$ the maximum point of $u$ on $M\times[0,T-\ee]$. We have
   \beq
    0\geq   (\Delta_\varphi-\partial_t)u\geq e^{\delta u}\Big(\hat{\Phi}(F)-C\Big).\no
    \eeq
    This implies that $|F(x_0,t_0)|$ is bounded. As a result, for any $(x,t)\in M\times [0,T-\ee]$
    \beqn
    u(x,t)\leq u(x_0,t_0)&=&F(x_0,t_0)+(\psi+\rho+v-\lambda\varphi)((x_0,t_0)\no\\&\leq& C(n,\oo_g,Q_F,A_{R,p_1},B_{R,p_2},\varphi(0),T).\no
    \eeqn
    This implies $F\leq C$.  Replacing $u$ by $u'=-F+\psi+\rho+v-\lambda\varphi$, the same argument shows that $F\geq -C$. Therefore we conclude that on $M\times[0,T-\ee]$,
    \beq
    |F|\leq C(n,\oo_g,Q_F, A_{R,p_1},B_{R,p_2}, \varphi(0), T).\no
    \eeq
        Taking $\ee\ri 0$, we have (\ref{eq:0.5}). The theorem is proved.
\end{proof}
\subsection{Estimates of $\|n+\Delta\varphi\|_{s}$}
In this subsection, we  follow similar method as in Chen-Cheng \cite{[CC3]} and Li-Zhang-Zheng \cite{[LZZ]} to prove that $\|n+\Delta\varphi\|_{s}$ is bounded if $Q_F,A_{R,2p}^n,B_{R,p+1}^n$ are bounded with $p> n$. We recall the following Chen-Cheng's estimates in \cite{[CC3]}, see also Li-Zhang-Zheng \cite{[LZZ]}.

\begin{lem}\label{lem:interD}(cf. \cite{[CC3]}, \cite[Lemma 2.3]{[LZZ]}) We define
        \beq
    v=e^{-\alpha(F+\lambda\varphi)}(n+\Delta_g\varphi). \label{eq:v3}
    \eeq
    Let $q>1$ and $\alpha\geq q$. There exists a constant $C(\oo_g)$ such that for $\la>C(\oo_g)$, we have
    \beqn &&
    \frac {3(q-1)}{q^2}\int_M\;  |\Na  v^{\frac q2}|_{\varphi}^2\;\oo_{\varphi}^n+\frac {\la \al}4\int_M\; e^{\frac \al{n-1} (F+\la \varphi)-\frac F{n-1}} v^{q+\frac{1}{n-1}}
    \,\oo_{\varphi}^n \nonumber\\
    &\leq & \int_M\; \td Rv^{q}\oo_{\varphi}^n,\label{eq:v12}
    \eeqn where $\td R=\al(\la n-R)+\frac {\al\la}{\al-1}+\frac 1n e^{-\frac Fn}R_g$ .
\end{lem}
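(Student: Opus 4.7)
The strategy is to derive a pointwise Bochner-type differential inequality for $v$, then multiply by $v^{q-1}$, integrate against $\oo_\varphi^n$, and apply integration by parts, using Cauchy's inequality to absorb cross terms.

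The first step is a weighted Yau/Aubin $C^2$ computation. Starting from the standard inequality
\begin{equation*}
\Delta_\varphi \log(n+\Delta_g\varphi) \;\geq\; \frac{\Delta_g F}{n+\Delta_g\varphi} \;-\; C(\oo_g)\,\tr_\varphi\oo_g \;+\; \frac{e^{-F/n} R_g}{n(n+\Delta_g\varphi)},
\end{equation*}
combined with $\Delta_\varphi F = \tr_\varphi \Rc - R$ (from $Ric(\oo_\varphi) = \Rc - \pbp F$) and $\Delta_\varphi\varphi = n - \tr_\varphi\oo_g$, the chain rule
\begin{equation*}
\Delta_\varphi\log v = \Delta_\varphi\log(n+\Delta_g\varphi) - \alpha\,\Delta_\varphi F - \alpha\lambda\,\Delta_\varphi\varphi
\end{equation*}
produces a large positive gain $+\alpha\lambda\,\tr_\varphi\oo_g$ from the weight together with contributions $-\alpha n \lambda + \alpha R + \frac{1}{n}e^{-F/n}R_g$ arising from the scalar curvature. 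Choosing $\lambda$ larger than the bisectional curvature constant $C(\oo_g)$ makes the weight gain dominate the Yau correction, yielding the schematic pointwise bound
\begin{equation*}
\Delta_\varphi \log v \;\geq\; -\tilde R_0 \;+\; \tfrac{\lambda\alpha}{2}\,\tr_\varphi\oo_g, \qquad \tilde R_0 := \alpha(\lambda n - R) + \tfrac{1}{n}e^{-F/n}R_g.
\end{equation*}

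Next, multiply by $v^q$, integrate, and use
\begin{equation*}
\int_M v^q \Delta_\varphi \log v\,\oo_\varphi^n = \int_M v^{q-1}\Delta_\varphi v\,\oo_\varphi^n - \int_M v^{q-2}|\Na v|_\varphi^2\,\oo_\varphi^n
\end{equation*}
together with integration by parts $\int v^{q-1}\Delta_\varphi v\,\oo_\varphi^n = -(q-1)\int v^{q-2}|\Na v|_\varphi^2\,\oo_\varphi^n$ to obtain $-\tfrac{4}{q}\int|\Na v^{q/2}|_\varphi^2\,\oo_\varphi^n$ on the left-hand side. The cross terms arising from expanding $|\Na \log v|_\varphi^2$, most importantly $\alpha\,\langle \Na F, \Na(n+\Delta_g\varphi)\rangle_\varphi/(n+\Delta_g\varphi)$ and its $\Na\varphi$ analogue, are handled by Cauchy's inequality with an appropriate weight: this absorption consumes a small fraction of the gradient integral, leaves the sharp coefficient $\tfrac{3(q-1)}{q^2}$ in front of $\int|\Na v^{q/2}|_\varphi^2$ (where the assumption $\alpha\geq q$ enters to make the absorption legal), and generates the residual $\tfrac{\alpha\lambda}{\alpha-1}$ term inside $\tilde R$.

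Finally, the arithmetic--geometric mean inequality applied to the eigenvalues $\mu_i$ of $g_\varphi^{-1}g$ gives
\begin{equation*}
\tr_\varphi\oo_g = \sum_i \frac{1}{\mu_i} \;\geq\; e^{-F/(n-1)}(n+\Delta_g\varphi)^{1/(n-1)},
\end{equation*}
and the substitution $(n+\Delta_g\varphi)^{1/(n-1)} = e^{\alpha(F+\lambda\varphi)/(n-1)}\,v^{1/(n-1)}$ rewrites the gain $\tfrac{\lambda\alpha}{2}\int v^q\tr_\varphi\oo_g\,\oo_\varphi^n$ precisely as the second term on the left-hand side of the lemma, with coefficient $\tfrac{\lambda\alpha}{4}$ after the extra factor $1/2$ is consumed by the Cauchy step. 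The main obstacle is the careful bookkeeping of constants through the Cauchy absorptions, so as to land simultaneously on the sharp coefficients $\tfrac{3(q-1)}{q^2}$ and $\tfrac{\lambda\alpha}{4}$ and on the exact form of $\tilde R$; calibrating the size of $\lambda$ against the bisectional-curvature constant $C(\oo_g)$ from the Yau step, and choosing $\alpha\geq q$ to control the cross-term weight, are the delicate points of the argument.
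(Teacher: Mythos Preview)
The paper does not prove this lemma; it simply quotes it from Chen--Cheng \cite{[CC3]} and \cite[Lemma~2.3]{[LZZ]}. Your outline follows the same architecture as those references---the weighted Aubin--Yau second-order computation, multiplication by a power of $v$ and integration, Cauchy absorption of cross terms (this is where $\alpha\ge q$ is used), and the AM--GM bound $\tr_\varphi\oo_g \ge e^{-F/(n-1)}(n+\Delta_g\varphi)^{1/(n-1)}$ to produce the $v^{q+1/(n-1)}$ term---so in spirit the approach is the correct one.

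There is, however, a genuine inconsistency in your sketch. The ``schematic pointwise bound'' $\Delta_\varphi\log v \ge -\tilde R_0 + \tfrac{\lambda\alpha}{2}\tr_\varphi\oo_g$ has silently dropped the term $\frac{\Delta_g F}{n+\Delta_g\varphi}$ inherited from the Yau inequality; note this is $\Delta_g$, not $\Delta_\varphi$, and the identity $\Delta_\varphi F=\tr_\varphi\Rc - R$ only handles the Laplacian of the \emph{weight}, not this residual. If your pointwise bound actually held as written, integrating against $v^q$ would give the lemma immediately with the stronger coefficient $4/q$ and no absorption step at all---so your own insistence on a Cauchy step shows the bound is too optimistic. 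In the actual argument the $\Delta_g F$ term survives to the integral level and must be integrated by parts with respect to $\oo_g^n=e^{-F}\oo_\varphi^n$, generating terms of the form $\nabla_g F\cdot\nabla_g\big(v^{q-1}e^{\cdots}\big)$; it is \emph{these}, together with the product-rule cross terms you allude to, that the Cauchy--Schwarz step absorbs, and the specific constants $\tfrac{3(q-1)}{q^2}$, $\tfrac{\alpha\lambda}{\alpha-1}$, and $\tfrac{1}{n}e^{-F/n}R_g$ appearing in $\tilde R$ are precisely the residue of that computation. Your plan is salvageable, but you must keep the $\Delta_g F$ term on the books through the integration step rather than letting it evaporate pointwise.
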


Using the equation (\ref{eq000}) of Calabi flow, we have
\begin{lem}\label{lem:3.7}
    Let $   v=e^{-\alpha(F+\lambda\varphi)}(n+\Delta_g\varphi)$ as in Lemma \ref{lem:interD}. For any $q> 0$, we have
    \beqn
    \int_M\; v^{q}\;\oo_{\varphi}^n\Big|_t-\int_M\; v^{q}\;\oo_{\varphi}^n\Big|_0&\leq& Cq\Big(\int_{0}^{T}\;dt\int_{M}\;v^{qr}\;\oo_{\varphi}^n\Big)^{\frac 1r}+Cq\Big(\int_{0}^{T}\;dt\int_{M}\;v^{qb}\;\oo_{\varphi}^n\Big)^{\frac 1b}\no\\\ &&+Cq\Big(\int_{0}^{T}\;dt\int_{M}\;v^{2q}\;\oo_{\varphi}^n\Big)^{\frac 12}, \label{eq:0.6}
    \eeqn
where C depends on $\al, A_{R,2p}^n, B_{R,p+1}^n, \|\varphi\|_\infty$ and $\|F\|_\infty$. Moreover, $p,r$ and $b$ satisfy the following conditions:
\beq
\frac{1}{2p}+\frac1r=1,\quad\quad
 \frac1{p+1}+\frac 1b=1.\label{eq:2.4}
 \eeq
\end{lem}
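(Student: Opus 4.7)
The plan is to differentiate $\int_M v^q\oo_\varphi^n$ in time along the Calabi flow, integrate from $0$ to $t$, and estimate each resulting contribution by H\"older-type inequalities tailored to the integrability hypotheses on $R$ and $\Delta_\varphi R$. Using $\dot\varphi=R-\un R$, the Monge--Amp\`ere identity $\dot F=\Delta_\varphi\dot\varphi=\Delta_\varphi R$, and $\partial_t(n+\Delta_g\varphi)=\Delta_g R$, a direct computation gives
\beqs
\frac{d}{dt}\int_M v^q\oo_\varphi^n=\int_M\Big[(1-\alpha q)v^q\Delta_\varphi R-\alpha q\lambda v^q(R-\un R)+qv^{q-1}e^{-\alpha(F+\lambda\varphi)}\Delta_g R\Big]\oo_\varphi^n,
\eeqs
so the task reduces to estimating four space-time integrals after integration in $s\in[0,t]$.

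The first three are handled by standard H\"older-type arguments. For the $v^q\Delta_\varphi R$ piece, H\"older with conjugate exponents $b$ and $p+1$ together with $\|\Delta_\varphi R\|_{p+1}\le B_{R,p+1}^{n/(p+1)}$ produces $Cq\big(\int_0^T\int_M v^{qb}\oo_\varphi^n\,ds\big)^{1/b}$. For $v^q R$, H\"older with exponents $r$ and $2p$ and $\|R\|_{2p}\le A_{R,2p}^{n/(2p)}$ produces $Cq\big(\int_0^T\int_M v^{qr}\oo_\varphi^n\,ds\big)^{1/r}$. For the $\un R$ piece, since $\un R$ is a cohomological constant and $\vol(\oo_\varphi)$ is preserved along the flow, Cauchy--Schwarz in space followed by Cauchy--Schwarz in time yields $Cq\big(\int_0^T\int_M v^{2q}\oo_\varphi^n\,ds\big)^{1/2}$.

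The main obstacle is the term involving $\Delta_g R$, because only the $\varphi$-Laplacian of $R$ is controlled by hypothesis, not the $g$-Laplacian. My plan is to exploit the pointwise identity $v^{q-1}e^{-\alpha(F+\lambda\varphi)}(n+\Delta_g\varphi)=v^q$ together with the Cauchy--Schwarz bound $|\Delta_g R|\le\sqrt n\,(n+\Delta_g\varphi)|\Na^2 R|_\varphi$ (obtained in coordinates simultaneously diagonalizing $g$ and $g_\varphi$) to reduce the integrand to $Cqv^q|\Na^2 R|_\varphi$. Cauchy--Schwarz in space and time then bounds this by $Cq\big(\int_0^T\int_M v^{2q}\oo_\varphi^n\,ds\big)^{1/2}\big(\int_0^T\int_M|\Na^2 R|_\varphi^2\oo_\varphi^n\,ds\big)^{1/2}$, and the second factor must be absorbed into the constant. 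This is done via the Bochner formula $\int|\Na^2 R|_\varphi^2\oo_\varphi^n=\int(\Delta_\varphi R)^2\oo_\varphi^n-\int Ric(\oo_\varphi)(\Na R,\Na R)\oo_\varphi^n$, bounding $(\Delta_\varphi R)^2$ by $B_{R,p+1}$ through H\"older, and using the K\"ahler decomposition $Ric(\oo_\varphi)=Ric(\oo_g)-\pbp F$ together with integration by parts to reduce the Ricci contribution to quantities controlled by $A_{R,2p}$, $\|F\|_\infty$, and $\|\varphi\|_\infty$.

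The most delicate technical point is to ensure that this absorption produces no hidden dependence on $\|n+\Delta_g\varphi\|_\infty$, which is precisely what we are ultimately trying to bound; the choice of H\"older exponents and the integration-by-parts that transfer $\pbp F$ onto $R$ must be arranged so that every auxiliary term is controlled strictly in terms of the hypotheses $A_{R,2p}^n$, $B_{R,p+1}^n$, and the $L^\infty$ bounds on $F$ and $\varphi$ furnished by Theorem \ref{theo:main1}.
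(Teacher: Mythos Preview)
Your argument tracks the paper's proof almost exactly: the time-derivative formula, the H\"older splitting of the $\Delta_\varphi R$ and $R$ pieces with exponents $(b,p+1)$ and $(r,2p)$, and the reduction of the $\Delta_g R$ term via $|\Delta_g R|\le(n+\Delta_g\varphi)|\Na^2 R|_\varphi$ followed by $v^{q-1}e^{-\alpha(F+\lambda\varphi)}(n+\Delta_g\varphi)=v^q$ and Cauchy--Schwarz are all precisely what the paper does.

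The one place where you diverge, and where your proposal has a genuine gap, is the control of $\int_0^T\int_M|\Na^2 R|_\varphi^2\,\oo_\varphi^n$. You invoke a Riemannian Bochner identity with a Ricci correction and then sketch a plan to absorb the Ricci term via $Ric(\oo_\varphi)=Ric(\oo_g)-\pbp F$. This plan is both unnecessary and, as written, does not close: the $Ric(\oo_g)$ contribution is contracted twice with $g_\varphi^{-1}$ and hence is only bounded by $C(g)\,\tr_\varphi\oo_g\cdot|\Na R|_\varphi^2$, and $\tr_\varphi\oo_g$ is not controlled by the hypotheses (only $e^F=\det$ is). The point you are missing is that on a K\"ahler manifold, the relevant Hessian here is the \emph{complex} Hessian $\partial\bar\partial R$ (that is all the pointwise inequality $|\Delta_g R|\le(n+\Delta_g\varphi)|\Na^2 R|_\varphi$ uses), and for it one has the exact identity
\[
\int_M|\partial\bar\partial R|_\varphi^2\,\oo_\varphi^n=\int_M(\Delta_\varphi R)^2\,\oo_\varphi^n,
\]
obtained by two integrations by parts using only that the $(2,0)$-curvature vanishes on a K\"ahler manifold (so $[\nabla_i,\nabla_k]=0$). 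There is no Ricci term. This is exactly the paper's equation (\ref{eq:A4}), and with it the second factor is bounded directly by $B_{R,p+1}^n$ (since $p+1>2$), completing the proof. Replace your Bochner paragraph with this one-line K\"ahler identity and the argument is complete and identical to the paper's.
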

\begin{proof}
    Taking the derivative with respect to $t$, we get
    \beqn
    \frac{\partial}{\partial t}\Big(\int_M\; v^{q}\;\oo_\varphi^n\Big)=\int_M \;\Big(q v^{q-1}\dot{v}+v^{q}\Delta_{\varphi} R\Big)\;\oo_{\varphi}^n\label{eq:Q}.
    \eeqn
    Putting $\dot{v}=-\alpha(\dot{F}+\lambda\dot{\varphi})v+e^{-\alpha(F+\lambda\varphi)}\Delta_g R$    into (\ref{eq:Q}), we have
    \beqn
    \frac{\partial}{\partial t}\Big(\int_M \;v^{q}\;\oo_\varphi^n\Big)&=&\int_M\;\Big( -\alpha qv^q(\dot{F}+\lambda\dot{\varphi})+qv^{q-1}e^{-\alpha(F+\lambda\varphi)}\Delta_g R+v^q\Delta_{\varphi}R\Big)\;\oo_{\varphi}^n\no\\
&=&\int_M\;\Big( (1-\alpha q)v^q\Delta_{\varphi}R-\alpha qv^q\lambda(R-\un R)+qv^{q-1}e^{-\alpha(F+\lambda\varphi)}\Delta_g R\Big)\;\oo_{\varphi}^n.\no\\\label{eq:2.5}
   \eeqn
  Let  $p,r$ and $b$ be the constants satisfying (\ref{eq:2.4}).
Integrating both sides of (\ref{eq:2.5}) with respect to $t$ and using the H\"older inequality, we have
\beqn
\int_M \;v^{q}\oo_{\varphi}^n\Big|_t-\int_M v^{q}\;\oo_{\varphi}^n\Big|_0&\leq& Cq\Big(\int_{0}^{T}\;dt\int_{M}\;v^{qr}\;\oo_{\varphi}^n\Big)^{\frac 1r}+Cq\Big(\int_{0}^{T}\;dt\int_{M}\;v^{qb}\;\oo_{\varphi}^n\Big)^{\frac 1b}\no\\&+&Cq\int_{0}^{T}\;dt\int_{M}\;v^{q-1}|\Delta_g R|\;\oo_{\varphi}^n, \label{eq:A3}
\eeqn
where $C$ depends on $\al, \|\varphi\|_\infty,\|F\|_\infty, A_{R,2p}$ and $B_{R,p+1}$. Using the inequality  $|\Delta_g R|\leq |\nabla^2R|_{\varphi}(n+\Delta_g\varphi)$, we have
\beqn
\int_{0}^{T}dt\int_{M}v^{q-1}|\Delta_g R|\,\oo_{\varphi}^n\leq C(\|\varphi\|_\infty,\|F\|_\infty) \Big(\int_{0}^{T}dt\int_{M}v^{2q}\;\oo_{\varphi}^n\Big)^{\frac 12}\Big(\int_{0}^{T}dt\int_{M}|\nabla^2R|_\varphi^2\,\oo_{\varphi}^n\Big)^{\frac 12}.\no\\
\label{eq:34}
\eeqn
Note that \beq \int_{0}^{T}\;dt\int_{M}\;|\nabla^2R|_{\varphi}^2\;\oo_{\varphi}^n=
\int_{0}^{T}\;dt\int_{M}\;|\Delta_{\varphi}R|^2\;\oo_{\varphi}^n\label{eq:A4}\eeq and $B_{R,p+1}^n$ is bounded with $p> n+1\geq 2$. Combining (\ref{eq:A3})-(\ref{eq:A4}),  we have the inequality (\ref{eq:0.6}).
\end{proof}

Combining Lemma \ref{lem:interD}, Lemma \ref{lem:3.7} with Lemma \ref{lem:interF}, we have the result.
\begin{lem}\label{theo:main3}
Under the assumption that $Q_F,A_{R,2p}$ and $B_{R,p+1}$ are bounded with $p> n$, for any $s\geq 1$, there exists a constant $C$ depending on $n,s,\oo_g,Q_F,A_{R,2p},B_{R,p+1}$ and $\varphi(0)$ and $T$ such that
    \beq
    \int_{0}^{T}\;dt\int_{M}\;(n+\Delta_g \varphi)^s\;\oo_{\varphi}^n\leq C. \label{eq:0.7}
    \eeq
\end{lem}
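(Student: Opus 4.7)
The plan is to perform a Moser-type iteration to bound $\Phi_s := \int_0^T\int_M v^s\,\omega_\varphi^n\,dt$ for every $s \geq 1$, where $v = e^{-\alpha(F+\lambda\varphi)}(n+\Delta_g\varphi)$ is the weighted quantity from Lemma~\ref{lem:interD}. Since $\|F\|_\infty$ and $\|\varphi\|_\infty$ are controlled by Theorem~\ref{theo:main1}, $v$ is comparable to $n+\Delta_g\varphi$, so bounding $\Phi_s$ is equivalent to \eqref{eq:0.7}. The three ingredients are Lemma~\ref{lem:interD} (integrated in $t$: it bounds $\int_0^T\int_M|\nabla v^{q/2}|_\varphi^2 + c\,\Phi_{q+1/(n-1)}$ by $\int_0^T\int_M\tilde R\, v^q$), Lemma~\ref{lem:3.7} (controlling $\sup_t \int_M v^q\,\omega_\varphi^n$), and the parabolic Sobolev inequality Lemma~\ref{lem:interF} (upgrading $W^{1,2}$-plus-$\sup_t L^\kappa$ control to space-time $L^\beta$ control with $\beta$ close to $2n/(n-1) > 2$).

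First I would verify the base case $\Phi_1 < \infty$: the identity $(n+\Delta_g\varphi)\omega_g^n = n\,\omega_g^{n-1}\wedge\omega_\varphi$ shows that $\int_M(n+\Delta_g\varphi)\omega_g^n$ is a cohomological constant, so the $L^\infty$-bound on $F$ yields $\int_M(n+\Delta_g\varphi)\omega_\varphi^n \leq C$ uniformly in $t$, hence $\Phi_1 \leq CT$. For the induction, assuming $\Phi_s < \infty$ for all $s \leq S_k$, I would establish the same for $s \leq S_{k+1} = \rho S_k$ with $\rho > 1$. Given an admissible exponent $q$, apply Lemma~\ref{lem:interD} and estimate the right-hand side $\int\!\!\int\tilde R\, v^q$ by H\"older against the bound $A_{R,2p}$, producing a contribution $\Phi_{qr}^{1/r}$ with $r = 2p/(2p-1)$; since $p > n$ forces $r - 1 = 1/(2p-1) \leq 1/(n-1)$, this term is interpolated between $\Phi_q$ and $\Phi_{q+1/(n-1)}$ and absorbed into the left-hand side by Young's inequality. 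Next, apply Lemma~\ref{lem:3.7} to control $\sup_t\int_M v^q\omega_\varphi^n$, then combine with Lemma~\ref{lem:interF} applied to $u = v^{q/2}$ to obtain a bound on $\Phi_{q\beta/2}$ in terms of $\Phi_{qr},\Phi_{qb},\Phi_{2q}$ and the gradient energy. Each $\Phi_{qs_i}$ on the right is H\"older-interpolated between $\Phi_q$ (known by inductive hypothesis) and the output $\Phi_{q\beta/2}$, and Young's inequality absorbs the $\Phi_{q\beta/2}$ part into the left, closing the bound with multiplicative growth $\rho = \beta/2 > 1$, so that after finitely many stages every $s \geq 1$ is covered.

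The main obstacle is the $\Phi_{2q}^{1/2}$ term arising from the $L^2$-H\"older bound on $\int\!\!\int v^{q-1}|\Delta_g R|$ inside Lemma~\ref{lem:3.7}: since $\beta/2 < n/(n-1) \leq 2$ for $n \geq 2$, the exponent $2q$ exceeds the output exponent $q\beta/2$, so direct interpolation between $\Phi_q$ and $\Phi_{q\beta/2}$ is unavailable. To circumvent this, I would re-do the corresponding H\"older estimate inside the proof of Lemma~\ref{lem:3.7} using the full $L^{p+1}$ bound $B_{R,p+1}$: writing $|\Delta_g R| \leq C|\nabla^2 R|_\varphi\, v$ and applying H\"older with the conjugate pair $((p+1)/p,\,p+1)$ in place of $(2,2)$, one obtains $\Phi_{q(p+1)/p}^{p/(p+1)}$ in place of $\Phi_{2q}^{1/2}$. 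The comparison $(p+1)/p < r = 2p/(2p-1) < n/(n-1)$, valid whenever $p > n$, then places the new exponent $q(p+1)/p$ strictly below both $q+1/(n-1)$ and $q\beta/2$ (once $\beta$ is taken close enough to $2n/(n-1)$), so the H\"older-plus-Young absorption argument closes the iteration at every stage and yields $\Phi_s < \infty$ for every $s \geq 1$.
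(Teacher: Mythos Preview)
Your overall architecture matches the paper's: combine the Chen--Cheng energy inequality (Lemma~\ref{lem:interD}), the parabolic Sobolev inequality (Lemma~\ref{lem:interF}), and the time-slice control from Lemma~\ref{lem:3.7}, then iterate. The base case and the reduction to bounding $\Phi_s$ for $v$ are fine.

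The genuine gap is in your proposed resolution of the $\Phi_{2q}^{1/2}$ obstruction. You suggest replacing the $(2,2)$ H\"older split inside Lemma~\ref{lem:3.7} by $((p+1)/p,\,p+1)$, which would require a bound on $\int_0^T\!\int_M |\nabla^2 R|_\varphi^{p+1}\,\omega_\varphi^n\,dt$. But the hypothesis $B_{R,p+1}$ only controls $\int_0^T\!\int_M |\Delta_\varphi R|^{p+1}$; the passage from $\Delta_\varphi R$ to the full complex Hessian in the paper's identity~(\ref{eq:A4}),
\[
\int_M |\nabla^2 R|_\varphi^2\,\omega_\varphi^n = \int_M |\Delta_\varphi R|^2\,\omega_\varphi^n,
\]
comes from integration by parts and is specific to the exponent~$2$; there is no such inequality for exponent $p+1>2$. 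Hence the factor $(\int |\nabla^2 R|_\varphi^{p+1})^{1/(p+1)}$ is not available from the assumptions, and your iteration does not close.

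The paper's resolution is different and avoids this issue entirely: rather than changing the H\"older pair, it exploits the freedom $\kappa<2$ in Lemma~\ref{lem:interF} and applies Lemma~\ref{lem:3.7} at the exponent $q\kappa/2$ instead of $q$. The troublesome ``$2q$'' term then becomes $q\kappa$, and choosing $\kappa<\frac{2}{1+2/\gamma}$ (equivalently $2\kappa<\beta$) forces every exponent appearing on the right---$q\kappa r/2$, $q\kappa b/2$, $q\kappa$, and $qr$---to lie at or below $q\max\{r,\kappa\}<q\beta/2$. The $L^2$ Hessian identity is retained, and the iteration closes with ratio $\beta/(2\max\{r,\kappa\})>1$. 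Incidentally, your absorption of $\Phi_{qr}^{1/r}$ into the $\Phi_{q+1/(n-1)}$ term on the left of Lemma~\ref{lem:interD} also breaks for large $q$: the interpolation $q\le qr\le q+\tfrac{1}{n-1}$ needs $q(r-1)\le \tfrac{1}{n-1}$, i.e.\ $q\le (2p-1)/(n-1)$, not merely $r-1\le \tfrac{1}{n-1}$. The paper never uses that term---it simply drops it and lets the parabolic Sobolev step carry the iteration.
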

\begin{proof}
    By Lemma \ref{lem:interF} and Lemma \ref{lem:interD}, for any $q> 1$ we have
    \beqn
    &&\int_{0}^{T}dt\int_Mv^{\frac{\beta q}{2}}\;\oo_{\varphi}^n\no\\&& \leq C(n,\oo_g,\|F\|_\infty,\gamma)\sup_{[0,T)}\|v^{\frac q2}\|_{\kappa,t}^{(1-\frac 2\gamma)\kappa}\int_{0}^{T}dt\int_M(|\nabla v^{\frac q2}|_\varphi^2+|v|^q)\;\oo_{\varphi}^n\no\\
     &&\leq C\frac{q^2}{3(q-1)}\sup_{t\in[0,T)}\|v^{\frac q2}\|_{\kappa,t}^{(1-\frac 2\gamma)\kappa}\int_{0}^{T}dt\int_M(\td R+1)v^q\,\oo_{\varphi}^n\no\\
    &&\leq C(n,\oo_g,\|F\|_\infty,\gamma,A_{R,2p})\frac{q^2}{3(q-1)}\sup_{t\in[0,T)}\|v^{\frac q2}\|_{\kappa,t}^{(1-\frac 2\gamma)\kappa}\Big(\int_{0}^{T}\;dt\int_M\;v^{qr}\;\oo_{\varphi}^n\Big)^{\frac 1r},\no\\\label{eq:B1}
\eeqn where $r$ is the constant satisfying (\ref{eq:2.4}).
Note that $\|v^{\frac q2}\|_{\kappa,t}^{(1-\frac 2\gamma)\kappa}=\|v\|_{\frac {q\kappa}2,t}^{(\frac12-\frac1\gamma)\kappa q}$.  By Lemma \ref{lem:3.7} we have
\beqn
\|v^{\frac q2}\|_{\kappa,t}^{(\frac12-\frac1\gamma)\kappa }&\leq& \Big(\int_M\;v^{\frac{q\kappa}{2}}\;\oo_{\varphi}^n\Big|_{t=0}+\frac{Cq\kappa}{2}\Big(\int_{0}^{T}\;dt\int_{M}\;v^{\frac{q\kappa r}{2}}\;\oo_{\varphi}^n\Big)^{\frac 1r}\no\\&\quad&+\frac{Cq\kappa}{2}\Big(\int_{0}^{T}\;dt\int_{M}\;v^{\frac{q\kappa b}{2}}\;\oo_{\varphi}^n\Big)^{\frac 1b}+\frac{Cq\kappa}{2}\Big(\int_{0}^{T}\;dt\int_{M}\;v^{q\kappa}\;\oo_{\varphi}^n\Big)^{\frac 12}\Big)^{1-\frac 2\gamma}.\no
\eeqn
Taking the $\frac{\beta q}{2}$-root in (\ref{eq:B1}), we have
\beqn &&
\|v\|_{\frac{\beta q}{2}}\leq C^{\frac{2}{\beta q}}\Big(\frac{q^2}{3(q-1)}\Big)^{\frac{2}{\beta q}}\Big(C
+\frac{Cq\kappa}{2}\Big(\int_{0}^{T}\;dt\int_{M}\;v^{\frac{q\kappa r}{2}}\;\oo_{\varphi}^n\Big)^{\frac 1r}
\no\\&\quad&+\frac{Cq\kappa}{2}\Big(\int_{0}^{T}\;dt\int_{M}\;v^{\frac{q\kappa b}{2}}\;\oo_{\varphi}^n\Big)^{\frac 1b}+\frac{Cq\kappa}{2}\Big(\int_{0}^{T}\;dt\int_{M}\;v^{q\kappa}\;\oo_{\varphi}^n\Big)^{\frac 12}\Big)^{\frac{2\theta}{q\kappa}}\|v\|_{qr}^{\frac{2}{\beta}},
\eeqn
where $C$ depends on $\al, n,\oo_g,\|\varphi\|_\infty,\|F\|_\infty,A_{R,2p},B_{R,p+1},\gamma$ and $\varphi(0)$. Since $p> n$, we have that $r=\frac{2p}{2p-1}< \frac{2n}{2n-1}< 2$ and $b=\frac{p+1}{p}<2$.  We choose $\beta$ and $\kappa$ such that
\beqn
\frac{\beta}{2}> \max\{\kappa,r\},\label{eq:3.41}
\eeqn
or equivalently,
\beqn
\frac{2r-2}{1-\frac 2\gamma}<\kappa<\frac{2}{1+\frac 2\gamma}.\label{eq:B8}
\eeqn
Since $r< \frac{2n}{2n-1}$, we can choose $\gamma$ close to $\frac{2n}{n-1}$ such that $\frac{2r-2}{1-\frac 2\gamma}<\frac{2}{1+\frac 2\gamma}$. For such $\kappa$, $\gamma$ and large $q$ with $q\kappa> 1$, we have
\beqn
\|v\|_{\frac{\beta q}{2}}&\leq&C^{\frac{2}{\beta q}}\Big(\frac{q^2}{3(q-1)}\Big)^{\frac{2}{\beta q}}\Big(C+Cq\kappa\|v\|_{{q\kappa}}^{\frac{q\kappa}{2}}\Big)^{\frac{2\theta}{q\kappa}}\|v\|_{{qr}}^{\frac{2}{\beta}}\no\\
&\leq&C^{\frac{2}{\beta q}}\Big(\frac{q^2}{3(q-1)}\Big)^{\frac{2}{\beta q}}C^{\frac{2\theta}{q\kappa}}(q\kappa)^{\frac{2\theta}{q\kappa}}
\|v\|_{{q\max\{r,\kappa\}}},
\eeqn
where $C$ depends on $\al, \oo_g,\kappa,\gamma,\|\varphi\|_\infty,\|F\|_\infty,
\varphi(0),A_{R,2p},B_{R,p+1}$ and in the last inequality we used the fact that
\beqs
v=e^{-\alpha(F+\lambda\varphi)}(n+\Delta_g\varphi)\geq C(\alpha,\|\varphi\|_\infty,\|F\|_\infty)\frac{1}{n}e^{\frac{F}{n}}\geq C(n,\alpha,\|\varphi\|_\infty,\|F\|_\infty).
\eeqs
Let $\al=2q.$ By the iteration argument there exists $q_0> 1$ such that for any $q> q_0$ we have
\beqn
\|v\|_{q}\leq C(n,\oo_g,q,\kappa,\gamma,\|F\|_\infty,\|\varphi\|_\infty,A_{R,2p},B_{R,p+1},\varphi(0))\|v\|_{q_0}.\label{eq:0.8}
\eeqn
Since $\|v\|_{q_0}\leq\ee\|v\|_{q}+C(\ee)\|v\|_{1}$, we have
$
\|v\|_{q}\leq C\|v\|_{1}
$
for small $\ee$. Now
\beqs
\|v\|_{1}&=&\int_{0}^{T}dt\int_M\;e^{-\alpha(F+\lambda\varphi)}
(n+\Delta_g\varphi)\;\oo_{\varphi}^n\\&\leq & C(q,\|F\|_\infty,\|\varphi\|_\infty)\int_{0}^{T}\;dt\int_M(n+\Delta_g\varphi)\;\oo_g^n
\\&\leq& C(n, q,\|\varphi\|_\infty,\|F\|_\infty,T).\no
\eeqs Combining this with (\ref{eq:0.8}), we have the inequality (\ref{eq:0.7}).
The lemma is proved.
\end{proof}

\subsection{Estimates of $\|\nabla F\|$}
In this subsection we show that the space-time norm $\|\nabla F\|_{{2s}}$ is bounded for any $s< 2p $. Note that we assumed the condition that $p>n$ in the assumption of Theorem \ref{theo:1.1}. The argument uses the parabolic Moser iteration and is based on Chen-Cheng \cite{[CC3]} and Li-Zhang-Zheng \cite{[LZZ]}.

\begin{lem}\label{lem:F}
    Under the assumption of Lemma \ref{theo:main3}, for any $s<2p$ there exists a constant $C$ depending on $n,s,\oo_g,Q_F,A_{R,2p},B_{R,p+1},\varphi(0)$ and $T$ such that
    \beqn
    \int_{0}^{T}\;dt\int_{M}\;|\Na F|_{\varphi}^{2s}\;\oo_{\varphi}^n \leq C.
    \eeqn

\end{lem}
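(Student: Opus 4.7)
The plan is to control $\int_0^T\int_M|\nabla F|^{2s}_\varphi\,\oo_\varphi^n\,dt$ by integration by parts combined with the K\"ahler Bochner identity, once an $L^{2p}$ bound on $\Delta_\varphi F$ is in hand.

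First, I would show $\|\Delta_\varphi F\|_{L^{2p}(M\times[0,T),\oo_\varphi^n dt)}\leq C$. From $\oo_\varphi^n=e^F\oo_g^n$ one has $Ric(\oo_\varphi)=Ric(\oo_g)-\pbp F$, so tracing with $g_\varphi$ gives $\Delta_\varphi F=\tr_\varphi Ric(\oo_g)-R$. The elementary eigenvalue inequality $\tr_\varphi\oo_g\leq C(n)e^{-F}(n+\Delta_g\varphi)^{n-1}$, together with Theorem \ref{theo:main1} ($\|F\|_\infty$ bounded) and Lemma \ref{theo:main3} (the space-time $L^q$ bound on $n+\Delta_g\varphi$ for arbitrary $q$), controls the first summand in $L^{2p}$, while the assumption that $A_{R,2p}$ is bounded handles the second.

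Next, integration by parts gives
\[
\int_M|\nabla F|^{2s}_\varphi\,\oo_\varphi^n=-\int_M F|\nabla F|^{2s-2}_\varphi\Delta_\varphi F\,\oo_\varphi^n-(s-1)\int_M F|\nabla F|^{2s-4}_\varphi\langle\nabla|\nabla F|^2_\varphi,\nabla F\rangle_\varphi\,\oo_\varphi^n.
\]
The principal term is bounded by $\|F\|_\infty\|\nabla F\|_{L^{2s}}^{2s-2}\|\Delta_\varphi F\|_{L^s}$ via H\"older. To handle the second-order term, which intrinsically couples $|\nabla^2 F|$ with the target integral, I would multiply the K\"ahler Bochner identity
\[
\Delta_\varphi|\nabla F|^2_\varphi=2|\nabla^2 F|^2_\varphi+2\langle\nabla F,\nabla\Delta_\varphi F\rangle_\varphi+2\,Ric(\oo_\varphi)(\nabla F,\nabla F)
\]
by $|\nabla F|^{2s-4}_\varphi$ and integrate. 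Integration by parts on the Laplacian produces the favorable nonpositive term $-(s-2)\int|\nabla F|^{2s-6}_\varphi\bigl|\nabla|\nabla F|^2_\varphi\bigr|^2_\varphi$ for $s>2$; a further integration by parts on $\langle\nabla F,\nabla\Delta_\varphi F\rangle_\varphi$ produces $(\Delta_\varphi F)^2$ weighted by $|\nabla F|^{2s-4}_\varphi$, which is controlled in $L^1$ by H\"older using the $L^{2p}$ bound from the first step; and the Ricci term is rewritten using $Ric(\oo_\varphi)=Ric(\oo_g)-\pbp F$, with $|Ric(\oo_g)|_\varphi\leq C(n+\Delta_g\varphi)^{n-1}$ bounded via Lemma \ref{theo:main3}, and the unfavorable $|\pbp F|_\varphi\leq|\nabla^2 F|_\varphi$ contribution absorbed via Young's inequality with a small parameter.

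Combining the above and integrating in time yields an inequality of the form
\[
\int_0^T\!\!\int_M|\nabla F|^{2s}_\varphi\,\oo_\varphi^n\,dt\leq\tfrac12\int_0^T\!\!\int_M|\nabla F|^{2s}_\varphi\,\oo_\varphi^n\,dt+C,
\]
with $C$ depending on $n,s,\oo_g,Q_F,A_{R,2p},B_{R,p+1},\varphi(0),T$, which closes the estimate after absorbing the first term. The strict inequality $s<2p$ reflects the H\"older pairing $\|\nabla F\|_{L^{2s}}^{2s-2}\|\Delta_\varphi F\|_{L^s}$, which requires $s<2p$ in order to leave the slack needed in the absorption argument. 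The main obstacle is the tight coupling between $\int|\nabla F|^{2s}_\varphi$ and the second-derivative terms in the Bochner step, which forces a delicate balance of Young's inequality parameters so that the unfavorable $|\nabla^2 F|^2$ contributions are absorbed without losing the main estimate.
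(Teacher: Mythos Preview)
Your proposal has a genuine gap in the handling of the Ricci term in the Bochner identity. Writing $Ric(\oo_\varphi)=Ric(\oo_g)-\pbp F$, the contribution $-\pbp F(\nabla F,\nabla F)$, after weighting by $|\nabla F|_\varphi^{2s-4}$ and integrating, is bounded by $\int_M|\nabla F|_\varphi^{2s-2}|\nabla\bar\nabla F|_\varphi\,\oo_\varphi^n$. Young's inequality on this term gives $\epsilon\int|\nabla F|_\varphi^{2s-4}|\nabla\bar\nabla F|_\varphi^2+C_\epsilon\int|\nabla F|_\varphi^{2s}$: the small $\epsilon$ lets you absorb the first piece into the good Hessian term from Bochner, but the price is a \emph{large} coefficient $C_\epsilon$ in front of $X=\int_0^T\int_M|\nabla F|_\varphi^{2s}$. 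When you feed this back through the initial integration by parts (whose second term already carries a factor $\|F\|_\infty$), the resulting inequality has the shape $X\leq C+C'X$ with $C'$ depending on $\|F\|_\infty$ and $(s-2)^{-1}$; there is no mechanism forcing $C'<1$, so the absorption you claim, $X\leq\tfrac12 X+C$, does not follow. At best the argument closes only for $s$ large relative to $\|F\|_\infty$, not for arbitrary $s<2p$.

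The paper's proof avoids this obstruction by working not with $|\nabla F|_\varphi^2$ but with the Chen--Cheng auxiliary function $w=e^{F/2}|\nabla F|_\varphi^2+1$. The exponential weight is precisely what cancels the bad $\pbp F$ Ricci contribution, yielding the clean differential inequality $\Delta_\varphi w\geq 2e^{F/2}\langle\nabla F,\nabla\Delta_\varphi F\rangle_\varphi-C\td v^{\,n-1}w-\tfrac12 Rw+\tfrac12 R$ (cf.\ \cite{[CC3]}, \cite[(2.27)]{[LZZ]}). From there the argument is a genuine space--time Moser iteration: the paper combines this elliptic inequality with a time-derivative estimate for $\|w^q\|_{\kappa,t}$ (Lemma~\ref{lem:G}, which uses the Calabi flow equation and the $L^{2p}$ bound on $\Delta_\varphi F$ you derived), the parabolic Sobolev inequality (Lemma~\ref{lem:interF}), and a careful choice of exponents (Lemma~\ref{lem:H}) to iterate $\|w\|_{\beta(q+\frac12)}\leq C\|w\|_{c(q+\frac12)}$ up toward $\|w\|_s$ for $s<2p$, finally reducing to $\|w\|_1$ which is controlled by $\int_0^T\int_M F\Delta_\varphi F$. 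Your first step (the $L^{2p}$ bound on $\Delta_\varphi F$) is correct and is used in the paper as well, but the remainder of your sketch would need to be replaced by this machinery.
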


Before proving Lemma \ref{lem:F}, we collect the conditions on the constants. Let $n, p, \ga,  \ka, \bb$ and $ \te$ be the constants in Section 2, and $r, b$  the constants
 in the proof of  Lemma \ref{lem:3.7} and Lemma \ref{theo:main3}  satisfying
 \beq
\frac 1{2p}+\frac 1r=1,\quad \frac 1b+\frac 1{p+1}=1.  \label{eq:3.48}
\eeq
We define the constants $a, d$ by
\beqn
&&\frac 1a+\frac 1{2p}+\frac 1{p+1}=1,
 \quad \frac 1d+\frac 1p=1.\label{eq:3.49}
\eeqn

To show Lemma \ref{lem:F}, we first estimate the $L^q$ norm of $|\Na F|$ on each time slices by using the equation (\ref{eq000}) of Calabi flow.

\begin{lem}\label{lem:G}
    Let $w=e^{\frac F2}|\Na F|_{\varphi}^2+1$ and $z=w^q$ with $q> \frac 1\kappa$. We have
    \beqn &&
\|z\|_{\kappa,t}^{\kappa}-\|z\|_{\kappa,0}^\kappa\leq Cq\kappa\|z\|_{b\kappa}^\kappa+Cq\kappa(q\kappa -\frac12)\Big(\frac{2q\kappa}{q\kappa -1}\Big)^{\frac12}\|z\|_{\frac{d(2q\kappa -2)}{q}}^{\frac{q\kappa -1}{q}}\no\\&&+Cq\kappa(q\kappa -\frac12)(q\kappa-1)^{-\frac12}\|z\|_{\frac{r(2q\kappa-1)}{q}}^{\frac{2q\kappa-1}{2q}}+Cq\kappa\|z\|_{\frac{a(q\kappa -1)}{q}}^{\frac{q\kappa -1}{q}}+Cq\kappa\|z\|_{2\kappa}^\kappa,
    \eeqn
where $C$ only depends on $n,p,\oo_g,Q_F,A_{R,2p}$, $B_{R,p+1},\varphi(0)$ and   $T$. Here $p$,$b$,$r$,$a$ and $d$ satisfy the   conditions (\ref{eq:3.48})-(\ref{eq:3.49}).

\end{lem}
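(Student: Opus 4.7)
The plan is to differentiate $\|z\|_{\kappa,t}^\kappa=\int_M w^{q\kappa}\,\omega_\varphi^n$ in time and integrate back from $0$ to $t$. Along the Calabi flow \eqref{eq000} one has $\partial_t F=\Delta_\varphi R$, $\partial_t\omega_\varphi^n=\Delta_\varphi R\cdot\omega_\varphi^n$, and (from $\partial_t\varphi=R-\underline R$) $\partial_tg^{i\bar j}_\varphi=-\nabla^i\nabla^{\bar j}R$. Differentiating $w=e^{F/2}|\nabla F|_\varphi^2+1$ yields
\begin{equation*}
\partial_t w = \tfrac12 e^{F/2}\Delta_\varphi R\cdot|\nabla F|_\varphi^2 - e^{F/2}R_{k\bar l}F^kF^{\bar l} + 2e^{F/2}\mathrm{Re}\langle\nabla\Delta_\varphi R,\nabla F\rangle_\varphi,
\end{equation*}
so that $\|z\|_{\kappa,t}^\kappa-\|z\|_{\kappa,0}^\kappa$ equals the space-time integral of $q\kappa w^{q\kappa-1}\partial_tw + w^{q\kappa}\Delta_\varphi R$ over $M\times[0,t]$. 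The task then reduces to estimating four integrals whose integrands involve, respectively, $w^{q\kappa}\Delta_\varphi R$, $w^{q\kappa-1}e^{F/2}R_{k\bar l}F^kF^{\bar l}$, and $w^{q\kappa-1}e^{F/2}\langle\nabla\Delta_\varphi R,\nabla F\rangle_\varphi$.

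\textbf{Easy contributions.} Using $e^{F/2}|\nabla F|_\varphi^2\le w$, the terms proportional to $\Delta_\varphi R\cdot|\nabla F|_\varphi^2$ and to $w^{q\kappa}\Delta_\varphi R$ both reduce to $\int w^{q\kappa}|\Delta_\varphi R|$, and H\"older in space-time with the conjugate pair $(b,p+1)$ from \eqref{eq:2.4} gives the $Cq\kappa\|z\|_{b\kappa}^\kappa$ contribution, using the bound $B_{R,p+1}$. For the Hessian-of-$R$ term, the pointwise estimate $|R_{k\bar l}F^kF^{\bar l}|\le|\nabla^2R|_\varphi|\nabla F|_\varphi^2$ followed by $e^{F/2}|\nabla F|_\varphi^2\le w$ produces $Cq\kappa\int w^{q\kappa}|\nabla^2R|_\varphi$, and Cauchy--Schwarz combined with the identity $\int|\nabla^2R|_\varphi^2=\int(\Delta_\varphi R)^2$ used in \eqref{eq:A4} yields the $Cq\kappa\|z\|_{2\kappa}^\kappa$ term.

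\textbf{The third-derivative term.} The delicate part is
\begin{equation*}
J=2q\kappa\int_0^t\!\int_Mw^{q\kappa-1}e^{F/2}\langle\nabla\Delta_\varphi R,\nabla F\rangle_\varphi\,\omega_\varphi^n\,ds,
\end{equation*}
which cannot be bounded directly since $\nabla\Delta_\varphi R$ is not a priori controlled. I would integrate by parts to shift $\nabla$ off $\Delta_\varphi R$, obtaining $J=-2q\kappa\int_0^t\!\int_M\Delta_\varphi R\cdot(w^{q\kappa-1}e^{F/2}\Delta_\varphi F+\langle\nabla(w^{q\kappa-1}e^{F/2}),\nabla F\rangle_\varphi)\,\omega_\varphi^n\,ds$. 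Rewriting $\Delta_\varphi F=-R+\mathrm{tr}_\varphi\mathrm{Ric}(\omega_g)$ and bounding $|\mathrm{tr}_\varphi\mathrm{Ric}(\omega_g)|\le C(1+\Delta_g\varphi)$, triple H\"older with the exponents $(a,2p,p+1)$ satisfying $\frac1a+\frac1{2p}+\frac1{p+1}=1$, together with Lemma~\ref{theo:main3} to absorb the $\Delta_g\varphi$ factor, gives the $Cq\kappa\|z\|_{a(q\kappa-1)/q}^{(q\kappa-1)/q}$ contribution. Expanding $\nabla(w^{q\kappa-1}e^{F/2})=(q\kappa-1)w^{q\kappa-2}e^{F/2}\nabla w+\tfrac12 w^{q\kappa-1}e^{F/2}\nabla F$, the $\nabla F$ piece gives another $w^{q\kappa}|\Delta_\varphi R|$ integrand already absorbed into the $\|z\|_{b\kappa}^\kappa$ bound; for the remaining piece I would use the key identity $w^{q\kappa-2}\nabla w=(q\kappa-1/2)^{-1}w^{-1/2}\nabla(w^{q\kappa-1/2})$, which extracts the factor $q\kappa-1/2$ and converts the gradient of $w$ into a gradient of $w^{q\kappa-1/2}$. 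Cauchy--Schwarz on this against $\Delta_\varphi R$, followed by H\"older $(d,p)$ with $\frac1d+\frac1p=1$, yields the $\|z\|_{d(2q\kappa-2)/q}^{(q\kappa-1)/q}$ contribution with the combinatorial factor $(2q\kappa/(q\kappa-1))^{1/2}$ appearing when the $w^{-1/2}$ is reabsorbed; similarly, pairing with $R$ via H\"older $(r,2p)$ from \eqref{eq:2.4} produces the $\|z\|_{r(2q\kappa-1)/q}^{(2q\kappa-1)/(2q)}$ term with the factor $(q\kappa-1)^{-1/2}$.

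\textbf{Main obstacle.} The chief technical challenge is the precise exponent bookkeeping in this last step: each integration by parts replaces $q\kappa$ by a slightly smaller factor ($q\kappa-1$ or $q\kappa-1/2$), and each H\"older decomposition must be chosen so that the $L^s$-norm of $w$ that arises exactly matches one of the norms on the right-hand side. The rewriting $w^{q\kappa-2}\nabla w=(q\kappa-1/2)^{-1}w^{-1/2}\nabla(w^{q\kappa-1/2})$ is the key algebraic trick: it simultaneously introduces the factor $q\kappa-1/2$ and allows the eventual $|\nabla(w^{q\kappa-1/2})|$ to be absorbed by Cauchy--Schwarz against the factors of $\Delta_\varphi R$ or $R$ without ever requiring a standalone bound on $|\nabla w^{q\kappa-1/2}|$ (which we do not control). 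All other terms, including the $w^{1/2}$ appearing from the rewriting, are controlled by the $L^\infty$ bounds on $F$ and $\varphi$ from Theorem~\ref{theo:main1} and the space-time bounds from Lemma~\ref{theo:main3}.
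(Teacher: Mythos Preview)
Your overall strategy---differentiating $\|z\|_{\kappa,t}^\kappa$ in time, computing $\partial_t w$, and splitting into the $\Delta_\varphi R$, Hessian-of-$R$, and $\nabla\Delta_\varphi R$ pieces---is exactly right, and your treatment of the easy contributions (the $\|z\|_{b\kappa}^\kappa$ and $\|z\|_{2\kappa}^\kappa$ terms) matches the paper's. The integration by parts on the $\nabla\Delta_\varphi R$ term and the triple H\"older with exponents $(a,2p,p+1)$ for the $\Delta_\varphi F$ piece are also correct.

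There is, however, a genuine gap in your handling of the $\nabla w$ piece. After the integration by parts you face an integrand of the form $(q\kappa-1)w^{q\kappa-2}e^{F/2}\langle\nabla w,\nabla F\rangle_\varphi\,\Delta_\varphi R$, which (using $e^{F/4}|\nabla F|_\varphi\le w^{1/2}$ and $\|F\|_\infty<\infty$) is bounded by a constant times $|\nabla w^{q\kappa-1/2}|_\varphi\,|\Delta_\varphi R|$. You assert that Cauchy--Schwarz here avoids any standalone bound on $|\nabla w^{q\kappa-1/2}|$, but this is not so: Cauchy--Schwarz on the space-time integral produces exactly
\[
\Big(\int_0^T\!\!\int_M|\nabla w^{q\kappa-1/2}|_\varphi^2\,\omega_\varphi^n\,dt\Big)^{1/2}\Big(\int_0^T\!\!\int_M|\Delta_\varphi R|^2\,\omega_\varphi^n\,dt\Big)^{1/2},
\]
and the first factor must be controlled. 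Your sketch gives no mechanism for this, and your own parenthetical ``which we do not control'' is precisely the missing step. Moreover, your mention of ``pairing with $R$ via H\"older $(r,2p)$'' is incoherent at this stage: only $\Delta_\varphi R$ appears in the integrand; $R$ by itself has not entered.

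The paper closes this gap by invoking the elliptic estimate (2.31) of \cite{[LZZ]}, reproduced as \eqref{eq:1.2}: it bounds $\int_0^T\!\int_M|\nabla w^{q+1/2}|_\varphi^2$ by a combination of $\int w^{2q}R^2$, $\int w^{2q}\tilde v^{2n-2}$, $\int w^{2q+1}|R|$, and $\int w^{2q+1}\tilde v^{n-1}$. It is H\"older applied to \emph{these} integrals---with the pairs $(d,p)$ and $(r,2p)$, together with the $L^s$ bounds on $\tilde v=n+\Delta_g\varphi$ from Lemma~\ref{theo:main3}---that yields the $\|z\|_{d(2q\kappa-2)/q}$ and $\|z\|_{r(2q\kappa-1)/q}$ terms with their combinatorial prefactors $(2q\kappa/(q\kappa-1))^{1/2}$ and $(q\kappa-1)^{-1/2}$. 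This elliptic input, which ultimately comes from the differential inequality $\Delta_\varphi w\ge\cdots$ in \eqref{eq:B2}, is the ingredient you are missing.
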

\begin{proof}
Taking the derivative with respect to $t$ and using (\ref{eq000}), we get
\beqn
\frac{\partial}{\partial t}\|z\|_{\kappa,t}^{\kappa}=\frac{\partial}{\partial t}\int_{M}\;w^{\kappa q}\;\oo_{\varphi}^n=\int_{M}\;\Big(\kappa qw^{\kappa q-1}\dot{w}+w^{\kappa q}\Delta_{\varphi}R\Big)\;\oo_{\varphi}^n.\label{eq:A2c}
\eeqn
Note that
\beqn
\dot{w}&=&\frac{\partial}{\partial t}(e^{\frac 12 F}|\nabla F|_{\varphi}^2)\no\\&=&\frac 12 \dot{F}(w-1)+2e^{\frac 12 F}\text{Re}(\nabla\Delta_\varphi R\cdot_\varphi\nabla F)-e^{\frac 12 F}\nabla^2R(\nabla F,\nabla F), \label{eq:12}
\eeqn
Therefore, (\ref{eq:12}) and (\ref{eq:A2c}) imply that
\beqn
\|z\|_{\kappa,t}^{\kappa}-\|z\|_{\kappa,0}^\kappa&=&\int_{0}^{t}\;dt\int_{M}\;\Big(\kappa qw^{\kappa q-1}\Big(\frac 12 \dot{F}(w-1)+2e^{\frac 12 F}\text{Re}(\nabla\Delta_{\varphi}R\cdot_{\varphi}\nabla F)\no\\&\quad&-e^{\frac 12 F}\nabla^2R(\nabla F,\nabla F)\Big)+w^{\kappa q}\Delta_{\varphi}R\;\Big)\oo_{\varphi}^n\no\\&:=&I_0+I_1+I_2+I_3+I_4.
\eeqn
We will estimate each term $I_i$. By direct calculation, we have
\beqn
I_0&=&\int_{0}^{t}\;dt\int_{M}\;\frac {\kappa q}{2}z^\kappa\dot{F}\;\oo_{\varphi}^n\leq C q\kappa\|z\|_{b\kappa}^{\kappa},\\
I_1&=&-\int_{0}^{t}\;dt\int_M \;\frac{q\kappa}{2} w^{\kappa q-1}\dot{F}\;\oo_\varphi^n\leq
 C q\kappa\|z\|_{b\kappa}^{\kappa},\label{eq:3.52}
\eeqn
 where $C$ depends on $B_{R,p+1}$. Moreover, we have
\beqn
I_2&=&\int_{0}^{t}\;dt\int_{M}\;2\kappa qw^{\kappa q-1}e^{\frac F2}\text{Re}(\Na\Delta_{\varphi }R\cdot_{\varphi}\Na F)\,\oo_\varphi^n\no\\&=&-2q\kappa\int_{0}^{t}\;dt\int_{M}\;\Na(w^{\kappa q-1}e^{\frac F2}\Na F)\Delta_{\varphi} R\;\oo_{\varphi}^n\no\\
&=&-2q\kappa\int_{0}^{t}\;dt\int_{M}\;\Big((\kappa q-1)w^{\kappa q-2}\Na w\cdot_{\varphi}\Na F e^{\frac F2}\Delta_{\varphi }R\no\\
&&+\frac 12w^{\kappa q-1}e^{\frac 12F}|\Na F|_{\varphi}^2\Delta_{\varphi} R+w^{\kappa q-1}e^{\frac F2}\Delta_{\varphi}F\Delta_\varphi R\Big)\;\oo_{\varphi}^n.\no
\eeqn
 Therefore we have
\beqn
I_2&\leq& 2q\kappa\int_{0}^{T}\;dt\int_{M}\;\Big((\kappa q-1)w^{\kappa q-\frac32}|\Na w|_{\varphi}|\Delta_{\varphi}R|+\frac 12w^{\kappa q}|\Delta_{\varphi}R|\no\\&&+w^{\kappa q-1}e^{\frac F2}|\Delta_{\varphi}F||\Delta_{\varphi}R|\Big)\;\oo_{\varphi}^n\no\\
&\leq& 2q\kappa\int_{0}^{T}\;dt\int_{M}\frac{2q\kappa-2}{2q\kappa-1}|\Na w^{q\kappa-\frac1 2}|_{\varphi}|\Delta_{\varphi}R|\,\oo_{\varphi}^n+Cq\kappa||z||_{\kappa b}^{\kappa}+Cq\kappa||w^{\kappa q-1}||_{a}\no\\
&\leq &Cq\kappa\Big(\int_{0}^{T}\;dt\int_{M}|\;\Na w^{\kappa q-\frac 12}|_\varphi^2\;\oo_{\varphi}^n\Big)^{\frac  12}+Cq\kappa\|z\|_{\kappa b}^{\kappa}+Cq\kappa\|z\|_{\frac{a(\kappa q-1)}{q}}^{\frac{\kappa q-1}{q}}, \label{eq:1.1}
\eeqn
where $C$ depends on $A_{R,2p},B_{R,p+1},\|F\|_\infty,\|n+\Delta_g\varphi\|_{2p(n-1)}$ and we used  the fact that $\Delta_\varphi F\in L^{2p}(M\times[0,T),\oo_\varphi^n\wedge dt)$ in the second inequality. In fact, we have
\beqn
\Delta_\varphi F&=&-R+\tr_\varphi\Rc\leq -R+C(g)\sum_{i=1}^{n}\frac{1}{1+\varphi_{i\overline{i}}}\no\\
&\leq&-R+C(g)(n+\Delta\varphi)^{n-1}e^{-F}=-R+C(g)\td v^{n-1}e^{-F},\label{eq:3.55}
\eeqn
where $\td v=n+\Delta_g\varphi$.  By Lemma \ref{theo:main3} we have $\td v\in L^{s_0}(M\times[0,T),\oo_\varphi^n\wedge dt)$ for any $s_0>1$. Therefore, we have
$\Delta_\varphi F\in L^{2p}(M\times[0,T),\oo_\varphi^n\wedge dt)$.

Using (2.31) of Li-Zhang-Zheng \cite{[LZZ]}, for any $q>0$ we have
\beqn &&
\int_{0}^{T}dt\int_M|\Na(w^{q+\frac12})|_\varphi^2\,\oo_\varphi^n\leq C(\oo_g,\|F\|_\infty)(q+\frac12)^2\int_{0}^{T}dt\int_M\,\Big(\frac {q+1}qw^{2q}R^2\no\\&\quad&+\frac {q+1}qw^{2q}\td v^{2n-2}+\frac1q w^{2q+1}|R|+\frac 1qw^{2q+1}\td v^{n-1}\Big)\,\oo_\varphi^n. \label{eq:1.2}
\eeqn
Combining (\ref{eq:1.2}) with (\ref{eq:1.1}), we have
\beqn
I_2&\leq& Cq\kappa(q\kappa -\frac12)\Big(\int_{0}^{T}\;dt\int_{M}\;\Big(\frac{q\kappa }{q\kappa -1}w^{2\kappa q-2}R^2+\frac{q\kappa }{q\kappa -1}w^{2q\kappa -2}\td  v^{2n-2}+\frac{1}{q\kappa-1}w^{2q\kappa -1}|R|\no\\&\quad&+\frac{1}{q\kappa-1}\td v^{n-1}w^{2q\kappa -1}\Big)\;\oo_{\varphi}^n\Big)^{\frac12}
+Cq\kappa\|z\|_{\kappa b}^{\kappa}+Cq\kappa\|z\|_{\frac{a(\kappa q-1)}{q}}^{\frac{\kappa q-1}{q}}\no\\
&\leq& Cq\kappa(q\kappa -\frac12)\Big(\frac{2q\kappa}{q\kappa -1}\Big)^{\frac12}\|w^{2\kappa q-2}\|_d^{\frac12}+Cq\kappa(q\kappa -\frac12)(q\kappa-1)^{-\frac 12}\|w^{2q\kappa-1}\|_r^{\frac12}\no\\&\quad&+Cq\kappa\|z\|_{\kappa b}^\kappa+Cq\kappa\|z\|_{\frac{a(\kappa q-1)}{q}}^{\frac{\kappa q-1}{q}},\no
\eeqn
where $C$ depends on $n,p,\oo_g,Q_F,A_{R,2p},B_{R,p+1},\varphi(0)$ and $T$. Hence, we have
\beqn
I_2&\leq &Cq\kappa(q\kappa -\frac12)\Big(\frac{2q\kappa}{q\kappa -1}\Big)^{\frac12}\|z\|_{\frac{d(2q\kappa -2)}{q}}^{\frac{q\kappa -1}{q}}+Cq\kappa(q\kappa -\frac12)(q\kappa-1)^{-\frac12}\|z\|_{\frac{r(2q\kappa-1)}{q}}^{\frac{2q\kappa-1}{2q}}\no\\&\quad&+Cq\kappa\|z\|_{\kappa b}^\kappa+Cq\kappa\|z\|_{\frac{a(q\kappa -1)}{q}}^{\frac{q\kappa -1}{q}}.\label{eq:3.56}
\eeqn
Moreover, we have
\beqn
I_3&=&-q\kappa\int_{0}^{t}\;dt\int_{M}\;w^{q\kappa-1}e^{\frac F2}\nabla^2R(\Na F,\Na F)\;\oo_{\varphi}^n\no\\
&\leq& q\kappa\int_{0}^{T}\;dt\int_{M}\;w^{q\kappa-1}e^{\frac F2}|\nabla^2R|_{\varphi}|\Na F|_{\varphi}^2\;\oo_{\varphi}^n\no\\
&\leq&q\kappa\int_{0}^{T}\;dt\int_{M}\;w^{q\kappa}|\Na^2R|_{\varphi}\;\oo_{\varphi}^n\no\\
&\leq& q\kappa\Big(\int_{0}^{T}\;dt\int_{M}\;w^{2q\kappa}\;\oo_{\varphi}^n\Big)^{\frac 12}\Big(\int_{0}^{T}\;dt\int_{M}\;|\Na^2R|_{\varphi}^2\;\oo_{\varphi}^n\Big)^{\frac 12}\no\\
&=&C(B_{R,p+1})q\kappa\|z\|_{2\kappa}^\kappa,\label{eq:3.57}
\eeqn
and
\beqn
I_4=\int_{0}^{t}\;dt\int_{M}\;w^{q\kappa}\Delta_{\varphi}R\;\oo_{\varphi}^ndt\leq C(B_{R,p+1})\|z\|_{b\kappa}^{\kappa}.\label{eq:3.58}
\eeqn
Combining the inequalities (\ref{eq:3.52}), (\ref{eq:3.56}), (\ref{eq:3.57}) and (\ref{eq:3.58}), we have
\beqn
\|z\|_{\kappa,t}^{\kappa}-\|z\|_{\kappa,0}^\kappa&\leq& Cq\kappa\|z\|_{b\kappa}^\kappa+Cq\kappa(q\kappa -\frac12)\Big(\frac{2q\kappa}{q\kappa -1}\Big)^{\frac12}\|z\|_{\frac{d(2q\kappa -2)}{q}}^{\frac{q\kappa -1}{q}}\no\\&\quad&+Cq\kappa(q\kappa -\frac12)(q\kappa-1)^{-\frac12}\|z\|_{\frac{r(2q\kappa-1)}{q}}^{\frac{2q\kappa-1}{2q}}+Cq\kappa\|z\|_{\frac{a(q\kappa -1)}{q}}^{\frac{q\kappa -1}{q}}+Cq\kappa\|z\|_{2\kappa}^\kappa,\no
\eeqn
where $C$ depends on $n,p,\oo_g,Q_F,A_{R,2p},B_{R,p+1},\varphi(0)$ and $T$. The lemma is proved.

\end{proof}

Using Lemma \ref{lem:G} and the parabolic Sobolev inequality Lemma \ref{lem:interF} , we can show  Lemma \ref{lem:F}.
\begin{proof}[Proof of Lemma \ref{lem:F}]
    Let $w=e^{\frac 12 F}|\Na F|_{\varphi}^2+1$ as above. By the inequality (4.4)-(4.6) of Chen-Cheng \cite{[CC3]} or (2.27) of Li-Zhang-Zheng \cite{[LZZ]}, we have
    \beqn
    \Delta_{\varphi}w&\geq&2e^{\frac F2}\Na_{\varphi} F\cdot_{\varphi} \Na \Delta_{\varphi}F-C(g,  \|F\|_\infty)\td v^{n-1} \,w-\frac 12 Rw+\frac 12R.\label{eq:B2}
    \eeqn
    Multiplying both sides of (\ref{eq:B2}) by $w^{2q}$ and integrating by parts, for any $q> 0$  we have
    \beqn
    &&\int_{0}^{T}\;dt\int_{M}\;2qw^{2q-1}|\Na w|_{\varphi}^2\;\oo_{\varphi}^n= \int_{0}^{T}\;dt\int_{M}\;-w^{2q}\Delta_{\varphi}w\;\oo_{\varphi}^n\no\\
    &\leq&  \int_{0}^{T}\;dt\int_{M}\;-2e^{\frac F2}\Na_{\varphi}F\cdot_{\varphi}\Na\Delta_{\varphi}Fw^{2q}+C\td v^{n-1}w^{2q}+ |R| w^{2q+1}\;\oo_{\varphi}^n\no\\
    &\leq&\int_{0}^{T}\;dt\int_{M}\; \Big(q w^{2q-1}|\Na w|_{\varphi}^2+(4q+2)w^{2q}e^{\frac 12 F}(\Delta_{\varphi}F)^2
    +w^{2q+1}|\Delta_{\varphi}F|\Big)\,\oo_{\varphi}^n\no\\&\quad&+\int_{0}^{T}\;dt\int_{M}\;\Big(C\td v^{n-1}w^{2q}+ |R|w^{2q+1}\Big)\;\oo_{\varphi}^n,\label{eq:B3}
    \eeqn
where in the last equality we used the inequality (4.19) of Chen-Cheng \cite{[CC3]}.
Note that
\beq
|\Delta_{\varphi}F|\leq |R|+|\tr_{\varphi}\Rc|\leq |R|+C(g, \|F\|_\infty)\td v^{n-1}. \label{eq:B11}
\eeq
Combining (\ref{eq:B3}) with (\ref{eq:B11}), we have
\beqn &&\int_{0}^{T}\;dt\int_{M}\; q w^{2q-1}|\Na w|_{\varphi}^2\,\oo_{\varphi}^n
\leq C(\oo_g, \|F\|_\infty)\int_{0}^{T}\;dt\int_{M}\; \Big((q+1) w^{2q}R^2\no\\&&+q w^{2q}\td v^{2n-2}
+w^{2q+1}|R|+\td v^{n-1}w^{2q+1}\Big)\,\oo_{\varphi}^n.\label{eq:A2b}
\eeqn
Set $z=w^{q+\frac 12}$. By the Sobolev inequality Lemma \ref{lem:interF}, we have
\beqn
&&\int_{0}^{T}\;dt\int_{M}z^{\beta}\oo_{\varphi}^n\leq C(n,\oo_g,\gamma,\|F\|_\infty)\sup_{t\in[0,T)}\|z\|_{\kappa,t}^{\theta\beta}\int_{0}^{T}\;dt\int_{M}\;\Big(|\Na z|_{\varphi}^2+z^2\Big)\;\oo_{\varphi}^n\no\\
&&\leq C\sup_{t\in[0,T)}\|z\|_{\kappa,t}^{\theta\beta}(q+\frac12)^2\int_{0}^{T}\;dt\int_{M}\;\Big(\frac{q+1}{q}z^{\frac{4q}{2q+1}}R^2\no\\&&\quad+z^{\frac{4q}{2q+1}}\td v^{2n-2}+\frac 1qz^2|R|+\frac 1p \td v^{n-1}z^2\Big)\,\oo_{\varphi}^n\no\\
&&\leq C(q+\frac12)^2\sup_{t\in[0,T)}\|z\|_{\kappa,t}^{\theta\beta}\Big(\frac{2q+1}{q}\|z\|_{\frac{4qd}{2q+1}}^{\frac{4q}{2q+1}}+\frac 1q\|z\|_{2r}^2\Big),\label{eq:3.62}
\eeqn where $C$ depends on $n,\oo_g,\gamma,\|F\|_\infty,A_{R,2p}, B_{R,p+1}$ and $\varphi(0)$. By Lemma \ref{lem:G}, we have
\beqn
\|z\|_{\kappa,t}^{\kappa}-\|z\|_{\kappa,0}^\kappa&\leq& C(e+1)\|z\|_{b\kappa}^\kappa+C(e+1) (e+\frac12)\Big(\frac{2e+2}{e}\Big)^{\frac12}\|z\|_{\frac{4de}{2q+1}}^{\frac{2e}{2q+1}}\no\\&\quad&+C(e+1)(e+\frac12)e^{-\frac12}\|z\|_{\frac{r(4e+2)}{2q+1}}^{\frac{2e+1}{2q+1}}+C(e+1)\|z\|_{\frac{2ae}{2q+1}}^{\frac{2e}{2q+1}}+C(e+1)\|z\|_{2\kappa}^\kappa,\no\\
\quad\label{eq:3.63}
\eeqn
where $\kappa>\frac{2}{2q+1}$ by Lemma \ref{lem:G} and $e=q\kappa+\frac12\kappa-1> 0$ . Combining (\ref{eq:3.62}) and (\ref{eq:3.63}), we get
\beqn
\|z\|_\beta&\leq& C^{\frac 1\beta}(q+\frac12)^{\frac 2\beta}\Big(\|z\|_{\kappa,0}^\kappa+C(e+1)\|z\|_{b\kappa}^\kappa+C(e+1) (e+\frac12)^2(\frac{2e+2}{e})^{\frac12}\|z\|_{\frac{4de}{2q+1}}^{\frac{2e}{2q+1}}\no\\&\quad&+C(e+1)(e+\frac12)^2e^{-\frac12}\|z\|_{\frac{r(4e+2)}{2q+1}}^{\frac{2e+1}{2q+1}}+C(e+1)\|z\|_{\frac{2ae}{2q+1}}^{\frac{2e}{2q+1}}+C(e+1)\|z\|_{2\kappa}^\kappa\Big)^{\frac\theta\kappa}\no\\&\quad&\cdot\Big(\frac{2q+1}{q}\|z\|_{\frac{4qd}{2q+1}}^{\frac{4q}{2q+1}}+\frac 1q\|z\|_{2r}^2\Big)^{\frac 1\beta}.\label{eq:B10}
\eeqn
In order to use the iteration argument, we need to choose the constants in (\ref{eq:B10}) satisfying
\beqs
\beta=2+\Big(1-\frac 2\gamma\Big)\kappa&>&c:=\max\Big\{b\kappa,\frac{4de}{2q+1},\frac{r(4e+2)}{2q+1}, \frac {2ae}{2q+1}, 2\kappa,\frac{4qd}{2q+1},2r\Big\}\no \\
\kappa&>&\frac{2}{2q+1},\no
\eeqs
or equivalently,
\beqn
&&\max\Big\{\frac{2r-2}{1-\frac2\gamma},\frac 2{2q+1},\frac{4qd-4q-2}{(2q+1)(1-\frac2\gamma)}\Big\}<\kappa<\no\\
&&\min\Big\{\frac{1}{2d-1+\frac2\gamma}(2+\frac{4d}{2q+1}),\frac{1}{2r-1+\frac2\gamma}(2+\frac{2r}{2q+1}),\frac{2}{1+\frac2\gamma}\Big\}.
\eeqn
By Lemma \ref{lem:B1} in the Appendix B, for any $q$  with $\frac{1+\frac{2d}{\ga}}{2(d-1)}\leq q\leq\frac{r}{2(d-r)}$, we have
\begin{enumerate}
\item[(1)] There exists $\ka>0$ satisfying
\beq
\beta>c.
\eeq
\item[(2)]
\beqn
&&\max\Big\{\frac{2r-2}{1-\frac2\gamma},\frac 2{2q+1},\frac{4qd-4q-2}{(2q+1)(1-\frac2\gamma)}\Big\}=\frac{2r-2}{1-\frac2\gamma},\no\\
&&\min\Big\{\frac{1}{2d-1+\frac2\gamma}(2+\frac{4d}{2q+1}),\frac{1}{2r-1+\frac2\gamma}(2+\frac{2r}{2q+1}),\frac{2}{1+\frac2\gamma}\Big\}\no\\
&&=\frac{1}{2d-1+\frac2\gamma}(2+\frac{4d}{2q+1}).
\eeqn
  \item[(3)]   If $\kappa\to \frac{2r-2}{1-\frac2\gamma}$ and $q\to\frac{r}{2(d-r)}$, we have $\beta(q+\frac12)\to 2p$.
\end{enumerate}

 Therefore (\ref{eq:B10}) implies that
 \beqn
 \|z\|_\beta\leq C\|z\|_{c},\label{eq:3.67}
 \eeqn
 where $C$ depends on $n,q,\oo_g,\kappa,\gamma,\|F\|_\infty,A_{R,2p},B_{R,p+1}$ and $\varphi(0)$. Taking the $(q+\frac12)$-root in (\ref{eq:3.67}), we get
 \beqn
 \|w\|_{\beta(q+\frac12)}\leq C\|w\|_{c(q+\frac12)}.\label{eq:3.68}
 \eeqn
  Therefore by Property (3) and  (\ref{eq:3.68}) we have that for any $s:=\beta(q+\frac12)< 2p$, there exists \beq k:=c(q+\frac12)<\beta(q+\frac12)=s\eeq such that
 \beqn
 \|w\|_s\leq C(n,\oo_g,s,A_{R,2p},B_{R,p+1},\gamma,\kappa,\|F\|_\infty,\varphi(0))\|w\|_k.\label{eq:1.3}
 \eeqn
 By the interpolation inequality, we have
 \beqn
\|w\|_k\leq C(\ee)\|w\|_1+\ee \|w\|_s.\label{eq:1.4}
\eeqn
Combining (\ref{eq:1.3}) with (\ref{eq:1.4}) and choosing $\ee$ small enough, we get
\beqn
\|w\|_s\leq C\|w\|_1.\label{eq:3.70}
\eeqn
Note that
\beqn
\|w\|_1&=&\int_{0}^{T}dt\int_{M}\;\Big(e^{\frac F2}|\Na F|_{\varphi}^2+1\Big)\;\oo_{\varphi}^n\no\\
&\leq& C(\|F\|_\infty)\int_{0}^{T}\;dt\int_{M}\;|\Na F|_{\varphi}^2\;\oo_{\varphi}^n+C(\|F\|_\infty)\vol_{\oo_g}(M)T\no\\
&=&-C\int_{0}^{T}\;dt\int_{M}\;F\;\Delta_\varphi F\;\oo_{\varphi}^n+C\vol_{\oo_g}(M)T\no\\
&\leq& C(\oo_g,\|F\|_\infty,A_{R,2p},\|n+\Delta_g\varphi\|_{2p(n-1)},T).\no
\eeqn
Since $n+\Delta_g \varphi\in L^{2p(n-1)}(M\times[0,T),\oo_\varphi^n\wedge dt)$ by Lemma \ref{theo:main3}, we finish this proof.

\end{proof}

\subsection{Estimates of $\|\nabla\varphi\|_\infty$}
In this subsection, we show that $\|\nabla\varphi\|_\infty$ is bounded. First, we
recall the following result from Chen-Cheng \cite{[CC3]}, see also Lemma 2.5 in Li-Zhang-Zheng \cite{[LZZ]}.

    \begin{lem}\label{lem:equ}(cf. \cite{[CC3]}, \cite[Lemma 2.5]{[LZZ]})
        Let
        \beqs
        A(F,\varphi)&=&-(F+\la\varphi)+\frac{1}{2}\varphi^2,\\
        u&=&e^A(|\nabla\varphi|_g^2+10),
        \eeqs where $\la$ depends only on $\|\varphi\|_\infty$ and $\oo_g$.
        Then we have
        the inequality
        \beqs
        \Delta_{\varphi}u\geq \hat R u+\frac 1{n-1}|\Na \varphi|_g^{2+\frac 2n}e^{-\frac Fn}e^A,
        \eeqs where $\hat R=R-\lambda n(n+2) +(n+2)\varphi$.

    \end{lem}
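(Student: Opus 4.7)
The plan is to compute $\Delta_\varphi u$ directly by writing $u = e^A w$ with $w = |\Na\varphi|_g^2 + 10$ and applying the Leibniz rule for the K\"ahler Laplacian:
\begin{equation*}
\Delta_\varphi u \;=\; e^A\Bigl(\Delta_\varphi w + 2\,\mathrm{Re}\langle\Na A,\Na w\rangle_\varphi + w\,\Delta_\varphi A + w\,|\Na A|_\varphi^2\Bigr).
\end{equation*}
Since $w\,|\Na A|_\varphi^2 \geq 0$, this positive term is kept on standby to absorb cross terms via Cauchy--Schwarz. The task reduces to analyzing $\Delta_\varphi A$ and $\Delta_\varphi w$ separately and isolating the desired structure on the right-hand side.

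For $\Delta_\varphi A$, I would use the standard K\"ahler identities $\Delta_\varphi F = \tr_\varphi \Rc - R$, $\Delta_\varphi\varphi = n - \tr_\varphi\oo_g$, and $\Delta_\varphi(\tfrac12\varphi^2) = \varphi\,\Delta_\varphi\varphi + |\Na\varphi|_\varphi^2$ to obtain
\begin{equation*}
\Delta_\varphi A \;=\; R \;-\; \tr_\varphi \Rc \;+\; (\la-\varphi)(\tr_\varphi\oo_g - n) \;+\; |\Na\varphi|_\varphi^2.
\end{equation*}
Choosing $\la$ large enough in terms of $\|\varphi\|_\infty$ and $\oo_g$ ensures $\la-\varphi > 0$, so $(\la-\varphi)\tr_\varphi\oo_g\cdot w$ is nonnegative and available to supply the main gain, while $-(\la-\varphi)n\cdot w$ and $-\tr_\varphi\Rc\cdot w$ contribute to the $\hat R\cdot u$ piece. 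For $\Delta_\varphi w$ I would apply the K\"ahler Bochner formula for $|\Na\varphi|_g^2$: in local normal coordinates diagonalizing $\oo_g$ and $\oo_\varphi$ simultaneously, this yields curvature-of-$\oo_g$ contributions bounded below by $-C(\oo_g)|\Na\varphi|_g^2$, together with third-derivative terms $\varphi_{i\bar j k}$. The third derivatives are traded for $\Na F$ by differentiating the Monge--Amp\`ere identity $\log\det(g_{i\bar j}+\varphi_{i\bar j}) = F + \log\det g$; the resulting $\Na F$-cross-terms combine with $2\,\mathrm{Re}\langle\Na A,\Na w\rangle_\varphi$ and are absorbed by $w\,|\Na A|_\varphi^2$ via Cauchy--Schwarz, exploiting the $-\Na F$ piece of $\Na A$.

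The final step extracts the term $\frac{1}{n-1}|\Na\varphi|_g^{2+\frac{2}{n}}e^{-\frac{F}{n}}e^A$ from the leftover positive piece $(\la-\varphi)\tr_\varphi\oo_g\cdot w$. Let $\mu_1,\dots,\mu_n$ be the eigenvalues of $\oo_g$ with respect to $\oo_\varphi$, so that $\prod \mu_i = e^{-F}$. A refined AM--GM applied to $n-1$ of the $\mu_i$'s, paired with the direction carrying the largest component of $\Na\varphi$, gives
\begin{equation*}
|\Na\varphi|_g^2 \cdot \tr_\varphi\oo_g \;\geq\; \frac{1}{n-1}\,|\Na\varphi|_g^{2+\frac{2}{n}}\,e^{-\frac{F}{n}},
\end{equation*}
which is the desired nonlinear term. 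Everything that remains after this extraction bundles into a single linear expression $\hat R\cdot u$ with $\hat R = R - \la n(n+2) + (n+2)\varphi$, the precise coefficients $\la n(n+2)$ and $(n+2)$ being determined by collecting constants from $\Delta_\varphi A$, the Hessian contributions in $\Delta_\varphi w$, and the unabsorbed $(\la-\varphi)n$ piece. The main obstacle is exactly this bookkeeping: verifying that the quadratic correction $\tfrac12\varphi^2$ in $A$ is precisely calibrated so that the coefficient of $\tr_\varphi\oo_g\cdot w$ has the right sign \emph{and} the right proportionality with $(n+2)$, that all third-derivative cross terms arising from the Bochner identity are absorbed without spoiling the $\frac{1}{n-1}$ constant, and that the refined AM--GM step loses the minimum possible multiplicative factor to give the stated inequality on the nose.
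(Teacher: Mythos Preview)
The paper does not prove this lemma itself; it quotes the result from Chen--Cheng \cite{[CC3]} and \cite[Lemma 2.5]{[LZZ]}. Your overall scheme---Leibniz expansion of $\Delta_\varphi(e^A w)$, computing $\Delta_\varphi A$ via the standard identities, a Bochner-type formula for $\Delta_\varphi(|\Na\varphi|_g^2)$, and absorbing the $\Na F$ cross terms against $w\,|\Na A|_\varphi^2$ using the $-\Na F$ piece of $\Na A$---is exactly the route those references take.

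Where your proposal fails is the extraction of the gain term. The displayed inequality
\[
|\Na\varphi|_g^2 \cdot \tr_\varphi\oo_g \;\geq\; \tfrac{1}{n-1}\,|\Na\varphi|_g^{2+\frac{2}{n}}\,e^{-\frac{F}{n}}
\]
is simply false: it would require $\tr_\varphi\oo_g \geq \tfrac{1}{n-1}|\Na\varphi|_g^{2/n}e^{-F/n}$, but $\tr_\varphi\oo_g=\sum_i\mu_i$ carries no information about $\Na\varphi$, so once $|\Na\varphi|_g$ is large the inequality breaks (e.g.\ $n=2$, $\mu_1=\mu_2=1$, $|\Na\varphi|_g^2=M\gg 1$ gives $2M$ on the left versus $M^{3/2}$ on the right). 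No AM--GM on the $\mu_i$ alone can manufacture a power of $|\Na\varphi|_g$. The missing ingredient is the term $|\Na\varphi|_\varphi^2=\sum_i\mu_i|\varphi_i|^2$ coming from $\Delta_\varphi(\tfrac12\varphi^2)$, which you correctly listed in $\Delta_\varphi A$ but then did not deploy in the gain step. This term is indispensable precisely because it \emph{correlates} the eigenvalues $\mu_i$ with the gradient components $|\varphi_i|^2$: picking $j$ with $|\varphi_j|^2\geq\tfrac{1}{n}|\Na\varphi|_g^2$, one bounds $|\Na\varphi|_\varphi^2\geq\tfrac{1}{n}\mu_j|\Na\varphi|_g^2$ and $\tr_\varphi\oo_g\geq(n-1)\bigl(e^{-F}/\mu_j\bigr)^{1/(n-1)}$ (AM--GM on the remaining $n-1$ eigenvalues), and then a weighted AM--GM in $\mu_j$ applied to the \emph{sum} of these two lower bounds yields $\geq c_n\,|\Na\varphi|_g^{2/n}e^{-F/n}$. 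Multiplying by $w\geq|\Na\varphi|_g^2$ produces the stated nonlinear term. The $\tfrac12\varphi^2$ correction in $A$ is there specifically to supply this $|\Na\varphi|_\varphi^2$; without using it the argument cannot close.
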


    Using the equation (\ref{eq000}) of Calabi flow, we have the result.
    \begin{lem}\label{lem:3.14}
        Let $z=u^q(q>1)$ where $u$ is defined in Lemma \ref{lem:equ}. We have
        \beqn
         \|z\|_{\kappa,t}^{\kappa}- \|z\|_{\kappa, 0}^{\kappa}\leq C\|z\|_{2\kappa}^\kappa +C\|z\|_{b\kappa}^\kappa,
        \eeqn
        where C depends on $n,\oo_g,Q_F,A_{R,2p}$, $B_{R,p+1}$, $\|\varphi\|_\infty, \|F\|_\infty,\varphi(0)$ and $T$. Here $b$ and $p$ satisfy the equality $\frac 1{p+1}+\frac 1b=1.$
    \end{lem}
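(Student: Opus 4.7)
I will follow the pattern of Lemma \ref{lem:3.7} and Lemma \ref{lem:G}: differentiate the spatial integral in time, rewrite $\dot u$ using the Calabi flow equations, and control each resulting term by H\"older plus the hypotheses. Since $\partial_t\oo_\varphi^n = \Delta_\varphi R\cdot\oo_\varphi^n$ and $\dot F = \Delta_\varphi R$, $\dot\varphi = R-\un R$, the chain rule applied to $u = e^A(|\Na\varphi|_g^2+10)$ with $A=-(F+\la\varphi)+\tfrac12\varphi^2$ gives
\[
\dot A = -\Delta_\varphi R - (\la-\varphi)(R-\un R), \qquad \partial_t|\Na\varphi|_g^2 = 2\,\mathrm{Re}(\Na R\cdot_g\Na\varphi),
\]
so $\dot u = \dot A\, u + 2e^A\,\mathrm{Re}(\Na R\cdot_g\Na\varphi)$. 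Substituting into $\frac{d}{dt}\int u^{q\kappa}\oo_\varphi^n = \int(q\kappa u^{q\kappa-1}\dot u + u^{q\kappa}\Delta_\varphi R)\oo_\varphi^n$ and integrating over $[0,t]$ yields
\[
\|z\|_{\kappa,t}^\kappa - \|z\|_{\kappa,0}^\kappa = \int_0^t\!\!\int_M \!\Big\{(1-q\kappa)u^{q\kappa}\Delta_\varphi R - q\kappa u^{q\kappa}(\la-\varphi)(R-\un R) + 2q\kappa u^{q\kappa-1}e^A\mathrm{Re}(\Na R\cdot_g\Na\varphi)\Big\}\oo_\varphi^n.
\]

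\noindent The first two terms are straightforward. H\"older with conjugate exponents $p+1$ and $b$ bounds $\int\int u^{q\kappa}|\Delta_\varphi R|\,\oo_\varphi^n dt \leq \|\Delta_\varphi R\|_{p+1}\|z\|_{b\kappa}^\kappa \leq C\|z\|_{b\kappa}^\kappa$, using the bound on $B_{R,p+1}$. For the $R$-term, the boundedness of $\|\varphi\|_\infty$ from Theorem \ref{theo:main1} and H\"older with conjugate of $2p$ give $\int\int u^{q\kappa}|R|\,\oo_\varphi^n dt \leq C\|z\|_{r\kappa}^\kappa$ where $r = \frac{2p}{2p-1} < b$, which is controlled by $C\|z\|_{b\kappa}^\kappa$ via a finite space-time volume factor.

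\noindent The main obstacle is the gradient term. Using $|\Na\varphi|_g^2 \leq ue^{-A}$, the pointwise comparison $|\Na R|_g^2 \leq (n+\Delta_g\varphi)|\Na R|_\varphi^2$ (immediate in a frame diagonalising $g$ and $g_\varphi$ simultaneously), and the boundedness of $e^{A/2}$, the gradient term is bounded by
\[
C\,q\kappa\int_0^T\!\!\int_M u^{q\kappa-1/2}(n+\Delta_g\varphi)^{1/2}|\Na R|_\varphi\,\oo_\varphi^n dt.
\]
Now apply H\"older with three exponents $(\alpha,\beta,2)$ where $\frac1\alpha+\frac1\beta+\frac12=1$, taking $\beta$ large so that $\alpha$ is only slightly bigger than $2$. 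The $L^\beta$ norm of $(n+\Delta_g\varphi)^{1/2}$ is controlled for any finite $\beta$ by Lemma \ref{theo:main3}, while the $L^2$ norm of $|\Na R|_\varphi$ is controlled by integration by parts,
\[
\int_0^T\!\!\int_M|\Na R|_\varphi^2\,\oo_\varphi^n dt = -\int_0^T\!\!\int_M R\,\Delta_\varphi R\,\oo_\varphi^n dt \leq C\|R\|_{2p}\|\Delta_\varphi R\|_{p+1},
\]
which is finite by the assumptions on $A_{R,2p}$ and $B_{R,p+1}$. The remaining factor $\|u^{q\kappa-1/2}\|_\alpha$ is controlled, by H\"older against the finite space-time volume, by $C\|z\|_{2\kappa}^{\kappa-1/(2q)}$ once $\alpha$ is chosen so that $(q\kappa-1/2)\alpha \leq 2q\kappa$; Young's inequality then absorbs this into $C(\|z\|_{2\kappa}^\kappa+1)$.

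\noindent Summing the three estimates and absorbing the additive constants into $C$ (using $u\geq 10e^A\geq c_0>0$, which forces a uniform positive lower bound on $\|z\|_{b\kappa}^\kappa$) produces the claimed inequality. The hardest step is the gradient term: it forces one to combine simultaneously the integrability of $n+\Delta_g\varphi$ (Lemma \ref{theo:main3}), the elliptic $L^2$ bound on $|\Na R|_\varphi$ coming from the hypotheses, and the pointwise comparison between the two metrics, before a final Young inequality recovers the clean exponent $\kappa$ demanded by the statement.
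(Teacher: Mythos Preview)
Your proof is correct and follows the same overall template as the paper: differentiate $\int_M u^{q\kappa}\,\oo_\varphi^n$, use $\dot u=\dot A\,u+2e^A\mathrm{Re}(\Na R\cdot_g\Na\varphi)$, and bound the resulting three pieces by H\"older. The $\Delta_\varphi R$ and $R$ terms are handled exactly as in the paper.

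The one genuine difference is in the gradient term $J_2$. The paper first applies Cauchy--Schwarz to separate $u^{q\kappa-1}e^A\Na\varphi$ from $\Na R$, obtaining the factor $\bigl(\int_0^T\!\int_M|\Na R|_g^2\,\oo_\varphi^n\bigr)^{1/2}$, and bounds the latter by integrating by parts \emph{with respect to $g$}: $\int|\Na R|_g^2\,\oo_g^n=-\int R\,\Delta_g R\,\oo_g^n$, then $|\Delta_g R|\le|\Na^2R|_\varphi(n+\Delta_g\varphi)$ and the K\"ahler identity $\int|\Na^2R|_\varphi^2\,\oo_\varphi^n=\int|\Delta_\varphi R|^2\,\oo_\varphi^n$. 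You instead convert pointwise $|\Na R|_g\le(n+\Delta_g\varphi)^{1/2}|\Na R|_\varphi$ and bound $\int|\Na R|_\varphi^2=-\int R\,\Delta_\varphi R$ directly; this is cleaner in that it avoids the K\"ahler identity.

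There is one caveat worth flagging. In your three-way H\"older the condition $(q\kappa-\tfrac12)\alpha\le 2q\kappa$ forces $\alpha\downarrow2$ as $q\to\infty$, hence $\beta\to\infty$, so the resulting constant depends on $\|n+\Delta_g\varphi\|_{s(q)}$ with $s(q)\to\infty$. For the present lemma (fixed $q,\kappa$) this is harmless, but the lemma is used in the Moser iteration of Lemma~\ref{lem:3.12} where $q\to\infty$; there one needs this norm bounded uniformly in $s$, which does follow from the Moser structure of Lemma~\ref{theo:main3} though it is not stated explicitly. The paper's route sidesteps this issue entirely: because it decouples $u^{q\kappa-\cdot}$ from $\Na R$ \emph{before} introducing the factor $n+\Delta_g\varphi$, the exponent on $n+\Delta_g\varphi$ is a fixed $s$ with $\tfrac1{2p}+\tfrac12+\tfrac1s=1$, independent of $q$, yielding the explicit factor $Cq\kappa$ that the iteration requires.
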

    \begin{proof}
        Taking the derivative with respect to $t$, we have
        \beqn
        \frac{\partial}{\partial t}\|z\|_{\kappa,t}^{\kappa}=\frac{\partial}{\partial t}\int_M\;|z|^{\kappa}\;\oo_\varphi^n=\int_M\;\Big(\kappa z^{\kappa-1}\dot{z}+z^{\kappa}\dot{F}\Big)\;\oo_\varphi^n.\no
        \eeqn
            Using $\dot{z}=qu^{ q-1}\dot{u}$ and
        \beqn &&\dot{u}=\dot{A}u+2e^A\text{Re}(\nabla R\cdot\nabla\varphi ),\no\\ &&\dot{A}=-(\dot{F}+\lambda\dot{\varphi})+\varphi\dot{\varphi},\no
        \eeqn
        where $\Na R\cdot \Na\varphi$ is taken with respect to $\oo_g$, we have
        \beqn
        \|z\|_{\kappa,t}^{\kappa}- \|z\|_{\kappa, 0}^{\kappa}&\leq&\int_{0}^{T}\;dt\int_M\Big(\;\kappa z^{\kappa-1}qz^{\frac{q-1}{q}}\Big(\dot{A}u+2e^A\text{Re}(\nabla R\cdot\nabla\varphi)\Big)+\dot{F}z^{\kappa}\Big)\;\oo_{\varphi}^n\no\\
        &:=&J_1+J_2+J_3.\no
        \eeqn
        We  estimate $J_1,J_2 $ and $J_3$ respectively. Note that
        \beqn
        J_1=\int_{0}^{T}\;dt\int_{M}\;\kappa q z^{\kappa}\dot{A}\;\oo_{\varphi}^n&\leq& C(\|\varphi\|_\infty,A_{R,2p},B_{R,p+1})q\kappa \Big(\int_{0}^{T}\;dt\int_{M}\;z^{b\kappa}\;\oo_{\varphi}^n\Big)^{\frac1b}\no\\&=&Cq\kappa \|z\|_{b\kappa}^{\kappa},\label{eq:3.79}
        \eeqn where $b$ and $p$ satisfy $\frac 1{p+1}+\frac 1b=1,$
        and
       \beqn
        J_2&=&2q\kappa \int_{0}^{T}\;dt\int_{M}\;z^{\kappa-\frac 1q}e^A\text{Re}(\Na R\cdot\nabla\varphi)\;\oo_{\varphi}^n\no\\
        &\leq& 2q\kappa \Big(\int_{0}^{T}\;dt\int_{M}\;z^{2\kappa-\frac 2q}e^{2A}|\nabla\varphi|_g^2\;\oo_{\varphi}^n\Big)^{\frac 12}\Big(\int_{0}^{T}\;dt\int_{M}\;|\Na R|_g^2\;\oo_{\varphi}^n\Big)^{\frac 12}.\label{eq:2.6}
\eeqn
Note that
   \beqn
        \int_{0}^{T}\;dt\int_M|\Na R|_g^2\,\oo_\varphi^n&\leq& C(\|F\|_\infty)\int_{0}^{T}\;dt\int_M |R\Delta_g R|\,\oo_g^n\no\\
        &\leq&C\int_{0}^{T}\;dt\int_M|R||\Na^2R|_\varphi(n+\Delta_g\varphi)\,\oo_g^n\no\\
        &\leq&C\Big(\int_{0}^{T}\;dt\int_M|R|^{2p}\,\oo_\varphi^n\Big)^{\frac{1}{2p}}
\Big(\int_{0}^{T}\;dt\int_M|\Na^2R|_\varphi^2\,\oo_\varphi^n\Big)^{\frac12}\no\\&&
\Big(\int_{0}^{T}\;dt\int_M\td v^{s}\,\oo_\varphi^n\Big)^{\frac1s},\label{eq:2.8}
        \eeqn where $s$ and $p$ satisfy $\frac 1{2p}+\frac12+\frac 1s=1$.
Combining (\ref{eq:2.6}) with (\ref{eq:2.8}), we have
        \beqn
        J_2        &\leq& C(\|F\|_\infty,A_{R,2p},B_{R,p+1},\|n+\Delta_g\varphi\|_s)q\kappa \Big(\int_{0}^{T}\;dt\int_{M}\;z^{2\kappa-\frac 2q}u\;\oo_\varphi^n\Big)^{\frac 12}\no\\&=&Cq\kappa \Big(\int_{0}^{T}\;dt\int_{M}\;z^{2\kappa-\frac 1q}\;\oo_{\varphi}^n\Big)^{\frac 12}\no\\
        &\leq& C(\|\varphi\|_\infty,\|F\|_\infty,A_{R,2p},B_{R,p+1},\|n+\Delta_g\varphi\|_s)q\kappa \Big(\int_{0}^{T}\;dt\int_{M}\;z^{2\kappa}\;\oo_{\varphi}^n\Big)^{\frac 12}\no\\&=&Cq\kappa \|z\|_{2\kappa}^{\kappa}.\label{eq:3.80}
        \eeqn
        Moreover, we have
        \beqn
        J_3=\int_0^T\,dt\int_M\,\dot{F}z^{\kappa}\,\oo_{\varphi}^n\leq C(B_{R,p+1})\Big(\int_{0}^{T}\;dt\int_{M}\;z^{b\kappa}\;\oo_{\varphi}^n\Big)^{\frac 1b}=C\|z\|_{\kappa b}^{\kappa}.\label{eq:3.81}
        \eeqn
        Combining (\ref{eq:3.79}), (\ref{eq:3.80}) with (\ref{eq:3.81}), we get
        \beqn
            \|z\|_{\kappa,t}^{\kappa}- \|z\|_{\kappa, 0}^{\kappa}\leq Cq\kappa\|z\|_{b\kappa}^\kappa+Cq\kappa\|z\|_{2\kappa}^\kappa,\no
        \eeqn
        where $C$ depends on $n,\oo_g,Q_F,A_{R,2p}$, $B_{R,p+1}$, $\|\varphi\|_\infty, \|F\|_\infty,\varphi(0)$ and $T$. The lemma is proved.
    \end{proof}

    Using Lemma \ref{lem:3.14} and Lemma \ref{lem:interF}, we have the result.
    \begin{lem}\label{lem:3.12}Under the assumption of Lemma \ref{theo:main3}, we have
        \beq
        |\Na \varphi(x, t)|_g\leq C, \label{eq:1.6}
        \eeq
where C depends on $n,\oo_g,\|\varphi\|_\infty,\|F\|_\infty,Q_F,A_{R,2p},B_{R,p+1},\varphi(0)$ and $T$.
    \end{lem}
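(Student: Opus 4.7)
The plan is a parabolic Moser iteration applied to the auxiliary function $u = e^A(|\nabla\varphi|_g^2 + 10)$ from Lemma~\ref{lem:equ}, modeled on the argument used in the proof of Lemma~\ref{lem:F}. The three main ingredients are the spatial differential inequality $\Delta_\varphi u \geq \hat R\,u + \tfrac{1}{n-1}|\nabla\varphi|_g^{2+2/n}e^{-F/n}e^A$ of Lemma~\ref{lem:equ}, the time-derivative estimate of Lemma~\ref{lem:3.14}, and the parabolic Sobolev inequality of Lemma~\ref{lem:interF}. The $L^\infty$ bounds on $F$ and $\varphi$ are furnished by Theorem~\ref{theo:main1}, the space-time $L^s$ bounds on $n+\Delta_g\varphi$ (for any $s\geq 1$) by Lemma~\ref{theo:main3}, and the space-time $L^{2s}$ bounds on $|\nabla F|_\varphi$ (for any $s<2p$) by Lemma~\ref{lem:F}; these will be used repeatedly to absorb H\"older partners.

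For the spatial step, I would multiply the inequality of Lemma~\ref{lem:equ} by $u^{2q-1}$ for $q>1/2$, integrate over $M$ with respect to $\oo_\varphi^n$, and integrate by parts. Discarding the nonnegative gradient term and using $|\hat R|\leq |R|+C$ (via the bound on $\|\varphi\|_\infty$), this gives
\[
(2q-1)\int_M u^{2q-2}|\nabla u|_\varphi^2\,\oo_\varphi^n \leq C\int_M(|R|+1)\,u^{2q}\,\oo_\varphi^n.
\]
Integrating in $t$ and applying H\"older's inequality with exponents $2p$ and $r=\tfrac{2p}{2p-1}$ yields
\[
\int_0^T\!\int_M|\nabla u^q|_\varphi^2\,\oo_\varphi^n\,dt \leq C\,\frac{q^2}{2q-1}\bigl(\|u\|_{2qr}^{2q}+\|u\|_{2q}^{2q}\bigr).
\]

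For the parabolic step, set $z=u^q$ and apply Lemma~\ref{lem:interF}:
\[
\int_0^T\!\int_M z^\beta\,\oo_\varphi^n\,dt \leq C\sup_{t\in[0,T)}\|z\|_{\kappa,t}^{(1-2/\gamma)\kappa}\int_0^T\!\int_M\bigl(|\nabla z|_\varphi^2+z^2\bigr)\,\oo_\varphi^n\,dt.
\]
The supremum factor is bounded via Lemma~\ref{lem:3.14} by $\|z\|_{\kappa,0}^\kappa+Cq\kappa\bigl(\|z\|_{2\kappa}^\kappa+\|z\|_{b\kappa}^\kappa\bigr)$ with $b=\tfrac{p+1}{p}$; substituting the spatial bound for $\int|\nabla z|_\varphi^2$ and extracting $(\beta q)$-th roots, one obtains a recursive inequality of the form $\|u\|_{\beta q}\leq C(q)\,\|u\|_{q\,c}$ with $c=\max\{2r,2\kappa,b\kappa\}$. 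To iterate I would choose $\gamma$ close to $\tfrac{2n}{n-1}$ and $\kappa$ so that $\beta=2+(1-2/\gamma)\kappa$ strictly exceeds $c$; this is possible because $p>n$ forces $r$ and $b$ to lie strictly below $\tfrac{2n}{2n-1}<2$, mirroring the exponent analysis of Lemma~\ref{lem:H}. Standard Moser iteration then gives $\|u\|_{L^\infty(M\times[0,T))}\leq C\|u\|_{q_0}$ for some fixed $q_0\geq 1$, and the initial $L^{q_0}$ bound follows from the pointwise estimate $|\nabla\varphi|_g^2\leq C(n+\Delta_g\varphi)$ together with Lemma~\ref{theo:main3}. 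Since $u\geq 10\,e^A$ and $A$ is bounded, this translates to the desired estimate (\ref{eq:1.6}).

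The main obstacle will be the bookkeeping of exponents: verifying that $\beta>c$ holds with a uniform gap along the entire iteration, and that every H\"older partner appearing on the right-hand side is dominated by a norm already controlled by Theorem~\ref{theo:main1}, Lemma~\ref{theo:main3}, or Lemma~\ref{lem:F}. Technically this is somewhat simpler than the corresponding passage in the proof of Lemma~\ref{lem:F}, since the only ``bad'' term arising from the differential inequality here is $|R|\,u^{2q}$, without additional $\nabla R$ or $\Delta_\varphi R$ contributions to juggle.
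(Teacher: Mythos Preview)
Your overall architecture is correct and essentially matches the paper's proof: both multiply the differential inequality of Lemma~\ref{lem:equ} by a power of $u$, integrate by parts to get a gradient bound, feed this into Lemma~\ref{lem:interF}, control the supremum factor via Lemma~\ref{lem:3.14}, and then iterate. Your exponent bookkeeping is also right (since $b<2$, your $c=\max\{2r,2\kappa,b\kappa\}$ coincides with the paper's $\max\{2r,2\kappa\}$).

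There is, however, a genuine gap in the final step. The ``pointwise estimate $|\nabla\varphi|_g^2\leq C(n+\Delta_g\varphi)$'' you invoke to start the iteration is simply false: positivity of $\omega_\varphi$ controls the complex Hessian of $\varphi$, not its gradient, and there is no pointwise inequality between first and second derivatives of this kind. The paper obtains the seed bound differently: after Moser iteration it gets $\|u\|_\infty\leq C\|u\|_2$, interpolates to $\|u\|_\infty\leq C\|u\|_1$, and then bounds $\|u\|_1$ by integration by parts,
\[
\int_0^T\!\!\int_M |\nabla\varphi|_g^2\,\omega_g^n\,dt=\int_0^T\!\!\int_M(-\varphi)\,\Delta_g\varphi\,\omega_g^n\,dt,
\]
followed by $|\Delta_g\varphi|\leq|\nabla^2\varphi|_\varphi(n+\Delta_g\varphi)$, H\"older, the identity $\int_M|\nabla^2\varphi|_\varphi^2\,\omega_\varphi^n=\int_M|\Delta_\varphi\varphi|^2\,\omega_\varphi^n$, and the bound $|\Delta_\varphi\varphi|\leq n+e^{-F}(n+\Delta_g\varphi)^{n-1}$, all of which are controlled by Lemma~\ref{theo:main3}. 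Replace your last sentence with this argument and the proof goes through.
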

    \begin{proof}Let $q> 1$. Since by Lemma \ref{lem:equ} $u=e^A(|\nabla\varphi|_g^2+10)$ satisfies
        \beqn
        \Delta_{\varphi}u\geq \hat R u+h, \label{eq:u}\no
        \eeqn where $h=\frac 1{n-1}|\Na \varphi|_g^{2+\frac 2n}e^{-\frac Fn}e^A,$ multiplying both
        sides by $u^{q-1}$ and integrating by parts we have
        \beqn
        &&\frac {4(q-1)}{q^2}\int_{0}^{T}dt\int_M\; |\Na (u^{\frac {q}2})|_\varphi^2\,\oo_{\varphi}^n =
        (q-1)\int_{0}^{T}dt\int_M\; u^{q-2}|\Na u|_\varphi^2\,\oo_{\varphi}^n\no\\
        &&=-\int_{0}^{T}dt\int_M\;u^{q-1}\Delta_{\varphi}u\,\oo_{\varphi}^n
        \leq-\int_{0}^{T}dt\int_M\;\Big(\hat R u^{q}+hu^{q-1}\Big)\,\oo_{\varphi}^n\no\\
        &&\leq\int_{0}^{T}dt\int_M\;|\hat R| u^{q}\,\oo_{\varphi}^n.\no
        \eeqn
        Letting $z=u^{\frac q2}$and using the Sobolev inequality Lemma \ref{lem:interF}, we have
        \beqn
        \int_{0}^{T}\;dt\int_M\;|z|^{\beta}\;\oo_\varphi^n&\leq& C(n,\oo_g,\gamma,\|F\|_\infty)\sup_{t\in[0,T)}\;\|z\|_{\kappa,t}^{(1-\frac{2}{\gamma})
\kappa}   \int_{0}^{T}dt\int_M\;\Big(|\nabla z|_\varphi^2+z^2\Big)\;\oo_\varphi^n\no\\
        &\leq& C\sup_{t\in[0,T)}\;\|z\|_{\kappa,t}^{(1-\frac{2}{\gamma})\kappa}\int_{0}^{T}\;dt\int_M\;(|\hat R|+1) u^{q}\,\oo_{\varphi}^n.\no
        \eeqn
        by Lemma \ref{lem:3.14}, we have
        \beqn
        \|z\|_{\kappa,t}^\kappa\leq \|z\|_{\kappa,0}^\kappa+Cq\kappa\|z\|_{b\kappa}^\kappa+Cq\kappa\|z\|_{2\kappa}^\kappa.\no
        \eeqn
        Therefore, we have
        \beqn
        \|z\|_{\beta}&\leq& Cq^{\frac\theta\kappa}\kappa^{\frac\theta\kappa}\Big(\|z\|_{\kappa,0}^{\kappa}+\|z\|_{b\kappa}^{\kappa}+\|z\|_{2\kappa}^{\kappa}\Big)^{\frac \theta \kappa}\|z\|_{2r}^{\frac{2}{\beta}}\no\\
        &\leq& Cq^{\frac{\theta}{\kappa}}\kappa^{\frac{\theta}{\kappa}}\Big(\sup_{x\in M}\Big(e^A(|\nabla\varphi|_g^2(x,0)+10)\Big)\vol_{\oo_g}(M)+2q\kappa\|z\|_{2\kappa}^{\kappa}\Big)^{\frac\theta\kappa}\|z\|_{2r}^{\frac 2\beta}\no\\
        &\leq& Cq^{\frac{\theta}{\kappa}}\|z\|_{2\kappa}^{\theta}\|z\|_{2r}^{\frac 2\beta},\no
        \eeqn
        where $C$ only depends on $n,\kappa,\gamma,\oo_g,\|F\|_\infty,\|\varphi\|_\infty,A_{R,2p},B_{R,p+1},\varphi(0)$ and $T$.
       By (\ref{eq:3.41}) we have
  $
        \beta>\max\{2\kappa,2r\}.
       $
    We conclude that if $q$ is large enough, then
        \beqn
        \|z\|_{\beta}\leq Cq^{\frac{\theta}{\kappa}}\|z\|_{2\max\{\kappa,r\}},\no
        \eeqn
        or equivalently,
        \beqn
        \|u\|_{\frac{q\beta}{2}}\leq C^{\frac{2}{ q}}q^{\frac{2\theta}{q\kappa}}\|u\|_{q\max\{\kappa,r\}}.\label{eq:3.99}
        \eeqn
        Letting $\theta_1=\frac \beta{\max\{2\kappa,2r\}}>1$ and $q_n=\frac{2}{\max\{r,\kappa\}}\theta_1^n$, the inequality (\ref{eq:3.99}) implies that
        \beqn
        \|u\|_{q_{n+1}\max\{r,\kappa\}}\leq C^{\frac 2{q_n}}q_n^{\frac{2\theta}{ q_n \kappa}}\|u\|_{q_n\max\{r,\kappa\}}.\no
        \eeqn
        Since $\kappa<\frac2{1+\frac 2\gamma}<\frac {2n}{2n-1}<2$ and $q_0=\frac{2}{\max\{r,\kappa\}}>1$, the standard Moser iteration argument shows that
        \beq
        \|u\|_\infty\leq C\|u\|_2\label{eq:3.100}
        \eeq
for some constant $C$ depending on $n,\kappa,\gamma,\oo_g,\|F\|_\infty,\|\varphi\|_\infty,A_{R,2p},B_{R,p+1},\varphi(0)$ and $T$. By the interpolation inequality Lemma \ref{lem:2.1}, we have
\beqn
\|u\|_2\leq \|u\|_1^{\frac 12}\|u\|_\infty^\frac 12.\label{eq:3.101}
\eeqn
Combining (\ref{eq:3.100}) and (\ref{eq:3.101}), we get
\beqn
\|u\|_\infty \leq C\|u\|_1.\label{eq:1.5}
\eeqn
Next we show that $\|u\|_1$ is bounded.
\beqn
\|u\|_1&=&\int_{0}^{T}dt\int_Me^A(|\Na\varphi|_g^2+10)\,\oo_\varphi^n\no\\
&\leq& C(\|\varphi\|_\infty,\|F\|_\infty)\int_{0}^{T}dt\int_M(|\Na\varphi|_g^2+10)\,\oo_g^n\no\\
&\leq& 10CT\cdot \vol_{\oo_g}(M)+C\int_{0}^{T}dt\int_M|\varphi\Delta_g\varphi|\,\oo_g^n.\label{eq:3.103}
\eeqn
Since $|\Delta_g\varphi|\leq |\Na^2\varphi|_\varphi (n+\Delta_g\varphi)$, we have
\beqn
\int_{0}^{T}dt\int_M|\varphi\Delta_g\varphi|\,\oo_g^n&\leq& C(\|\varphi\|_\infty)\Big(\int_{0}^{T}dt\int_M|\Na^2\varphi|_\varphi^2\,\oo_\varphi^n\Big)^\frac12\Big(\int_{0}^{T}dt\int_M(n+\Delta_g\varphi)^2\,\oo_\varphi^n\Big)^\frac12\no\\
&=&C\Big(\int_{0}^{T}dt\int_M|\Delta_\varphi \varphi|^2\,\oo_\varphi^n\Big)^\frac12\Big(\int_{0}^{T}dt\int_M(n+\Delta_g\varphi)^2\,\oo_\varphi^n\Big)^\frac 12.\no\\
\quad\label{eq:3.94}
\eeqn
Since $\Delta_\varphi \varphi=n-\tr_\varphi\oo_g\leq n+\td v^{n-1}e^{-F} $ by (\ref{eq:3.55}), we conclude that the right-hand side of (\ref{eq:3.94}) is bounded. Therefore, (\ref{eq:3.103}) implies that $\|u\|_1$ is bounded and by (\ref{eq:1.5}) we have (\ref{eq:1.6}). The lemma is proved.

    \end{proof}
     \subsection{Estimates of $\|n+\Delta_g\varphi\|_\infty$}
     In this section, we show the   estimate of $\Delta_{g}\varphi$. First, we
recall the following result from Chen-Cheng \cite{[CC3]}, see also Lemma 2.8 in Li-Zhang-Zheng \cite{[LZZ]}.
    \begin{lem}\label{lem:v}(cf. \cite{[CC3]}, \cite[Lemma 2.8]{[LZZ]}) Let
        \beq
        v=e^{-\alpha(F+\lambda\varphi)}(n+\Delta_g\varphi).\no
        \eeq
        Let $q>1$ and $\al>1$. There exists a constant $C(\oo_g)$ such that for $\la>C(\oo_g)$ we have
        \beqn &&
        \frac {3(q-1)}{q^2}\int_M\;  |\Na v^{\frac q2}|_{\varphi}^2\;\oo_{\varphi}^n\leq \int_M\; \Big( \td f+\frac {\al\la}{\al-1}+\frac 1n e^{-\frac Fn}R_g \Big)v^{q}\oo_{\varphi}^n\no\\
        &&+2q\int_M\;   v^{q}|\Na F|_\varphi^2\,\oo_\varphi^n+\frac {2\al^2\la^2q}{(\al-1)^2}\int_M\; e^B v^{q-1}|\Na \varphi|_g^2\,\oo_g^n,\label{eq:B01}
        \eeqn where $B=(1-\al)F-\al \la \varphi$ and $\td f=\alpha (\lambda n-R).$

    \end{lem}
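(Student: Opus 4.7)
The plan is to adapt the proof of Lemma \ref{lem:interD} and \cite[Lemma 2.8]{[LZZ]}: derive a pointwise differential inequality for $v$ from Yau's second order estimate, multiply by $v^{q-1}$ and integrate by parts against $\oo_\varphi^n$, and close with weighted Young inequalities that absorb the cross gradient terms into the good Dirichlet energy on the left.

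First I would write $v=e^G\td v$ with $G=-\al(F+\la\varphi)$ and $\td v=n+\DD_g\varphi$. Yau's $C^2$ estimate
\beq
\DD_{\varphi}\log\td v\geq \frac{\DD_g F}{\td v}-C_0(\oo_g)\tr_{\varphi}\oo_g,\no
\eeq
combined with $\DD_{\varphi}F=-R+\tr_{\varphi}\Rc$, $\DD_{\varphi}\varphi=n-\tr_{\varphi}\oo_g$ and the product-rule expansion of $\DD_\varphi(e^G\td v)$ produces a pointwise inequality of the schematic form
\beq
\DD_{\varphi}v\geq \td f\cdot v+\big(\al\la-C_1(\oo_g)\big)v\tr_{\varphi}\oo_g-2\al\,\mathrm{Re}\langle\Na F+\la\Na\varphi,\Na v\rangle_{\varphi}+(\text{non-negative remainder}).\no
\eeq
Choosing $\la>C(\oo_g)$ so that $\al\la-C_1>0$ and applying $\tr_{\varphi}\oo_g\geq n\,e^{-F/n}$ together with a H\"older decomposition (exactly as in the proof of Lemma \ref{lem:interD}) extracts the $\frac{\al\la}{\al-1}v$ and $\frac{1}{n}e^{-F/n}R_g v$ contributions on the right-hand side of (\ref{eq:B01}).

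Next I would multiply both sides by $v^{q-1}$ and integrate against $\oo_\varphi^n$. Integration by parts on the left yields $-(q-1)\int_M v^{q-2}|\Na v|_{\varphi}^2\oo_{\varphi}^n=-\tfrac{4(q-1)}{q^2}\int_M|\Na v^{q/2}|_{\varphi}^2\oo_{\varphi}^n$. For the two cross gradient terms on the right, I would apply Young's inequality $2ab\leq \eps a^2+\eps^{-1}b^2$ with $\eps\sim \frac{q-1}{q^2}$, tuned so that each contributes at most $\frac{q-1}{2q^2}\int_M|\Na v^{q/2}|_{\varphi}^2\oo_\varphi^n$ back on the LHS side. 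For the $\Na F$ piece, combining the Young split with one further integration by parts (using $\DD_\varphi F=-R+\tr_{\varphi}\Rc$ so as to trade a factor of $\al$ for a factor of $q^{-1}$) is expected to produce the constant $2q$ in front of $\int_M v^q|\Na F|_{\varphi}^2\oo_{\varphi}^n$. For the $\Na\varphi$ piece, I would convert the measure $\oo_\varphi^n=e^F\oo_g^n$ and rewrite $|\Na\varphi|_{\varphi}^2$ in terms of $|\Na\varphi|_g^2$ (losing a trace factor); the Young split then produces the exponential weight $e^B$ and the constant $\frac{2\al^2\la^2 q}{(\al-1)^2}$, with the $(\al-1)^{-2}$ forced by compatibility with the H\"older exponent already used. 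Absorbing the total $\frac{q-1}{q^2}\int_M|\Na v^{q/2}|_{\varphi}^2\oo_{\varphi}^n$ back into the LHS changes the prefactor from $\frac{4(q-1)}{q^2}$ to $\frac{3(q-1)}{q^2}$ and yields (\ref{eq:B01}).

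The main obstacle is coefficient bookkeeping. Getting exactly $2q$ (rather than something quadratic in $\al$) in front of $\int_M v^q|\Na F|_{\varphi}^2$ requires the extra integration by parts on the $\Na F$ cross term, and the factor $(\al-1)^{-2}$ on the $|\Na\varphi|_g^2$ coefficient is dictated by the H\"older splitting that produces $\frac{\al\la}{\al-1}$; the Young and H\"older parameters must therefore be chosen in a coordinated way. Once that coordination is in place, (\ref{eq:B01}) follows by direct collection of terms.
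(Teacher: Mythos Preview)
The paper does not prove this lemma; it simply records the statement with the citation ``cf.\ \cite{[CC3]}, \cite[Lemma 2.8]{[LZZ]}'' and moves on. Your sketch follows the expected strategy from those references --- Yau's $C^2$ inequality for $\td v=n+\Delta_g\varphi$, the product expansion of $\Delta_\varphi(e^G\td v)$, multiplication by $v^{q-1}$, integration by parts, and Young/H\"older to close --- so at the level of architecture you are in line with what the cited proofs do.

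Two points where your outline is loose. First, a sign: from $\Delta_\varphi G=-\al\Delta_\varphi F-\al\la\Delta_\varphi\varphi$ one gets the contribution $-\td f\,v$ (not $+\td f\,v$) in the pointwise lower bound for $\Delta_\varphi v$; the sign flips to $+\td f$ only after multiplying by $-v^{q-1}$ and integrating by parts, so your displayed pointwise inequality has the wrong sign on $\td f$, though the final integrated inequality comes out correctly. Second, and more substantively, your explanation for the coefficient $2q$ in front of $\int v^q|\Na F|_\varphi^2$ does not hold up: a direct Young split of $-2\al v^{q-1}\langle\Na F,\Na v\rangle_\varphi$ with absorption budget $\tfrac{q-1}{2q^2}|\Na v^{q/2}|^2$ produces a constant of order $\al^2/(q-1)$, not $2q$, and your proposed ``further integration by parts using $\Delta_\varphi F=-R+\tr_\varphi\Rc$'' would convert the cross term into $\tfrac{2\al}{q}\int v^q\Delta_\varphi F$, eliminating $|\Na F|^2$ altogether rather than producing it with coefficient $2q$. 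The actual derivation in \cite{[LZZ]} combines the $-v|\Na G|_\varphi^2$ term in $\Delta_\varphi v$ with the cross term via the identity $v\Na G-\Na v=-e^G\Na\td v$ and a specific choice of H\"older exponents tied to $\al>1$; the constants $2q$ and $\tfrac{2\al^2\la^2 q}{(\al-1)^2}$ drop out of that computation. You would need to consult \cite[Lemma 2.8]{[LZZ]} directly to reproduce the precise coefficient bookkeeping, but the skeleton of your argument is correct.
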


    Combining Lemma \ref{lem:v}, Lemma \ref{lem:3.7} with Lemma \ref{lem:interF}, we have the result.

    \begin{lem}\label{lem:key} If $A_{R,2p}$, $B_{R,p+1}$ are bounded for some $p>n$, and $Q_F$ is  bounded, then there exists a constant C depending on $n,\oo_g,Q_F,A_{R,2p},B_{R,p+1},\|\varphi\|_\infty,\|F\|_\infty,\varphi(0)$ and $T$ such that\beqn
        n+\Delta_g\varphi\leq C.
        \eeqn
    \end{lem}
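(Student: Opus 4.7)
The plan is to carry out a Moser iteration on $v=e^{-\alpha(F+\lambda\varphi)}(n+\Delta_g\varphi)$, using Lemma \ref{lem:v} as the elliptic input, Lemma \ref{lem:3.7} to control the sup-in-time of the $L^\kappa$ norm of $v^{q/2}$, and the parabolic Sobolev inequality (Lemma \ref{lem:interF}) to pass to a higher exponent. The new feature compared to the elliptic Moser scheme in Chen–Cheng is that the right-hand side of (\ref{eq:B01}) contains the extra terms $\int |\nabla F|_\varphi^2 v^q$ and $\int e^B|\nabla\varphi|_g^2 v^{q-1}$, and these must be absorbed using the integrability already proven for $|\nabla F|_\varphi$ and $|\nabla\varphi|_g$.

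First, I would multiply the differential inequality (\ref{eq:B01}) through by a factor, integrate in $t\in[0,T)$, and apply H\"older to each of the three terms on the right. For the scalar curvature term, the factor $\tilde f+\frac{\alpha\lambda}{\alpha-1}+\frac1n e^{-F/n}R_g$ is bounded in $L^{2p}$ by the hypothesis on $A_{R,2p}$ together with $\|F\|_\infty<\infty$ from Theorem \ref{theo:main1}. For the gradient-of-$F$ term, I invoke Lemma \ref{lem:F} with $s$ close to $2p$: H\"older with exponents $(s,\ldots)$ splits $\int |\nabla F|_\varphi^2 v^q$ into $\|\nabla F\|_{2s}^2\cdot \|v^{q/2}\|_{\text{something}}^2$, which is then absorbable into the Sobolev exponent. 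For the gradient-of-$\varphi$ term, I use Lemma \ref{lem:3.12} to bound $|\nabla\varphi|_g^2$ by a constant, and the factor $e^B$ by $\|F\|_\infty, \|\varphi\|_\infty$; then $\int v^{q-1}\oo_g^n\leq C\int v^{q-1}\oo_\varphi^n$ up to powers of $e^{-F}$, which is again a lower-order term in the iteration. The net result should read
\beq
\frac{3(q-1)}{q^2}\int_0^T\!dt\int_M |\nabla v^{q/2}|_\varphi^2\,\oo_\varphi^n \leq C q\int_0^T\!dt\int_M (v^{qr_1}+v^{q-1}+1)\,\oo_\varphi^n,\no
\eeq
for some exponent $r_1<2$ determined by $s<2p$ and $d=p/(p-1)$.

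Second, the sup-in-time piece $\sup_{[0,T)}\|v^{q/2}\|_{\kappa,t}^{(1-2/\gamma)\kappa}$ required by Lemma \ref{lem:interF} is controlled by Lemma \ref{lem:3.7}, which bounds $\|v^{q/2}\|_{\kappa,t}^\kappa$ by the initial value plus space-time integrals of $v^{q\kappa r}$, $v^{q\kappa b}$ and $v^{q\kappa}$ for the appropriate dual exponents. Combining this with the energy inequality from the previous paragraph and the parabolic Sobolev inequality, we arrive at an iteration of the form
\beq
\|v\|_{\beta q/2}\leq (Cq)^{a_1} \|v\|_{q\max\{r,\kappa\}}^{a_2}\no
\eeq
for constants $\beta>2\max\{\kappa,r_1,r\}$, exactly analogous to the argument already used in the proof of Lemma \ref{theo:main3} and Lemma \ref{lem:F}. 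Iterating this, and using Lemma \ref{theo:main3} to start the iteration from a finite $\|v\|_{q_0}$ with $q_0$ as large as needed, yields $\|v\|_\infty\leq C$, hence $n+\Delta_g\varphi\leq C$ by definition of $v$.

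The main obstacle I expect is checking that the exponents $\kappa,\gamma,\beta$ can be chosen consistently so that (i) the Sobolev–interpolation conditions (\ref{eq:0.0}) are satisfied, (ii) $\beta/2$ strictly exceeds every exponent appearing on the right-hand side (so the iteration genuinely improves integrability), and (iii) the powers of $\|\nabla F\|_{2s}$ and $\|\tilde v^{n-1}\|$ needed from Lemma \ref{lem:F} and Lemma \ref{theo:main3} respectively lie within the ranges already proven finite. This is the same bookkeeping problem that Lemma \ref{lem:H} resolved for the gradient estimate of $F$, and I expect a parallel elementary lemma — exploiting $p>n$ to guarantee $r=\frac{2p}{2p-1}<\frac{2n}{2n-1}$ and $d=\frac{p}{p-1}<\frac{n}{n-1}$ — to do the job here as well. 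Once this range check is done, the Moser iteration is standard and the bound (\ref{eq:1.6}) together with the integrability of $\tilde v$ close the argument.
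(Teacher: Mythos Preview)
Your proposal is correct and follows essentially the same route as the paper: combine Lemma~\ref{lem:v} with Lemma~\ref{lem:3.7} and the parabolic Sobolev inequality, handle the $|\nabla\varphi|_g^2$ term via Lemma~\ref{lem:3.12} (the paper additionally uses $n+\Delta_g\varphi\geq ne^{F/n}$ to turn $v^{q-1}$ into $Cv^q$), and absorb the $|\nabla F|_\varphi^2$ term by H\"older against the $L^s$ bound from Lemma~\ref{lem:F} with $2n<s<2p$. The exponent bookkeeping is in fact simpler than you anticipate --- no full analogue of Lemma~\ref{lem:H} is needed, only the single check that the H\"older dual $h=\frac{s}{s-1}<\frac{2n}{2n-1}$ satisfies $\frac{2h-2}{1-2/\gamma}<\frac{2}{1+2/\gamma}$, which holds once $\gamma$ is taken close enough to $\frac{2n}{n-1}$.
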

    \begin{proof}Since $n+\Delta_g \varphi\geq n e^{\frac Fn}$, we have
        \beq
        v^{q-1}=e^{\alpha(F+\lambda\varphi)}\frac {v^q}{n+\Delta_g\varphi}\leq \frac 1n e^{\alpha(F+\lambda\varphi)-\frac Fn}\,v^q.\label{eq:B08}
        \eeq
        Taking $z=v^{\frac q2}$ and $\al=2$ in  the inequality (\ref{eq:B01}), we have
        \beqn  &&
        \frac {3(q-1)}{q^2}\int_{0}^{T}\;dt\int_M\;  |\Na z|_{\varphi}^2\;\oo_{\varphi}^n\leq \int_{0}^{T}\;dt\int_M\; \Big(\td f+2\la+\frac 1n e^{-\frac Fn}R_g\Big)z^2\oo_{\varphi}^n\no\\
        &&+2q\int_{0}^{T}\;dt\int_M\;   z^{2}|\Na F|_\varphi^2\,\oo_\varphi^n+8\la^2q\int_{0}^{T}\;dt\int_M\; e^B v^{q-1}|\Na \varphi|_g^2\,\oo_g^n\no\\
        &\leq&\int_{0}^{T}\;dt\int_M\; \Big(\td f+2\la+\frac 1n e^{-\frac Fn}R_g\Big)z^2\oo_{\varphi}^n\no\\
        &&+2q\int_{0}^{T}\;dt\int_M\;   z^{2}|\Na F|_\varphi^2\,\oo_\varphi^n+C(n, \oo_g, \|F\|_\infty, \|\varphi\|_\infty)q\int_{0}^{T}\;dt\int_M\;  v^q\,\oo_\varphi^n,\no
        \eeqn where we used (\ref{eq:B08}) and Lemma \ref{lem:3.12} in the last inequality. Thus, we have
        \beq
        \frac {3(q-1)}{q^2}\int_{0}^{T}\;dt\int_M\;  |\Na z|_{\varphi}^2\;\oo_{\varphi}^n\leq q\int_{0}^{T}\;dt\int_M\; Gz^2\,\oo_{\varphi}^n+
        2q\int_{0}^{T}\;dt\int_M\;   z^{2}|\Na F|_\varphi^2\,\oo_\varphi^n,\no
        \eeq where
        \beq
        G=\td f+2\la+\frac 1n e^{-\frac Fn}R_g+C(g, \|F\|_\infty, \|\varphi\|_\infty).\no
        \eeq
        By Lemma \ref{lem:interF}, we have
        \beqn
        \int_{0}^{T}\;dt\int_M\;|z|^\beta\;\oo_{\varphi}^n&\leq& C(n,\oo_g,\|F\|_\infty,\gamma)q^2\sup_{[0,T)}\|z\|_{\kappa,t}^{(1-\frac{2}{\gamma})\kappa}\int_{0}^{T}\;dt\int_{M}\;\Big(G+|\nabla F|_\varphi^2\Big)z^2\;\oo_{\varphi}^n.\no\\
        \quad \label{eq:2.1}
        \eeqn
        By Lemma \ref{lem:3.7}, we have
    \beqn
        \|z\|_{\kappa,t}^{\kappa}-\|z\|_{\kappa,0}^{\kappa}\leq Cq\kappa\Big(\|z\|_{r\kappa}^{\kappa}+\|z\|_{\kappa b}^{\kappa}+\|z\|_{2\kappa}^{\kappa}\Big)\leq Cq\kappa\|z\|_{2\kappa}^{\kappa}.\label{eq:2.2}
        \eeqn
        According to Lemma \ref{lem:F}, $|\Na F|_\varphi^2\in L^{s}(M\times[0,T),\oo_\varphi^n\wedge dt)$ for $2n<s<2p$. Combining (\ref{eq:2.1}) with (\ref{eq:2.2}), we have
        \beqn
        \|z\|_{\beta}&\leq& C^{\frac 1\beta}q^{\frac 2\beta}\Big(\|z\|_{\kappa,0}^\kappa+Cq\|z\|_{2\kappa}^{\kappa}\Big)^{\frac {\theta}{{\kappa}}}\Big(\|z\|_{2r}^2+\|z\|_{2h}^{2}\Big)^{\frac 1\beta}\no\\
        &\leq& Cq^{\frac 2\beta}q^{\frac\theta\kappa}\|z\|_{2\kappa}^{\theta}\Big(\|z\|_{2r}^2+\|z\|_{2h}^{2}\Big)^{\frac 1\beta},\no
        \eeqn
        where $C$ depends on $n,\oo_g,\kappa,\gamma,A_{R,2p},B_{R,p+1},\|\varphi\|_\infty,\|F\|_\infty$ and $\varphi(0) $. Here, $h$ and $s$ satisfy the equality $\frac 1h+\frac 1s=1$. We need that
        \beqn
        \beta>\max\Big\{2\kappa,2r,2h\Big\},\no
        \eeqn
        or equivalently,
        \beq
\max\Big\{\frac{2r-2}{1-\frac2\gamma},\frac{2h-2}{1-\frac2\gamma}\Big\}<\kappa<\frac{2}{1+\frac 2\gamma}.\no
        \eeq
        Note that $s< 2p$,  we need the inequality
        \beqn
        \frac{2h-2}{1-\frac 2\gamma}<\frac{2}{1+\frac 2\gamma}.\label{eq:B09}
        \eeqn
        We can choose $\gamma $ close to $\frac{2n}{n-1}$ such that (\ref{eq:B09}) holds. Then we have
        \beqn
        \|v\|_{\frac{q\beta}{2}}\leq C^{\frac{2}q}q^{\frac 2q(\frac 2\beta+\frac\theta\kappa)}\|v\|_{q\max\{h,\kappa\}}.\label{eq:3.118}
        \eeqn
        Letting $\theta_2=\frac{\beta}{2\max\{h,\kappa\}}>1$ and taking  $q_n=\frac{2}{\max\{h,\kappa\}}\theta_2^n$, the inequality (\ref{eq:3.118}) implies that
        \beqn
        \|v\|_{q_{n+1}\max\{h,\kappa\}}\leq C^{\frac 2{q_n}}q_n^{\frac 2{q_n}(\frac 2\beta+\frac \theta\kappa)}\|v\|_{q_n\max\{h,\kappa\}}.\no
        \eeqn
          Since $h<\frac{2n}{2n-1}<2$ and $q_0=\frac 2{\max\{h,\kappa\}}>1$, the standard Moser iteration shows
         \beqn
         \|v\|_\infty\leq C\|v\|_{q_0\max\{h,\kappa\}}.\label{eq:3.120}=C\|v\|_2.\no
         \eeqn
           Since $\|v\|_2$ is bounded by Lemma \ref{theo:main3},  we know that $v$ is bounded and the lemma is proved.

    \end{proof}
    \section{Proof of   Theorem \ref{theo:1.1}}
    \begin{proof}[Proof of   Theorem \ref{theo:1.1}]
    Firstly we show that $Q_F$ is bounded along the Calabi flow. Without loss of generality, we may assume that $\varphi(0)\in\cH_0$. Then we have that $\varphi(t)\in\cH_0$ by (\ref{eq:3.10}). According to Lemma 4.4 of \cite{[CC2]}, we have
    \beqn
    |J_{-\Rc}(\varphi)|\leq C(n, \oo_g)d_1(0,\varphi).
    \eeqn
    Combining (4.1) with the proof of  Lemma \ref{Lem:3.4}, we conclude that $J_{-\Rc}(\varphi)$ is  uniformly bounded along Calabi flow. Since $\int_MF\,\oo_\varphi^n=\cK(\varphi)-J_{-\Rc}(\varphi)$, we know that $\int_MF\,\oo_\varphi^n$ is uniformly bounded under Calabi flow. Therefore, $Q_F$ is bounded.

    By the assumption, we have that $A_{R,2p}^n,B_{R,p+1}^n$ are bounded for $p> n$. Combining this with the boundedness of $Q_F$,
    we know that  $\|\varphi\|_\infty$ and $\|F\|_\infty$ are bounded by Theorem \ref{theo:main1}. Moreover, combining Lemma \ref{theo:main3}, Lemma \ref{lem:F},  Lemma \ref{lem:3.12} and Lemma \ref{lem:key} we conclude that $\|n+\Delta\varphi\|_\infty$ is bounded. Therefore, there exists a constant $C>0$ such that for any $t\in [0, T)$
    \beqn
    \frac 1C\oo_g\leq \oo_\varphi\leq C\oo_g. \label{eq:1.8}
    \eeqn
    Note that $F$ satisfies the parabolic equation
    \beq
    \pd Ft-\Delta_{\varphi}F=K, \quad K:=\Delta_{\varphi}R+R-\tr_{\varphi}Ric(\oo_g).
    \eeq
    By the assumption of Theorem \ref{theo:1.1}, the inequality (\ref{eq:B11}) and Lemma \ref{lem:key}, we have
    \beqs
    \int_0^T\,dt\int_M\;|K|^{p+1}\,\oo_g^n&\leq& C(p, \|F\|_{\infty})\int_0^T\,dt\int_M\;\Big(|\Delta_{\varphi}R|^{p+1}+|R|^{p+1}+\td v^{(n-1)(p+1)}\Big)\,\oo_{\varphi}^n\\
    &\leq&C,\quad p>n.
    \eeqs
    Since $\oo_{\varphi}$ satisfies (\ref{eq:1.8}), by the H\"older estimates of parabolic equations (cf. Theorem \ref{theo:app2} in the appendix), we know that $F\in C^{\al}(M\times [\frac 12T, T), \oo_g)(\al\in (0, 1))$.
   This together with (\ref{eq:1.8}) implies that $\varphi\in C^{2, \al'}(M\times [\frac 12T, T), \oo_g)$ for any $\al'\in (0, \al)$ (cf. Chen-Wang \cite{[ChenWang]},  Y. Wang \cite{[WangYu]}).
    Therefore, by He \cite{[He4]} the Calabi flow can be extended past time $T$. The theorem is proved.

    \end{proof}

\begin{appendices}
\section{The H\"older estimates for parabolic  equations}
In the appendix, we recall the H\"older estimates of parabolic equations. The readers are referred to Lieberman \cite[Section 13, Chapter VI]{[Lieb]}, Guerand \cite[Corollary 1.2]{[Gue]}, or Vasseur \cite[Theorem 18]{[Vas]} for details.

We use the notations in Guerand \cite{[Gue]}. Let $r>0$ and $x_0\in \RR^d$. We denote by $B_r(x_0)$ the ball of radius $r$ centered at $x_0$. For $(x_0, t_0)\in \RR^d\times \RR$ we define the parabolic cylinder $Q_r(x_0, t_0)=B_r(x_0)\times (t_0-r^2, t_0)$ and $Q_r=B_r(0)\times (-r^2, 0)$.

\begin{theo}\label{theo:app}Let $u: Q_2\ri \RR$ be a solution of
\beq
\pd ut=\Na_x\cdot (A\Na_x u) +B\cdot\Na_xu+g,
\eeq where $A(x, t), B(x, t)$ and $g(x, t)$ satisfy the following conditions:
\begin{enumerate}
  \item[(1).] $A(x, t)$ is a bounded measurable matrix and satisfies an ellipticity condition for two positive constants $\la, \La$,
\beq
0<\la I\leq A\leq \La I,
\eeq
  \item[(2).] $B(x, t)$ is bounded, measurable and $|B|\leq \La,$
  \item[(3).] $g(x, t)$ is bounded, measurable  and satisfies
  \beq
  \|g\|_{ L^q(Q_2)}\leq 1, \quad q>\max\Big\{2, \frac {d+2}{2}\Big\}.\label{eq:a1}
  \eeq

\end{enumerate}
 Then we have
 \beq
 \|u\|_{C^{\al}(Q_1)}\leq C(d, \la, \La)(\|u\|_{L^2(Q_2)}+1), \label{eq:a2}
 \eeq where $\al$ depends only on $d, \la$ and $\La.$

\end{theo}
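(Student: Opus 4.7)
The plan is to prove Theorem \ref{theo:app} by the De Giorgi--Nash--Moser scheme for divergence-form parabolic equations, adapted to accommodate the first-order term $B\cdot\Na_x u$ and the source $g$. The parabolic scaling exponent
\[
\beta := 2-\frac{d+2}{q}>0,
\]
positive by the hypothesis $q>(d+2)/2$, is what makes the source effectively subcritical and drives every dimensional count below. The hypothesis $q>2$, in turn, is what makes the H\"older estimate on $\int g\cdot (u-k)_+$ give a meaningful gain in the level-set measure. Throughout, by a standard normalization one may reduce to $\|u\|_{L^2(Q_2)}\leq 1$.

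First I would establish the Caccioppoli inequality: testing the equation against $\eta^2(u-k)_+$ for a smooth cutoff $\eta$ supported in $Q_2$ and a level $k$, using ellipticity $A\geq \la I$, absorbing the drift contribution via $|B|\leq\La$ and Young's inequality, and bounding the source term by H\"older together with $\|g\|_{L^q}\leq 1$, one obtains
\[
\sup_t\!\int(u-k)_+^2\eta^2\,dx\,+\,\la\!\iint|\Na(u-k)_+|^2\eta^2\leq C\!\iint(u-k)_+^2\bigl(|\Na\eta|^2+\eta|\p_t\eta|+\eta^2\bigr)+C\,|\{u>k\}\cap \Supp\eta|^{1-2/q}.
\]
Combining this with the parabolic Sobolev embedding $\|v\|_{L^{2\chi}(Q_r)}^2\leq C(\sup_t\int v^2+\iint|\Na v|^2)$ with $\chi=1+2/d$ gives a reverse-H\"older recursion on nested cylinders; iterating in Moser fashion on decreasing radii $r_n\searrow 3/2$ and increasing exponents $p_n=2\chi^n$ produces the local boundedness $\sup_{Q_{3/2}}|u|\leq C(d,\la,\La)(\|u\|_{L^2(Q_2)}+1)$. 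After this, the problem reduces to proving $C^{\al}$ regularity for a solution bounded by $1$ on $Q_{3/2}$.

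The core step is the oscillation-decay estimate: there exist $\theta\in(0,1)$ and $\gamma>0$ depending only on $d,\la,\La$ such that, on any subcylinder $Q_r\subset Q_{3/2}$,
\[
\osc_{Q_{r/2}} u\leq \theta\,\osc_{Q_r}u+Cr^{\beta}\|g\|_{L^q(Q_r)}.
\]
After rescaling we may assume $|u|\leq 1$ on $Q_1$ and, say, $|\{u\leq 0\}\cap Q_1|\geq |Q_1|/2$. The parabolic De Giorgi first lemma, run with levels $k_n=1-2^{-n}\delta$ and nested cylinders $Q_{r_n}$, yields the recursion $A_{n+1}\leq C^n A_n^{1+2/(d+2)}$ for the superlevel measures $A_n=|\{u>k_n\}\cap Q_{r_n}|$; this recursion closes provided $A_0$ is small enough, which is achieved by choosing $\delta$ small, and forces $u\leq 1-\delta$ on $Q_{1/2}$.

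The principal obstacle, and the step most specific to the parabolic setting, is transferring the one-sided measure information from the original cylinder $Q_1$ to a forward subcylinder on which the De Giorgi iteration can actually be run: since heat propagates only forward in time, one cannot symmetrize as in the elliptic case. I would resolve this by the standard parabolic intermediate-value / expansion-of-positivity lemma, testing the equation against a logarithmic cutoff of the form $\log^+\!\bigl(\tfrac{1+\ee}{1+\ee-u}\bigr)\eta^2$: ellipticity produces a gradient term with a good sign, and the resulting time-uniform $L^{\infty}$ bound on the log transforms ``$\{u\leq 0\}$ is a large set in $Q_1$'' into ``$\{u\leq 1-\delta_1\}$ is a large set in \emph{every} time slice of a forward sub-cylinder,'' which is exactly the hypothesis needed for the De Giorgi iteration above. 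Iterating the oscillation decay on the dyadic cylinders $Q_{2^{-n}}$ with $\al:=\min\{-\log_2\theta,\beta\}\in(0,1)$ then gives $\osc_{Q_{2^{-n}}}u\leq C\,(2^{-n})^{\al}$, hence $u\in C^{\al}(Q_1)$ with the quantitative bound (\ref{eq:a2}) and with $\al$ depending only on $d,\la,\La$, as claimed.
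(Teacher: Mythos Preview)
Your sketch is a faithful outline of the De Giorgi--Nash--Moser argument for divergence-form parabolic equations with an $L^q$ source, and it is essentially correct. Note, however, that the paper does \emph{not} supply its own proof of this theorem: the appendix simply quotes the result and refers the reader to Lieberman \cite[Section 13, Chapter VI]{[Lieb]}, Guerand \cite[Corollary 1.2]{[Gue]}, and Vasseur \cite[Theorem 18]{[Vas]}. Your approach is precisely the De Giorgi scheme presented in the Vasseur and Guerand references, so in that sense it matches the paper's (outsourced) argument.

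One small internal inconsistency worth flagging: you set $\al=\min\{-\log_2\theta,\beta\}$ with $\beta=2-(d+2)/q$, so your H\"older exponent depends on $q$, yet in your final sentence you assert that $\al$ depends only on $d,\la,\La$. In the standard De Giorgi argument the exponent does inherit a $q$-dependence through the source term; the theorem statement in the paper suppresses this, but your own bookkeeping should be consistent. For the application in Section~4 of the paper this makes no difference, since $q=p+1$ is fixed.
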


We can easily remove the bound (\ref{eq:a1}). In fact, letting $\td g=K^{-1}g$ with $K:=\|g\|_{ L^q(Q_2)}$ and $\td u=K^{-1}u$,  by (\ref{eq:a2}) we have
\beq
 \|\td u\|_{C^{\al}(Q_1)}\leq C(d, \la, \La)(\|\td u\|_{L^2(Q_2)}+1).
 \eeq Therefore, we have
 \beq
 \|u\|_{C^{\al}(Q_1)}\leq C(d, \la, \La)(\| u\|_{L^2(Q_2)}+\|g\|_{ L^q(Q_2)}).
 \eeq

\begin{theo}\label{theo:app2}Let $(M, g)$ be a Riemannian manifold of dimension $d$ and $Q_r=B_r(x_0)\times (t_0-r^2, t_0)$, where $B_r(x_0)\subset M$ denotes the ball  centered at $x_0\in M$ of radius $r>0$ with respect to the metric $g$. If $u: Q_2\ri \RR$ be a solution of
\beq
\pd ut=\Delta_{h}u +f,
\eeq where $h(x, t)$ and $f(x, t)$ satisfy the following conditions:
\begin{enumerate}
  \item[(1).] $h(x, t)$ is a metric equivalent to $g$, i.e. there exist two constants $\la, \La>0$ such that
\beq
0<\la g\leq h\leq \La g,
\eeq

  \item[(2).] $f(x, t)$ is a bounded, measurable function  and satisfies $f\in L^q(Q_2)$ with $q>\max\{2, \frac {d+2}{2}\}$.

\end{enumerate}
 Then we have
 \beq
 \|u\|_{C^{\al}(Q_1)}\leq C(d, \la, \La, g)(\|u\|_{L^2(Q_2)}+\|f\|_{ L^q(Q_2)}), \label{eq:a2}
 \eeq where $\al$ depends only on $d, \la$ and $\La.$

\end{theo}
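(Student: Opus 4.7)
The plan is to reduce Theorem \ref{theo:app2} to its Euclidean counterpart Theorem \ref{theo:app} by working in local coordinates and exploiting the compactness of $\overline{B_1(x_0)}$ in $M$. First I would cover $\overline{B_1(x_0)}$ by a finite collection of coordinate charts $\{(U_\al,\phi_\al)\}_{\al=1}^N$ chosen so that each $U_\al\subset B_2(x_0)$ and, on each $\phi_\al(U_\al)\subset\RR^d$, the coordinate representation of $g$ is uniformly equivalent to the Euclidean metric. The number of charts and the equivalence constants depend only on $g$ through the geometry of $B_2(x_0)$. Since $h$ is equivalent to $g$ by hypothesis, the coordinate matrix $(h^{ij})$ is bounded, measurable, and uniformly elliptic on each $\phi_\al(U_\al)$, with constants depending on $\la,\La,g$.

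In each chart I would rewrite the equation in divergence form. Using
\beq
\Delta_h u=\frac{1}{\sqrt{\det h}}\p_i\bigl(\sqrt{\det h}\,h^{ij}\p_j u\bigr),\no
\eeq
the PDE $\p_t u=\Delta_h u+f$ becomes
\beq
\si\,\p_t u=\p_i\bigl(A^{ij}\p_j u\bigr)+\si\,f,\qquad A^{ij}:=\sqrt{\det h}\,h^{ij},\ \ \si:=\sqrt{\det h},\no
\eeq
where $A=(A^{ij})$ is measurable, bounded, and uniformly elliptic, and $\si$ is bounded between two positive constants depending on $\la,\La,g$. This reduction uses only the measurability of $h$ (no Christoffel symbols are needed), which is essential because $h$ is not assumed to be differentiable.

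Next, to bring the equation into the hypothesis of Theorem \ref{theo:app}, I would either divide by $\si$ to get $\p_t u=\si^{-1}\p_i(A^{ij}\p_j u)+f$ and verify that the Nash--De Giorgi--Moser scheme underlying Theorem \ref{theo:app} extends verbatim to a bounded weight $\si^{-1}$ on the right-hand side, or I would appeal directly to the weighted version of the Hölder estimate for divergence-form parabolic equations that is standard in the literature (Lieberman \cite{[Lieb]}, Ch.~VI, \S13). In either case the resulting Hölder exponent $\al$ and constant $C$ depend only on $d,\la,\La$ and $g$. The hypothesis $q>\max\{2,(d+2)/2\}$ is exactly what is needed to absorb the perturbation term $\si f$ (or $f$) via the Morrey/parabolic Sobolev embedding. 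After a fixed rescaling that sends the chosen Euclidean parabolic cylinder to $Q_2$, Theorem \ref{theo:app} applies and yields a local Hölder estimate around each point $(y_0,s_0)\in Q_1(x_0,t_0)$ of the form $\|u\|_{C^\al}\leq C(\|u\|_{L^2(Q_2)}+\|f\|_{L^q(Q_2)})$.

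Finally, I would patch together the local estimates. Covering $\overline{Q_1(x_0,t_0)}$ by finitely many parabolic cylinders supported in the coordinate charts, and using that each chart map $\phi_\al$ is bi-Lipschitz with constants depending on $g$, a standard argument shows that a function Hölder continuous on each chart with a uniform bound is Hölder continuous on the union with a controlled constant; this yields the global estimate (\ref{eq:a2}). The main technical obstacle is handling the weight $\si=\sqrt{\det h}$ in front of $\p_t u$: Theorem \ref{theo:app} is stated without such a weight, so one must either verify that the proof (via energy estimates, isoperimetric inequalities, and the De Giorgi oscillation lemma) is insensitive to a bounded positive weight on the time derivative, or quote a weighted version such as Lieberman's. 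A minor additional subtlety is that the parabolic balls $Q_r(x_0,t_0)$ in the statement are defined with respect to the Riemannian distance on $M$, so one must verify at the localization step that Riemannian parabolic cylinders are comparable to Euclidean ones in the chosen coordinates; this is automatic from the equivalence of $g$ with the Euclidean metric in each chart.
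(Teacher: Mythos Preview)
Your proposal is correct and follows essentially the same route as the paper: the paper's proof is two sentences---choose a good coordinate chart with respect to $g$ and invoke Theorem~\ref{theo:app} by ``the standard argument,'' referring to Hebey~\cite{[Heb]} and Metsch~\cite{[Me]} for details. Your write-up simply unpacks what that standard argument entails (finite chart cover, divergence-form rewriting, patching), and you correctly flag the bounded weight $\si=\sqrt{\det h}$ on $\partial_t u$ as the only nontrivial point, which the paper absorbs into its citation of Lieberman~\cite{[Lieb]}.
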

\begin{proof} We can choose a good coordinate chart with respect to the metric $g$, and the theorem follows from Theorem \ref{theo:app} by the standard argument. See, for example, Hebey \cite{[Heb]} or Metsch \cite{[Me]} for more details.

\end{proof}

\section{An elementary lemma}
In this section, we prove an elementary result which is used in Lemma \ref{lem:F}. In the proof of Lemma \ref{lem:F}, we have to choose suitable constants such that the iteration argument based on the inequality (\ref{eq:B10})  works. For readers' convenience, we summarize the restrictions on the constants as follows:
 \begin{enumerate}
   \item[(1).] The complex dimension $n$ of $M$ satisfies $n\geq 2$, and we assume $p>n;$
   \item[(2).]  $\te\in (0, 1)$ satisfies $(1-\te)\bb=2;$
       \item[(3).] $r$ satisfies $\frac 1{2p}+\frac 1r=1$;
       \item[(4).] $a$ satisfies $\frac 1a+\frac 1{2p}+\frac 1{p+1}=1$;

         \item[(5).] $b$ satisfies $\frac 1b+\frac 1{p+1}=1 $;

          \item[(6).] $d$ satisfies $\frac 1d+\frac 1p=1$;
          \item[(7).] $\ga\in \Big(2, \frac {2n}{n-1}\Big)$.  Since $p>n$, we choose $\ga$  close to $\frac {2n}{n-1}$ such that $\frac {2p}{p-1}<\ga<\frac {2n}{n-1}$. Therefore, we have
                            \beq \frac{r-1}{1-\frac 2\gamma}<\frac{1}{1+\frac 2\ga}. \label{eq:D4} \eeq
          \item[(8).] $\ka\in (0, 2)$ ;
          \item[(9).] We define $\bb=2+\Big(1-\frac 2{\ga}\Big)\ka$. We can check that $\bb$ satisfies $2<\bb<\ga.$

 \end{enumerate}
  We define
  \beqn
  A(p, q, \ga)&:=&\max\Big\{\frac{2r-2}{1-\frac2\ga},\frac{2}{2q+1},\frac{4qd-4q-2}{(2q+1)(1-\frac 2\ga)}\Big\},\\
  B(p, q, \ga)&:=&\min\Big\{\frac{1}{2d-1+\frac 2\ga}(2+\frac{4d}{2q+1}),\frac{1}{2r-1+\frac 2\ga}(2+\frac{2r}{2q+1}),\frac{2}{1+\frac 2\gamma}\Big\}.
  \eeqn With these notations, we have the result.
   \begin{lem}\label{lem:B1}
If $q$ satisfies
  \beq
  \frac{1+\frac{2d}{\ga}}{2(d-1)}\leq q\leq\frac{r}{2(d-r)},
  \eeq then we have
  \begin{enumerate}

    \item[(1).] $A(p, q, \ga)=\frac{2r-2}{1-\frac 2\ga}$;
    \item[(2).] $B(p, q, \ga)=\frac{1}{2d-1+\frac 2\ga}(2+\frac {4d}{2q+1}) $;
     \item[(3).] $0<A(p, q, \ga)<B(p, q, \ga)<2$, and we can choose $\kappa$ to be any number in the interval $(A(p, q, \ga), B(p, q, \ga))$;

\item[(4).] If $\kappa\to \frac{2r-2}{1-\frac 2\ga}$ and $q\to\frac{r}{2(d-r)}$, we have $\beta(q+\frac12)\to 2p$.
  \end{enumerate}

  \end{lem}

  \begin{proof}The proof is divided into several steps.

(1). We show that if $q$ satisfies  $\frac {2-\frac 2\ga-r}{2(r-1)}\leq q\leq\frac r{2(d-r)}$, then we have
    \beq
    A(p, q, \ga)=\frac{2r-2}{1-\frac 2\ga}.\label{eq:3.80a}
    \eeq
 In fact, if we solve the following equation for $q$
    \beqn
    \frac{2}{2q+1}=\frac{2r-2}{1-\frac 2\ga},\label{eq:D1}
    \eeqn
 then  we have $q=\frac{2-\frac 2\ga-r}{2(r-1)}$. Therefore, since the function $f(q):=\frac{2}{2q+1}$ is decreasing in $q$, we have that for $q>\frac{2-\frac 2\ga-r}{2(r-1)}$ the inequality holds
 \beq \frac 2{2q+1}<\frac {2r-2}{1-\frac 2\ga}. \label{eq:D2}\eeq  Similarly, if we solve the following equation for $q$
    \beq
    \frac{4qd-4q-2}{(2q+1)(1-\frac 2\ga)}=\frac {2r-2}{1-\frac 2\gamma},\no
    \eeq
    then we have $q=\frac {r}{2(d-r)}$. We can check that if $q$ satisfies $q<\frac {r}{2(d-r)}$, then
  \beq
  \frac{4qd-4q-2}{(2q+1)(1-\frac 2\ga)}<\frac {2r-2}{1-\frac 2\gamma}. \label{eq:D3}
  \eeq
  Note that
  \beq
  \frac {r}{2(d-r)}-\frac{2-\frac 2\ga-r}{2(r-1)}=\frac {2p-1}{\ga}>0,\no
    \eeq
  we have that (\ref{eq:3.80a}) follows from (\ref{eq:D2}) and  (\ref{eq:D3}).

 (2). We show that if $q\geq \frac{1+\frac{2d}{\ga}}{2(d-1)}$, then  we have
    \beq  B(p, q, \ga)=\frac{1}{2d-1+\frac 2\ga}\Big(2+\frac {4d}{2q+1}\Big).\label{eq:3.82}
    \eeq
  For simplicity, we define two functions of $q$:
  \beq
  f_1(q)=   \frac{1}{2d-1+\frac 2\ga}\Big(2+\frac{4d}{2q+1}\Big),\quad f_2(q)=\frac{1}{2r-1+\frac 2\ga}\Big(2+\frac{2r}{2q+1}\Big).
  \eeq By definition, we have
  \beq
  B(p, q, \ga)=\max\Big\{f_1(q), f_2(q), \frac{2}{1+\frac 2\gamma}\Big\}.
  \eeq
    Consider the following equation for $q$:
    \beq
    f_1(q)=\frac{2}{1+\frac 2\gamma}.
    \eeq
    Then we have $q=\frac{1+\frac{2d}{\ga}}{2(d-1)}$. Since $f_1(q)$ is  decreasing in $q$, we have $f_1(q)<\frac{2}{1+\frac 2\gamma}$ when $q>q_1:=\frac{1+\frac{2d}{\ga}}{2(d-1)}$. Similarly, we solve the following equation for $q$
    \beq
    f_2(q)=\frac{2}{1+\frac 2\gamma}.\no
    \eeq
    Then we have $q=\frac{2-r(1-\frac 2\ga)}{4(r-1)}$. Since $f_2(q)$ is  decreasing in $q$, we have $f_2(q) <\frac{2}{1+\frac 2\gamma}$ when $q$ satisfies $q>q_2:=\frac{2-r(1-\frac 2\ga)}{4(r-1)}$.  Since $r=\frac{2d}{d+1}$ by the assumption, we have $q_1=q_2$. Note that $f_1(q)$ and $f_2(q)$ are continuous functions on $(0, \infty)$, and the equation $f_1(q)=f_2(q)$ has only one solution $q=q_1$ on $(0, \infty)$. Therefore, $f_1(q)-f_2(q)$ is globally positive or globally negative on $(0, \infty).$ Note that
  \beq \lim_{q\to\infty}f_1(q)=
    \frac{2}{2d-1+\frac 2\ga}<\lim_{q\to\infty}f_2(q)=\frac{2}{2r-1+\frac 2\ga},\no\eeq
 We have that $f_2(q)>f_1(q)$ for all $q>q_1$. This implies that for $q>q_1$ we have $
  B(p, q, \ga)=f_1(q)
 $ and the equality (\ref{eq:3.82}) holds.

  (3).  By the assumption $n\geq 2$ and $\frac{2r-2}{1-\frac 2\gamma}<\frac{2}{1+\frac 2\ga}$, we have
  \beq
  \frac{2-\frac 2\ga-r}{2(r-1)}<\frac{1+\frac{2d}{\ga}}{2(d-1)}<\frac {r}{2(d-r)}.
  \eeq
    Therefore, if $q$ satisfies $\frac{1+\frac{2d}{\ga}}{2(d-1)}\leq q\leq\frac {r}{2(d-r)}$, then (1) and (2) imply that
    \beqn A(p, q, \ga)&=&\frac{2r-2}{1-\frac2\ga},\no\\ B(p, q, \ga)&=&\frac{1}{2d-1+\frac 2\ga}\Big(2+\frac {4d}{2q+1}\Big)\no.
    \eeqn
  In order to show $  A(p, q, \ga)<B(p, q, \ga) $, we calculate
  \beq\frac{1}{2d-1+\frac 2\ga}\Big(2+\frac{4d}{2q+1}\Big)\geq \frac{1}{2d-1+\frac 2\ga}\Big(2+\frac{4d}{\frac{d}{d-r}}\Big)=\frac{1}{2d-1+\frac 2\ga}(2+4d-4r).   \eeq
  Letting $D=1-\frac 2\ga$ and using $d=\frac r{2-r}$,  we have
  \beqn &&
  \frac{1}{2d-1+\frac 2\ga}(2+4d-4r)-\frac{2r-2}{1-\frac2\ga}\no\\
  &=&\frac {2+4d-4r}{2d-D}-\frac {2r-2}{D}\no\\
  &=&2\cdot \frac {(2d-r)D-2d(r-1)}{(2d-D)D}\no\\
  &=&\frac {2r^2}{D(2d-D)(2-r)}\Big(D-\frac {2(r-1)}{r}\Big).\label{eq:D6}
  \eeqn
  Note that
  \beqn
  2-r&=&\frac {2p-2}{2p-1}>0,\label{eq:D7}\\
  2d-D&=&\frac {2p}{p-1}-\Big(1-\frac 2\ga\Big)>0,\label{eq:D8}\\
  D-\frac {2(r-1)}{r}&=& 1-\frac 2\ga-\frac {2(r-1)}{r}=\frac 2r-\frac 2{\ga}-1>0,\label{eq:D9}
  \eeqn where we used the assumption (\ref{eq:D4}) in the last inequality. It follows from (\ref{eq:D7})-(\ref{eq:D9}) and (\ref{eq:D6}) that
   the right-hand side of (\ref{eq:D6}) is positive. This implies that  $A(p, q, \ga)<B(p, q, \ga) $ when  $q$ satisfies $\frac{1+\frac{2d}{\ga}}{2(d-1)}\leq q\leq\frac {r}{2(d-r)}$.

  (4). Since we assume $\ka\in (0, 2)$, we show that if $q$ satisfies $\frac{1+\frac{2d}{\ga}}{2(d-1)}\leq q\leq\frac {r}{2(d-r)}$ we have
\beq
\Big( A(p, q, \ga), B(p, q, \ga)\Big)\subset (0, 2). \label{eq:D5}
\eeq
It suffices to show that $B(p, q, \ga)< 2$. In fact,  $B(p,q,\ga)<2$ is equivalent to the inequality
\beqn
1+\frac{d}{2q+1}<d+\frac 1{\ga}.\label{eq:D10}
\eeqn
Note that
\beqn
d+\frac 1{\ga}-\frac{d}{2q+1}-1&=&\frac {(2d\ga+2-2\ga)q+1-\ga}{\ga(2q+1)}\no\\
&\geq& \frac 1{\ga(2q+1)}\Big((2d\ga+2-2\ga)\cdot \frac{1+\frac{2d}{\ga}}{2(d-1)}+1-\ga\Big)\no\\
&=& \frac d{\ga^2(d-1)(2q+1)}\Big((2d-1)\ga+2\Big)>0,\no
\eeqn where we used $d>1$ and $q\geq \frac{1+\frac{2d}{\ga}}{2(d-1)}$. Therefore, (\ref{eq:D10}) and also (\ref{eq:D5}) hold.
 Thus,  we can choose $\ka$ to be any number in $(A(p, q, \ga), B(p, q, \ga))$.

  (5). Note that if $\kappa\to \frac{2r-2}{1-\frac 2\ga}$ and $q\to\frac{r}{2(d-r)}$, we have \beqn
 \beta(q+\frac 12)=\Big(2+(1-\frac 2\ga)\kappa\Big)(q+\frac 12)&\to&\Big(2+(1-\frac 2\ga)\frac{2r-2}{1-\frac 2\ga}\Big)\frac {d}{2(d-r)}\no\\&=&\frac{dr}{d-r}=2p.\no
 \eeqn
The lemma is proved.

\end{proof}

\end{appendices}

    \vskip10pt
\noindent Haozhao Li, Institute of Geometry and Physics, and Key Laboratory of Wu Wen-Tsun
Mathematics, School of Mathematical Sciences, University of Science and Technology of China, No. 96 Jinzhai Road, Hefei, Anhui Province, 230026, China;  hzli@ustc.edu.cn.\\

\noindent    Linwei Zhang, School of Mathematical Sciences,
    University of Science and Technology of China, No. 96 Jinzhai Road, Hefei,
    Anhui Province, 230026, China;  zhanglinwei@mail.ustc.edu.cn.

\end{document}